\newtheorem{theorem}{Theorem}[section]
\theoremstyle{plain}
\newtheorem{corollary}{Corollary}[section]
\newtheorem{definition}{Definition}[section]
\newtheorem{lemma}{Lemma}[section]
\newtheorem{proposition}{Proposition}[section]
\numberwithin{equation}{section}
\newtheorem*{theorem*}{Theorem}
  \thmt@suspendcounter{\thmt@envname}{#1}%
\newcommand{\R}{\mathbb{R}}
\newcommand{\Sp}{\mathrm{Span}}
\newcommand{\Sec}{\mathrm{Sec}}
\newcommand{\is}{\mathfrak{i}}
\newcommand{\iso}{\mathcal{I}}
\begin{document}

\title[Isometries]{ON THE EXISTENCE AND PROLONGATION OF INFINITESIMAL ISOMETRIES ON SPECIAL SUB-RIEMANNIAN MANIFOLDS}

\author[M. Grochowski]{Marek Grochowski}
\address{Faculty of Mathematics and Natural Sciences\\ Cardinal Stefan Wyszy\'nski University in Warsaw\\
ul. Dewajtis 5, 01-815 Warszawa, Poland}
\keywords{contact distributions, connections, sub-Riemannian geometry, isometries}
\email{m.grochowski@uksw.edu.pl}
\date{\today}

\begin{abstract}
In the present paper we deal with (local) infinitesimal isometries of special sub-Riemannian manifolds (a contact and oriented sub-Riemannian manifold is called special if the Reeb vector field is an isometry).
The objective of the paper is to find some conditions on  such manifolds which allow one to construct, locally around a given point, infinitesimal isometries and then, possibly, to prolong them onto bigger domains. The mentioned conditions are related to the so-called $\is^*$-regular and $\is$-regular points, the notions introduced by Nomizu in \cite{Nom} in the Riemannian setting and slightly modified by the author.
\end{abstract}

\maketitle

\section{Introduction and statement of results}

A sub-Riemannian structure on a smooth (i.e., $C^{\infty}$) manifold $M$ is a smooth distribution $H\subset TM$ endowed with a smooth Riemannian metric $g$ on $H$. It is assumed that $H$ has constant rank and is bracket generating. This last condition means that the vector fields tangent to $H$ together with their Lie brackets of any order span the whole tangent bundle. The triple $(M,H,g)$ is called a \textit{sub-Riemannian manifold}. In the sequel we will deal with a particular case of sub-Riemannian manifolds, namely we will assume that $\dim M = 2n+1$ and the distribution $H$ is contact. In such a case we speak about contact sub-Riemannian manifolds. Sub-Riemannian geometry has been intensively studied for many years (see e.g. \cite{Agr,AgrIsomp,AgrBar,BoscAgr,AMS,Bell, POP, GroMaVas,LuiSuss} and the reference sections therein) and various aspects of the theory have been investigated, like for instance properties of the distance function, geodesics, conjugate and cut loci, nilpotent approximations, existence and properties of isometries, smooth volumes canonically associated with sub-Riemannian structures and many others.

The main object of interest of the present paper are infinitesimal isometries. Fix a sub-Riemannian manifold $(M,H,g)$. A diffeomorphism $\varphi:M\longrightarrow M$ is an isometry if $d\varphi(H)\subset H$ and $g(d_q\varphi(v),d_q\varphi(w)) = g(v,w)$ for every $q\in M$, $v,w\in H_q$. A vector field $Z$ is called an infinitesimal isometry if its (local) flow is composed of isometries. It is proved that the group of isometries is a Lie group whose maximal dimension in a contact case is $(n+1)^2$ (where $\dim M = 2n+1$). Sub-Riemannian isometries are treated for instance in \cite{Strich, leDonne,GrKrESAIM, GroWar, GroAnn}.

We will investigate isometries of \textit{special} sub-Riemannian manifolds. Let us explain what this latter notion means. Suppose that $(M,H,g)$ is a contact sub-Riemannian manifold which is oriented as a contact manifold. Under such an assumption the Reeb vector field (denoted by $\xi$) associated to the structure $(H,g)$ is defined globally on $M$ -- see the definitions below. We say that $(M,H,g)$ is special if $\xi$ is an infinitesimal isometry. The objective of the paper is to find some conditions on special sub-Riemannian manifolds which allow one to construct, locally around a given point, infinitesimal isometries, and then to prolong them onto bigger domains. Take a point $q\in M$. By $\mathfrak{i}^*(q)$ we will denote the Lie algebra of germs at $q$ of local infinitesimal isometries of $(M,H,g)$. A point $q$ is called $\is^*$-regular if the function $p\longrightarrow\dim\is^*(p)$ is constant on a neighborhood of $q$. The set of $\is^*$-regular points is denoted by $M^*$. It is proved  that $M^*$ is open and dense in $M$. Remark that in the paper \cite{GroAnn} all possible dimensions of $\mathfrak{i}^*(q)$, where $q\in M^*$, were computed in case $\dim M = 3$. If $Z$ is a vector field then $(Z)_q$ will stand for its germ at a point $q$. We will prove that if $q\in M^*$ has a simply connected neighborhood $U\subset M^*$ then each $Z^* \in\is^*(q)$ has a unique extension to an element of $\iso(U)$, where the latter denotes the Lie algebra of infinitesimal isometries defined on $U$. More precisely we will prove

\begin{theorem}[tag = 1] \label{th1}
Suppose that $(M,H,g)$ is a simply connected special sub-Riemannian manifold such that every point of $M$ is $\is^*$-regular. Then for every point $q\in M$ and every $Z^*\in\is^*(q)$ there exists a unique infinitesimal isometry $Z$ such that $(Z)_q = Z^*$. In particular, there exists a Lie algebra isomorphism $\iso(M)\longrightarrow\is^*(q)$ for every $q\in M$.
\end{theorem}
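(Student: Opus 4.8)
The plan is to prove \thmref{th1} by continuation of germs along paths, in the spirit of Nomizu, using simple connectivity to suppress the monodromy. The building block I would rely on is a genuinely \emph{local} prolongation statement, valid because every point is $\is^*$-regular: around each $p\in M$ there is a connected, simply connected (e.g.\ ball-like) neighborhood $V_p$ on which every germ of $\is^*(p)$ extends to an honest element of $\iso(V_p)$, and for which the germ map $\iso(V_p)\longrightarrow\is^*(p')$ is a Lie algebra isomorphism for \emph{every} $p'\in V_p$. This local case is where $\is^*$-regularity does the essential work, so I take it as established. Two properties will then be used throughout. First, since $M=M^*$, the dimension $d=\dim\is^*(p)$ is constant, which is exactly what forces these germ maps to be isomorphisms rather than mere injections. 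Second, a local infinitesimal isometry on a connected open set is determined by its germ at a single point; this rigidity is the sub-Riemannian counterpart of the fact that a Killing field is fixed by its value and first covariant derivative at one point, and I would obtain it from the closed first-order system satisfied by infinitesimal isometries relative to the canonical connection.

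First I would define the continuation. Fix $q$ and $Z^*\in\is^*(q)$, and let $p\in M$. Choose a path $\gamma\colon[0,1]\to M$ with $\gamma(0)=q$, $\gamma(1)=p$, and a subdivision $0=t_0<\dots<t_N=1$ together with neighborhoods $V_0,\dots,V_{N}$ as above so that $\gamma([t_i,t_{i+1}])\subset V_i$ and $V_i\cap V_{i+1}\neq\varnothing$. Extending $Z^*$ over $V_0$, passing to its germ at a point of $V_0\cap V_1$, re-extending over $V_1$ through the isomorphism, and iterating, produces a germ $\gamma_*Z^*\in\is^*(p)$. A routine refinement argument, using rigidity on the overlaps, shows that $\gamma_*Z^*$ is independent of the subdivision and of the chosen neighborhoods, hence depends only on $\gamma$.

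The heart of the argument, and the step I expect to be the main obstacle, is to show that $\gamma_*Z^*$ depends only on the homotopy class of $\gamma$ rel endpoints. Given a homotopy between two such paths, I would subdivide the unit square into cells each of whose image lies in a single neighborhood $V$; on each cell the continuations along the two boundary routes coincide because the local extension there is unique, and composing these equalities across the grid yields equality of the continuations along the two paths. Because $M$ is simply connected, any two paths from $q$ to $p$ are homotopic rel endpoints, so $\gamma_*Z^*$ depends on $p$ alone; write it $Z^*_p$. Thus we obtain a coherent family of germs $(Z^*_p)_{p\in M}$.

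Finally I would glue. For each $p$ choose $Z^{(p)}\in\iso(V_p)$ with germ $Z^*_p$. On any connected component of an overlap $V_p\cap V_{p'}$ the fields $Z^{(p)}$ and $Z^{(p')}$ share, by single-valuedness of the continuation, the same germ at some point, hence coincide there by rigidity. Therefore the $Z^{(p)}$ agree on overlaps and glue to a smooth vector field $Z$ on $M$; since being an infinitesimal isometry is a local condition, $Z\in\iso(M)$, and $(Z)_q=Z^*$ by construction. Uniqueness of $Z$ is immediate: two global infinitesimal isometries with germ $Z^*$ at $q$ agree by rigidity on the connected manifold $M$. For the last assertion, the germ map $\iso(M)\longrightarrow\is^*(q)$, $Z\mapsto(Z)_q$, is a Lie algebra homomorphism because the germ of a bracket is the bracket of germs; it is injective by uniqueness and surjective by the construction above, hence the desired isomorphism.
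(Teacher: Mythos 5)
Your proof is correct, and its skeleton---continuation of germs along paths, path-independence via simple connectivity, then gluing---is the same Nomizu-style strategy the paper follows; the genuine difference lies in the continuation mechanism. The paper defines the prolongation of a germ along a smooth curve $\gamma$ analytically: it is a family of germs $Z^*(t)\in\is^*(\gamma(t))$ whose associated data $(X(t),A(t),c(t))\in\mathfrak{a}(\gamma(t))$ solves the first-order system (\ref{EqDefProl}); uniqueness of prolongation is then ODE uniqueness, and existence along curves of $\is^*$-regular points is a separate proposition proved by a supremum argument through $\iso$-special neighborhoods. You instead define continuation combinatorially, by chaining the local extensions furnished by Proposition \ref{prop01} across overlapping $\iso$-special neighborhoods, with well-definedness coming from rigidity (Proposition \ref{PropIsom}(\ref{a4}), or equivalently from the determination of a local isometry by its $1$-jet at a point); in effect you prove the monodromy theorem for the locally constant sheaf $U\mapsto\iso(U)$, whose local constancy is exactly what $\is^*$-regularity plus connectedness of $M$ provide. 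Your route is more elementary: beyond rigidity it uses none of the $A_Z$/curvature machinery of subsection \ref{subSecAZ}, and it continues germs along merely continuous paths, so the homotopy-square/Lebesgue-number argument requires no smoothness of the intermediate curves, a point the paper's ODE-based deformation argument passes over tacitly. What the paper's formulation buys in exchange is that the prolonged germ is exhibited as the solution of an explicit Cauchy problem for $(X,A,c)$, and precisely this formulation (equations (\ref{rowXAc})) is reused in Section \ref{SecAnalytic} to prove Theorem \ref{th2}, where one must start from algebraic data $(X,A,c)\in\is(q)$ rather than from a germ; so the paper's definition lets Theorems \ref{th1} and \ref{th2} run on the same engine, while yours isolates Theorem \ref{th1} as a purely sheaf-theoretic statement.
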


Next we introduce the notion of $\is$-regular points (the idea of $\is^*$-regular and $\is$-regular points is taken, after a suitable adaptation, from \cite{Nom}). For a point $q\in M$ let $\mathfrak{a}(q) = H_q\oplus E_q\oplus\R$, where $E_q$ is the set of endomorphism $H_q\longrightarrow H_q$ which are skew-symmetric with respect to $g$. 
Let us denote by $\is(q)$ the set that consists of all $(X,A,c)\in\mathfrak{a}(q)$ which satisfy $(\nabla_{X+c\xi(q)} + A)(\nabla^iR) = 0$ and $(\nabla_{X+c\xi(q)} + A)(\nabla^id\alpha) = 0$ for $i=0,1,\dots$ Here $\nabla$ is the canonical sub-Riemannian connection, $R$ is the curvature tensor of $\nabla$ and $\alpha$ is the normalized contact form -- see Sections \ref{SecReebIsm}, \ref{sRCON} below. 
A point $q$ is said to be $\is$-regular if the function $p\longrightarrow\dim\is(p)$ is constant in a neighborhood of $q$. The set of such points will be denoted by $M^o$. We prove that $M^o$ is open and dense in $M$. It is clear that local behavior of infinitesimal isometries, their existence and possibility of their extension are determined by properties of the sets $\is^*(q)$ and $\is(q)$. Note that we have a natural injection $\is^*(q)\ni Z^*\longrightarrow (PZ(q),A_Z(q),f_Z(q))\in\is(q)$, where $Z$ is a representative of $Z^*$ and $Z = PZ + f_Z\xi$, $PZ$ being the horizontal part of $Z$ -- see Section \ref{SecAnalytic} for details.
Further we prove that starting from any $(X,A,c)\in\is(q)$, $q\in M^o$, we can build a unique infinitesimal isometry $Z$ defined in a neighborhood of $q$ such that $Z(q) = X + c\xi(q)$. More exactly we obtain 
\begin{theorem}[tag = 2]\label{th2} 
 Suppose that $q\in M$ is an $\is$-regular point. Then the injection $\is^*(q_0)\longrightarrow\is(q_0)$ is onto.
\end{theorem}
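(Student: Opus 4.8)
The plan is to realize Theorem~\ref{th2} as an integration result for a first-order linear transport system, in the spirit of Nomizu's proof in the Riemannian case. The point is that an infinitesimal isometry $Z$ is completely encoded by its $1$-jet at a point: writing $Z = PZ + f_Z\xi$, the triple $\sigma_Z = (PZ, A_Z, f_Z)$ is a section of the vector bundle $\mathcal{A}$ whose fibre over $p$ is $\mathfrak{a}(p) = H_p\oplus E_p\oplus\R$, and the isometry condition forces $\sigma_Z$ to satisfy a closed first-order system of the schematic form $\nabla_Y(PZ) = (\text{algebraic in }A_Z,f_Z,Y)$, $\nabla_Y A_Z = (\text{algebraic in }R,PZ,f_Z,Y)$, $Y(f_Z) = (\text{algebraic in }PZ,A_Z,Y)$. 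I would first record this system precisely in terms of the sub-Riemannian connection $\nabla$, the curvature $R$ and the normalized contact form $\alpha$; it defines a linear connection $D$ on $\mathcal{A}$ for which $Z$ is an infinitesimal isometry exactly when $\sigma_Z$ is $D$-parallel, and for which the injection of the excerpt sends $Z^*$ to the value $\sigma_Z(q)$. Thus $\is^*(q)\longrightarrow\is(q)$ is precisely ``restriction to the value at $q$'' of $D$-parallel sections lying in $\is$, and surjectivity amounts to producing, for each prescribed value in $\is(q)$, a local $D$-parallel section taking that value.

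The second step is to exploit $\is$-regularity. Since $q\in M^o$, the function $p\mapsto\dim\is(p)$ is constant near $q$; as $\is(p)$ is the common kernel of the smoothly varying linear maps $(X,A,c)\mapsto(\nabla_{X+c\xi}+A)(\nabla^iR)$ and $(X,A,c)\mapsto(\nabla_{X+c\xi}+A)(\nabla^id\alpha)$, $i\ge 0$, constancy of the kernel dimension shows that $\mathcal{S} := \bigsqcup_p\is(p)$ is a smooth vector subbundle of $\mathcal{A}$ on a neighborhood of $q$. I would then prove that $\mathcal{S}$ is invariant under $D$, i.e.\ $D_Y s$ is again a section of $\mathcal{S}$ whenever $s$ is. This is the decisive use of the fact that the definition of $\is(p)$ imposes the conditions at all orders $i=0,1,\dots$ at once: differentiating the $i$-th condition along $Y$ and substituting the transport equations for $D$ expresses the outcome through the $(i+1)$-th condition together with lower-order ones, all of which vanish on $\mathcal{S}$. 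Hence $D$ restricts to a connection $D^{\mathcal{S}}$ on $\mathcal{S}$.

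The third step is to show that $D^{\mathcal{S}}$ is flat near $q$, so that local $D^{\mathcal{S}}$-parallel sections exist through every prescribed initial value. The curvature $R^{D}$ of the transport connection, evaluated on a section $\sigma = (X,A,c)$, can be written entirely in terms of the obstruction tensors $(\nabla_{X+c\xi}+A)R$ and $(\nabla_{X+c\xi}+A)d\alpha$ (and their first covariant derivatives); on $\mathcal{S}$ these vanish by the very definition of $\is$, whence $R^{D}|_{\mathcal{S}} = R^{D^{\mathcal{S}}} = 0$. Flatness then yields, for each $(X,A,c)\in\is(q)$, a $D^{\mathcal{S}}$-parallel section $\sigma = (X(\cdot),A(\cdot),c(\cdot))$ on a (small, hence simply connected) neighborhood of $q$ with $\sigma(q) = (X,A,c)$. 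Setting $Z := X + c\,\xi$, the parallelism of $\sigma$ makes $Z$ an infinitesimal isometry (it encodes $L_Z\nabla = 0$ and $L_Z g = 0$ on $H$, while membership in $\mathcal{S}$ gives $L_ZR = 0$ and $L_Z\alpha = 0$), and by construction its germ $Z^*$ satisfies $(PZ(q),A_Z(q),f_Z(q)) = (X,A,c)$. This exhibits the required preimage, so the injection is onto.

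I expect the main obstacle to be the third step: the explicit computation identifying the curvature $R^{D}$ of the transport connection with the $\is$-defining obstruction tensors, so that it provably annihilates $\mathcal{S}$. Compared with the Riemannian Killing transport, two features complicate the bookkeeping here: the presence of the Reeb direction and of the scalar component $f_Z$, which couple the three summands of $\mathfrak{a}$, and the fact that the sub-Riemannian connection $\nabla$ carries torsion tied to $d\alpha$, so that the contact form and its covariant derivatives must be transported on an equal footing with $R$. Verifying that every term arising in $R^{D}$, and in the $D$-invariance computation of Step~2, is accounted for by one of the conditions $(\nabla_{X+c\xi}+A)(\nabla^iR)=0$, $(\nabla_{X+c\xi}+A)(\nabla^id\alpha)=0$ is where the real work lies; the $\is$-regularity hypothesis is precisely what promotes these pointwise identities to a subbundle $\mathcal{S}$ on which the flatness statement, and hence the local integration, makes sense.
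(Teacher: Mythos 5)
Your plan is essentially correct, but it reaches Theorem~\ref{th2} by a genuinely different route through the middle of the argument than the paper does. The two endpoints coincide: your transport system is the paper's system (\ref{rowXAc})/(\ref{EqsY}), which by Proposition~\ref{propEqs} holds along arbitrary (not only horizontal) curves, so it does define a linear connection $D$ on the bundle with fibres $\mathfrak{a}(p)$; and your final step, that a $D$-parallel section $(X,A,c)$ of this bundle yields the infinitesimal isometry $Z=X+c\xi$, is exactly Proposition~\ref{FinalStep} --- note this converse is not automatic from slogans like $\mathcal{L}_Z\nabla=0$: it needs skew-symmetry of $A$, the third equation together with Proposition~\ref{ContCond} to get the contact property, and specialness of the manifold. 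Where you diverge is in producing the parallel section. The paper never forms $D$ or its curvature: it builds the section explicitly, solving the ODEs along the distinguished radial sub-Riemannian geodesics $\gamma_q$ emanating from the Reeb trajectory $\delta$ (in the coordinates of \cite{GroSinger}), transporting along $\delta$ itself by the flow of $\xi$ (an isometry, by specialness), and then proving full horizontal parallelism via Lemmas~\ref{LemEq1}--\ref{LemEq3}: the defects $\nabla_YX+AY$, $R(X,Y)-\nabla_YA$, $\nabla_Yc+d\alpha(X,Y)$ obey a closed linear ODE system along each $\gamma_q$ with vanishing initial data (vanishing because every horizontal vector based on $\delta$ is tangent to some radial curve), hence vanish. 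Your route --- regularity makes $\mathcal{S}=\bigsqcup_p\is(p)$ a smooth subbundle, $D$ preserves $\mathcal{S}$, and $D$ restricted to $\mathcal{S}$ is flat, so Frobenius gives parallel sections through any prescribed value --- is viable with exactly the tools the paper provides, and your Step 2 plays the role of the paper's unproved opening lemma (cf.\ \cite[Lemma 12]{Nom}). Three points deserve care in a full write-up: (i) the subbundle claim requires the stabilization Lemma~\ref{lem3} (an intersection of infinitely many kernels of locally constant total dimension is not a subbundle without reducing to finitely many conditions, so do not elide this); (ii) the invariance claim is the paper's Lemma~\ref{lem4} upgraded to non-parallel sections, i.e.\ writing $G_i[s]$ for the obstruction tensors $(\nabla_{X+c\xi}+A)(\nabla^iR)$, $(\nabla_{X+c\xi}+A)(\nabla^id\alpha)$ of a section $s=(X,A,c)$, one needs $\nabla_Y(G_i[s])=G_i[D_Ys]+G_{i+1}[s](Y;\cdot)$, and this is where all orders $i$ enter; (iii) the curvature computation does close up, using the horizontal Bianchi identities (Proposition~\ref{Bianchi}), $R(\xi,\cdot)=0$ (Proposition~\ref{prop2}), and $\nabla_\xi=\mathcal{L}_\xi$ on horizontal tensors together with Proposition~\ref{LieR}: one finds $R^D(\xi,\cdot)=0$ and, for horizontal $V,W$, $R^D(V,W)(X,A,c)=\bigl(0,\mp((\nabla_{X+c\xi}+A)R)(V,W),\pm((\nabla_{X+c\xi}+A)d\alpha)(V,W)\bigr)$, so in fact only the order-zero obstructions appear (your hedge about their first covariant derivatives is unnecessary) and they vanish on $\mathcal{S}$. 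The trade-off: your argument is more conceptual and dispenses with the normal-form coordinates, the geodesic uniqueness result of \cite{GroSinger}, and the Reeb-equivariance bookkeeping, at the price of the two tensor computations in (ii) and (iii); the paper's argument avoids computing $R^D$ altogether, at the price of needing the special family of curves with their vanishing initial conditions.
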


Theorem \ref{th2} is perhaps the most important result of the paper.

In the analytic case we prove 
\begin{theorem}[tag = 3]\label{th3}
 Suppose that $(M,H,g)$ is an analytic special sub-Riemannian manifold. Then every point $q\in M$ is $\is$-regular, i.e., $M = M^o$.
\end{theorem}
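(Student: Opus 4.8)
The plan is to prove that the integer-valued function $d(p):=\dim\is(p)$ is locally constant near every point, since $\is$-regularity of a point $q_0$ means precisely that $d$ is constant on a neighborhood of $q_0$. One inequality is available in general and in fact underlies the openness and density of $M^o$: after trivializing the bundle $\mathfrak a\to M$ with fibre $\mathfrak a(p)=H_p\oplus E_p\oplus\R$ and the relevant tensor bundles by an analytic local frame, the subspace $\is(p)$ is the common kernel of the linear maps sending $(X,A,c)$ to $(\nabla_{X+c\xi}+A)(\nabla^i R)$ and to $(\nabla_{X+c\xi}+A)(\nabla^i d\alpha)$, whose coefficients depend analytically on $p$. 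Since the rank of an analytically (indeed continuously) varying family of linear maps is lower semicontinuous, $d(p)=\dim\mathfrak a(p)-\operatorname{rank}$ is upper semicontinuous. Thus it remains to prove the reverse, lower semicontinuity, in the analytic case; this is where analyticity is indispensable, for a general analytic family the rank may jump up at isolated points, so local constancy is a genuine extra fact that must come from the \emph{differential} structure of the defining system rather than from mere analyticity of the coefficients.

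The mechanism I would use is the prolongation of Section~\ref{SecAnalytic}: the infinite first-order system defining $\is$ can be written as parallelism for a natural analytic linear connection $D$ on the bundle $\mathfrak a$, arranged so that, by construction, a vector $e_0\in\mathfrak a(p)$ lies in $\is(p)$ if and only if it is annihilated by the curvature $R^{D}$ of $D$ together with all its iterated covariant derivatives at $p$. The heart of the proof is then an analytic existence statement, adapted from \cite{Nom}: for the analytic connection $D$ there exists a germ of $D$-parallel section $s$ at $q_0$ with $s(q_0)=e_0$ if and only if $e_0\in\is(q_0)$. The "only if" direction is immediate, since a parallel section is annihilated by $R^{D}$ and all its covariant derivatives at every point of its domain. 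The "if" direction is the main obstacle: one propagates $e_0$ by $D$-parallel transport along paths issuing from $q_0$, and must then verify that the resulting section is parallel in \emph{all} directions. The obstruction to this is an analytic tensor assembled from $R^{D}$ and its derivatives which vanishes to infinite order at $q_0$ by the conditions defining $\is(q_0)$, hence vanishes identically near $q_0$ by analyticity; controlling this vanishing, together with the bookkeeping that translates the intrinsic operators $\nabla_{X+c\xi}+A$ acting on $R$ and $d\alpha$ into conditions on $R^{D}$, is the delicate part I expect to absorb most of the work.

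Granting the existence statement, lower semicontinuity follows cleanly. Fix $q_0$, set $d_0=d(q_0)$, and choose a basis $e_0^{1},\dots,e_0^{d_0}$ of $\is(q_0)$. By the existence statement each $e_0^{k}$ extends to a $D$-parallel section $s^{k}$ on a common connected neighborhood $U$ of $q_0$. Parallel sections that are linearly independent at one point stay independent at every point of $U$, since $D$-parallel transport along any path is a linear isomorphism; hence $s^{1}(p),\dots,s^{d_0}(p)$ are independent for each $p\in U$. On the other hand, each $s^{k}$ is $D$-parallel near $p$, so by the easy direction applied at $p$ its value satisfies $s^{k}(p)\in\is(p)$. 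Therefore $\dim\is(p)\ge d_0=\dim\is(q_0)$ for all $p\in U$, which is the desired lower semicontinuity. Combined with the upper semicontinuity of the first paragraph, $d$ is locally constant at $q_0$; as $q_0$ was arbitrary, every point is $\is$-regular and $M=M^o$.
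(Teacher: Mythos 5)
Your architecture is genuinely Nomizu-style, like the paper's, and your final deduction (parallel transport preserves linear independence, plus the ``easy direction'' at each nearby point, gives lower semicontinuity; combine with upper semicontinuity) would work \emph{if} your central claim were available. Note first a minor gloss: since $\is(p)$ is cut out by infinitely many conditions, upper semicontinuity of $\dim\is(p)$ requires truncating at a level $m$ with $\is_m(q_0)=\is(q_0)$ (the stabilization argument of Lemma \ref{lem3}) before invoking rank semicontinuity of the finite system. The genuine gap is at the center: you assert that the prolonged connection $D$ on $\mathfrak a$ can be ``arranged so that, by construction,'' $\is(p)$ is exactly the set of vectors annihilated by $R^{D}$ and all its iterated covariant derivatives at $p$. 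This is not a construction; it is the substantive theorem. Even in the Riemannian model one needs both Bianchi identities to show that the $H$-component of $R^{D}(Y_1,Y_2)(X,A)$ vanishes and that its $E$-component reduces to $\pm\bigl((\nabla_X+A)R\bigr)(Y_1,Y_2)$, and then an induction to match $\nabla^{k}R^{D}$ against the conditions $(\nabla_X+A)(\nabla^{k}R)=0$. In the present setting the difficulties are strictly worse: the Bianchi identities of Proposition \ref{Bianchi} make sense only for horizontal arguments, $\nabla_\xi$ acts as $\mathcal{L}_\xi$ and $R(\xi,\cdot)=0$, the conditions on $\nabla^{i}d\alpha$ must also be shown to be encoded in $R^{D}$, and if $D$ is a full connection (as your holonomy-type existence theorem requires) its mixed curvature components involving the $\xi$-direction must be analyzed too. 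You flag this ``bookkeeping'' as the delicate part, but it is precisely where the proof lives, and it is left undone.

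For comparison, the paper bypasses the holonomy reformulation entirely. It transports $(X,A,c)$ along \emph{analytic horizontal} curves by the linear system (\ref{rowXAc}) and proves directly the identity $\nabla_{\dot{\gamma}}(DS)=(D(\nabla S))(\dot{\gamma};)$ for $S=\nabla^{i}R$ and $S=\nabla^{i}d\alpha$ (Lemma \ref{lem4}, prepared by the extension Lemma \ref{lem5}, which is where the horizontal-only Bianchi identities and the relations $R(\xi,\cdot)=0$, $\nabla_\xi=\mathcal{L}_\xi$ are actually confronted). Consequently every $t$-derivative of the analytic curve $t\mapsto D(t)S$ vanishes at $t=0$ when the initial triple lies in $\is(\gamma(0))$, so $D(t)S\equiv 0$ by analyticity and the transported triple stays in $\is(\gamma(t))$. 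Since the transport is a linear isomorphism (run the curve backwards), $\dim\is$ is constant on a coordinate neighborhood of $q_0$ — no two-sided semicontinuity argument is needed. Your route could in principle be completed, since the holonomy reformulation is a known alternative in the Riemannian case, but as written it asserts the crux rather than proving it; filling it would require exactly the kind of computations the paper carries out in Lemmas \ref{lem5} and \ref{lem4}.
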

We should note here that when $M$ as well as $H$ and $g$ are supposed to be analytic then it can be proved that the canonical sub-Riemannian connection is also analytic.

Combining the above results one immediately obtains

\begin{theorem}[tag = 4]\label{th4}
 Suppose that $(M,H,g)$ is an analytic special sub-Riemannian manifold. Then $M = M^o = M^*$. In particular every point $q\in M$ is $\is^*$-regular.
\end{theorem}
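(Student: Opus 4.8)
The plan is to obtain Theorem~\ref{th4} as a direct consequence of Theorems~\ref{th2} and~\ref{th3}, so that all of the genuine work has already been carried out and only a short bookkeeping argument remains. First I would invoke Theorem~\ref{th3}: since $(M,H,g)$ is analytic and special, every point of $M$ is $\is$-regular, that is, $M = M^o$. This already establishes one of the two equalities and, what is more useful, tells us that the function $p\longmapsto\dim\is(p)$ is constant on a neighborhood of each point of $M$.

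Next I would fix an arbitrary point $q\in M$. Because $q\in M = M^o$ is $\is$-regular, Theorem~\ref{th2} applies and shows that the natural map
\[
\is^*(q)\ni Z^*\longmapsto (PZ(q),A_Z(q),f_Z(q))\in\is(q)
\]
is onto. Since this map is, by its very construction, an injective linear map between the vector spaces $\is^*(q)$ and $\is(q)$, surjectivity upgrades it to a linear isomorphism $\is^*(q)\cong\is(q)$. In particular $\dim\is^*(q) = \dim\is(q)$, and since $q$ was arbitrary this equality of dimensions holds at every point of $M$.

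It remains to combine the two observations. The function $p\longmapsto\dim\is(p)$ is locally constant by the first step, while $\dim\is^*(p) = \dim\is(p)$ for all $p$ by the second; hence $p\longmapsto\dim\is^*(p)$ is locally constant as well. By definition this means that every point of $M$ is $\is^*$-regular, i.e. $M = M^*$. Together with the equality $M = M^o$ already recorded, this yields $M = M^o = M^*$, as asserted.

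I do not anticipate any real obstacle at this stage: the whole difficulty has been absorbed into Theorems~\ref{th2} and~\ref{th3}, and the present statement is essentially their formal corollary. The only point deserving a little care is the passage from the pointwise surjectivity furnished by Theorem~\ref{th2} to the local constancy of $p\longmapsto\dim\is^*(p)$. This is legitimate precisely because the natural map $\is^*\to\is$ is linear and injective, so that its surjectivity at a point is equivalent to equality of the two dimensions there, and that equality can then be transported using the local constancy of $\dim\is$ supplied by analyticity.
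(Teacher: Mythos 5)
Your proposal is correct and follows exactly the route the paper intends: the paper states Theorem~\ref{th4} as an immediate consequence of Theorems~\ref{th2} and~\ref{th3}, and your argument simply makes explicit the bookkeeping step (equality $\dim\is^*(q)=\dim\is(q)$ from surjectivity of the injection, then transport of local constancy from $\dim\is$ to $\dim\is^*$) that the paper leaves to the reader.
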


Note that if $M$ is connected the statement that every point in $M$ is $\is^*$-regular is equivalent to the claim that the function $M\ni q\longrightarrow\dim\is^*(q)$ is constant.
Now, as a conclusion we can make the following observation: 
\begin{corollary}\label{corIntr}
 If $(M,H,g)$ is an analytic simply connected special sub-Riemannian manifold then for every point $q\in M$ and every germ $Z^*\in\is^*(q)$ there exists a unique infinitesimal isometry $Z\in\iso(M)$ such that $(Z)_q = Z^*$.
 \end{corollary}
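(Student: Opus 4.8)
The plan is to derive the corollary as a direct consequence of Theorem~\ref{th4} and Theorem~\ref{th1}: the former upgrades analyticity and specialness to $\is^*$-regularity at every point, and the latter then extends germs to global infinitesimal isometries. Since both theorems are already available, the task reduces to checking that the conclusion of the first is precisely the hypothesis of the second.

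Concretely, I would first apply Theorem~\ref{th4}, whose hypotheses (analytic and special) are satisfied by assumption; it yields $M = M^o = M^*$, so in particular every point $q\in M$ is $\is^*$-regular, i.e. $p\longrightarrow\dim\is^*(p)$ is locally constant on all of $M$. I would then invoke Theorem~\ref{th1}: the manifold is simply connected and special by hypothesis, and by the previous step every point is $\is^*$-regular, which are exactly its standing assumptions. Thus for each $q\in M$ and each germ $Z^*\in\is^*(q)$ there is a unique infinitesimal isometry $Z$ with $(Z)_q = Z^*$. The one detail to record is that this $Z$ is globally defined: $M$ itself plays the role of the simply connected neighborhood $U\subset M^*$ of $q$ in Theorem~\ref{th1}, so the extension belongs to $\iso(U) = \iso(M)$, whence $Z\in\iso(M)$, and its uniqueness is inherited directly.

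Because all the substantive work --- the analytic prolongation producing $\is$-regularity in Theorems~\ref{th3}--\ref{th4}, and the monodromy-free extension of germs across a simply connected domain in Theorem~\ref{th1} --- has been carried out in those results, I expect no genuine obstacle at this stage. The only care needed is the bookkeeping above: matching the ``$\is^*$-regular everywhere'' output of Theorem~\ref{th4} to the hypothesis of Theorem~\ref{th1}, and obtaining global definedness by taking $U = M$.
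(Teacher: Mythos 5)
Your proposal is correct and follows essentially the same route as the paper: the corollary is obtained immediately by combining Theorem~\ref{th4} (analyticity and specialness give $M = M^o = M^*$, so every point is $\is^*$-regular) with Theorem~\ref{th1} applied to the simply connected manifold $M$ itself, yielding the unique global extension in $\iso(M)$. The paper additionally remarks that the result can also be deduced from Nomizu's Riemannian theorem via the extended metric $\tilde{g}$, but that is offered as an alternative, not as its primary argument.
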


It should be remarked that this last result can also be obtained as a consequence of \cite[Theorem 1]{Nom} treating the Riemannian case.\smallskip

The methods used in the proofs of the above theorems are modelled on the paper \cite{Nom} by Nomizu, where similar results in the Riemannian case were proven. Our main tool is a canonical sub-Riemannian connection introduced in \cite{GroConn,GroConnCor} (see Section \ref{sRCON} for more details). It turns out that such a connection perfectly fits to the type of problems considered in \cite{Nom}, and makes it possible to perform computations similar to those done in Riemannian geometry by use of the Levi-Civita connection. What we have to do is to adapt some notions and definition to the sub-Riemannian setting and prove several preparatory and auxiliary results. Some of them look similarly as their Riemannian counterparts, some are different.

\bigskip
\noindent \textbf{Content of the paper.}
In Section \ref{SecReebIsm} we state basis properties of the Reeb vector field and infinitesimal isometries. In Section \ref{sRCON} we recall the construction of a canonical torsion-free sub-Riemannian connection that was introduced in \cite{GroConn,GroConnCor}. Such a connection exists on special sub-Riemannian manifolds. Some results, especially concerning the curvature, are new in comparison with \cite{GroConn,GroConnCor}. At the beginning of Section \ref{SecProl} we associate to every infinitesimal isometry a certain skew-symmetric operator, analogously as it is done in the Riemannian case. Further, using it,  we describe the prolongation procedure of infinitesimal isometries along (piecewise) smooth curves. We show that such a prolongation always exists along curves which entirely consist of $\is^*$-regular points and the prolongation is unique. As a corollary of these considerations we prove Theorem \ref{th1}. In Section \ref{SecAnalytic} we introduce the notion of $\is$-regular points. First we show that under the assumption of analycity of $(M,H,g)$ every point in $M$ is $\is$-regular which proves Theorem \ref{th3}. Next we show how to extend each element of $\is(q)$, $q\in M^o$, to an infinitesimal isometry defined around $q$ thus proving Theorem \ref{th2}. Having proved the above results, Theorem \ref{th4} is a simple consequence of thereof.

\bigskip
\noindent \textbf{Convention and notation.}
In the paper all manifolds, vector fields etc. are supposed to be smooth, i.e., of class $C^{\infty}$, unless analycity is assumed. If $E\longrightarrow M$ is a vector bundle and $U\subset M$ is an open subset, then by $\Sec(U,E)$ we denote the $C^{\infty}(U)$-module of sections of $E$ defined on $U$. Also we write $\Sec(E)$ for $\Sec(M,E)$. If $(M,H,g)$ is a sub-Riemannian manifold then every vector field $X\in\Sec(H)$ will be referred to as a \textit{horizontal vector field}. A piecewise smooth curve $\gamma:[a,b]\longrightarrow M$ such that $\dot{\gamma}(t)\in H_{\gamma(t)}$ for almost every $t$ will be said to be a \textit{horizontal curve}.

\section{Reeb vector field and Infinitesimal isometries}\label{SecReebIsm}
Let $(M,H,g)$ be a smooth contact sub-Riemannian manifold of dimension $2n+1$. We will suppose it to be oriented as a contact manifold, i.e., the bundles $TM$ and $H$ are oriented. Equivalently we can express this by saying that there exists a globally defined contact $1$-form $\alpha$ such that $H = \ker\alpha$. We normalize $\alpha$ in the following way. Let $X_1,\dots,X_{2n}$ be a local positively oriented orthonormal basis of $H$. Then we assume 
\begin{align}\label{normaliz}
 \bigwedge\nolimits ^n d\alpha(X_1,\dots,X_{2n}) = 1\text{;}
\end{align}
note that the above equality does not depend on the choice of an orthonormal frame. If $n$ is even, there exist two such forms that differ by sign and we choose either of them. From now on we fix a contact form $\alpha$, $\ker\alpha = H$, which satisfies (\ref{normaliz}). Such $\alpha$ will be referred to as the \textit{normalized contact form}.    

Having fixed $\alpha$ we define the Reeb vector field $\xi$ as the solution to the system of equations: $d\alpha(\xi,\cdot)= 0$, $\alpha(\xi) = 1$. The field $\xi$ has many special properties which will be stated in detail in the proposition below.

A diffeomorphism $\varphi\colon M\longrightarrow M$ is called an \textit{isometry} of $(M,H,g)$ if (i) $d\varphi(H)\subset H$ and, moreover, (ii) $g(d_q\varphi(v),d_q\varphi(w)) = g(v,w)$ for every $q\in M$ and $v,w\in H_q$. It is clear that if $\varphi$ is an isometry then $\varphi^*\alpha = \pm\alpha$ as well as $\varphi_*\xi = \pm\xi$, depending on whether $\varphi$ preserves or reverses the given orientation. A mapping $\varphi\colon U\longrightarrow M$, where $U\subset M$ is an open subset, which a diffeomorphism onto its image $\varphi(U)$ and satisfies (i), (ii) for all $q\in U$ and $v,w\in H_q$ will be referred to as a \textit{local isometry}. Isometries map minimizing sub-Riemannian geodesics to minimizing sub-Riemannian geodesics and preserve the sub-Riemannian distance induced by the structure $(H,g)$. Note in this place that a mapping $M\longrightarrow M$ that preserves the sub-Riemannian distance is automatically smooth, as it is proved in \cite{leDonne}.

A vector field $Z\in\Sec(TM)$ is called a \textit{contact vector field} if $[Z,X]\in\Sec(U,H)$ for every open $U\subset M$ and $X\in\Sec(U,H)$. Equivalently we can say that $Z$ is contact if around any point its (local) flow $\varphi^t$ satisfies $d_q\varphi^t(H_q)\subset H_{\varphi^tq}$ for all $t$ and $q$ for which it makes sens. Below we will need
\begin{proposition}\label{ContCond}
 $Z\in\Sec(TM)$ is a contact vector field if and only if 
 \[
  d\alpha(Z,X) = -X(\alpha(Z))
 \]
 for every open subset $U\subset M$ and $X\in\Sec(U,H)$.
\end{proposition}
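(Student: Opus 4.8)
The plan is to reduce everything to Cartan's invariant formula for the exterior derivative of a $1$-form. For any $1$-form $\beta$ and any vector fields $Z,X$ one has
\[
 d\beta(Z,X) = Z(\beta(X)) - X(\beta(Z)) - \beta([Z,X]).
\]
Applying this to the normalized contact form $\alpha$ turns the purely Lie-theoretic contact condition into an algebraic identity involving $d\alpha$, and the proposition should fall out by matching terms.

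Concretely, first I would fix an arbitrary open $U\subset M$ and an arbitrary horizontal section $X\in\Sec(U,H)$. Since $H=\ker\alpha$, horizontality of $X$ means exactly $\alpha(X)\equiv 0$ on $U$, so the first term $Z(\alpha(X))$ in Cartan's formula vanishes identically. This reduces the formula to
\[
 d\alpha(Z,X) = -X(\alpha(Z)) - \alpha([Z,X]).
\]
Next I would invoke the definition: $Z$ is contact precisely when $[Z,X]\in\Sec(U,H)$ for every such $U$ and $X$, and again because $H=\ker\alpha$ this is equivalent to the scalar condition $\alpha([Z,X])=0$ for all horizontal $X$. Substituting into the reduced identity gives the two implications at once: if $Z$ is contact then $\alpha([Z,X])=0$ and we recover exactly $d\alpha(Z,X)=-X(\alpha(Z))$; conversely, if this formula holds for every horizontal $X$, comparison with the reduced identity forces $\alpha([Z,X])=0$, hence $[Z,X]$ is horizontal and $Z$ is contact.

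There is no real obstacle here—the statement is essentially a transcription of Cartan's formula under the kernel description $H=\ker\alpha$. The only points deserving a line of care are bookkeeping ones: the equivalence must be asserted for \emph{all} open $U$ and all local horizontal sections, but since the computation is pointwise and $\alpha$ is globally defined, passing to an arbitrary $U$ is harmless; and one should explicitly note that ``$[Z,X]$ horizontal $\iff \alpha([Z,X])=0$'' uses only that $H$ is the kernel of a single globally defined $1$-form, which is guaranteed by the oriented-contact setup fixed earlier in this section.
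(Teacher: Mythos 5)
Your proof is correct: the paper states this proposition without proof, and the one-line argument it implicitly relies on is exactly yours — apply Cartan's formula $d\alpha(Z,X)=Z(\alpha(X))-X(\alpha(Z))-\alpha([Z,X])$, kill the first term since $\alpha(X)\equiv 0$ for horizontal $X$, and observe that $[Z,X]\in\Sec(U,H)$ is equivalent to $\alpha([Z,X])=0$ because $H=\ker\alpha$ with $\alpha$ globally defined. Note only that this presumes the convention for $d$ without the factor $\tfrac12$, which is the convention the paper's statement itself forces.
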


A vector field $Z\in\Sec(TM)$ is called an \textit{infinitesimal isometry} or a \textit{Killing vector field} if its (local) flow is composed of (local) isometries. From the very definition it follows that
\begin{proposition}\label{InfIsmCon}
 $Z\in\Sec(TM)$ is an infinitesimal isometry if and only if $(i)$ $[Z,X]\in\Sec(U,H)$ and $(ii)$ $Z(g(X,Y)) = g([Z,X],Y) + g(X,[Z,Y])$ for every open $U\subset M$ and for all $X,Y\in\Sec(U,H)$. 
 \qed
\end{proposition}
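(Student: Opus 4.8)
The plan is to differentiate, at $t=0$, the two defining conditions of an isometry applied to the local flow $\varphi^t$ of $Z$, and conversely to integrate them back. Throughout, the essential point is that both $g$ and the isometry conditions live only on the subbundle $H$, so one must guarantee that all Lie-derivative and pullback operations make sense there; condition $(i)$ is exactly what secures this.

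First I would dispose of condition $(i)$. It is, by definition, the statement that $Z$ is a contact vector field, which by the discussion above is equivalent to $d_q\varphi^t(H_q)\subset H_{\varphi^tq}$ for all admissible $t,q$ -- that is, to the flow's preserving $H$, which is precisely part $(i)$ of the definition of an isometry. Moreover $(i)$ is exactly what guarantees that the pullback $(\varphi^t)^*g$ is a well-defined fibre metric on $H$ and that $[Z,X],[Z,Y]$ remain horizontal whenever $X,Y$ are, so that every term appearing in $(ii)$ is meaningful.

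Assuming $(i)$, I would compute the Lie derivative of $g$ along $Z$ on horizontal fields, using $\mathcal{L}_Z X=[Z,X]$:
\[
(\mathcal{L}_Z g)(X,Y)=Z(g(X,Y))-g([Z,X],Y)-g(X,[Z,Y]).
\]
Thus condition $(ii)$ is exactly the assertion that $\mathcal{L}_Z g=0$ on $H$. The link with the flow is the standard identity $\frac{d}{dt}(\varphi^t)^*g=(\varphi^t)^*(\mathcal{L}_Z g)$, which is valid here because $\varphi^t$ preserves $H$ and which I would verify directly on a pair of horizontal fields. From it, $\mathcal{L}_Z g=0$ is equivalent to $(\varphi^t)^*g$ being constant in $t$, hence equal to its value $g$ at $t=0$; this is precisely the statement that every $\varphi^t$ satisfies the metric-preservation condition $(ii)$ of an isometry. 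Combining the two equivalences yields the proposition, since $Z$ is an infinitesimal isometry if and only if $\varphi^t$ preserves both $H$ and $g|_H$ for all $t$.

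The only genuine obstacle is the bookkeeping forced by $g$ being a partial tensor defined only on $H$: one must be careful that both $\mathcal{L}_Z g$ and the flow-derivative identity are interpreted in the correct sense on $H$ rather than on $TM$. It is condition $(i)$ -- equivalently, the contact property of $Z$ -- that makes every term in these formulas well-defined, so the argument reduces to the familiar Riemannian computation once this restriction is handled with care.
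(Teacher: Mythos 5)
Your proof is correct, and it is precisely the argument the paper has in mind: the paper states the proposition with ``From the very definition it follows that'' and no written proof, the intended justification being exactly your standard flow-differentiation argument (condition $(i)$ equals the contact property, i.e.\ flow-invariance of $H$, and condition $(ii)$ equals $\mathcal{L}_Z g = 0$ on $H$, which via $\frac{d}{dt}(\varphi^t)^*g = (\varphi^t)^*(\mathcal{L}_Z g)$ is equivalent to metric preservation by the flow). Your attention to the fact that $(i)$ is what makes the restricted Lie derivative and pullback well defined on the partial metric $g$ is exactly the right bookkeeping point.
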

Basic properties of the Reeb field and infinitesimal isometries are collected in the following proposition.
The algebra of infinitesimal isometries defined on an open subset $U\subset M$ will be denoted by $\iso(U)$. As it was mentioned, $\dim\iso(U) \leq (n+1)^2$.
\begin{proposition}\label{PropIsom}
Let $U$ be an open subset of $M$. Then
 \begin{enumerate}
  \item\label{a1} If $Z\in\iso(U)$ then $[\xi,Z]=0$, i.e., the Reeb field commutes with infinitesimal isometries;
  \item\label{a2} $\mathcal{L}_Z\alpha = 0$ for all $Z\in\iso(U)$;
  \item\label{a3} If $Z\in\Sec(TM)$ is an infinitesimal isometry on an open and dense subset of $M$, then $Z\in\iso(M)$.
  \item\label{a4} If $Z_1,Z_2\in\iso(U)$, where $U$ is assumed to be connected, and $Z_{1|V} = Z_{2|V}$ for an open set $V\subset U$ then $Z_1 = Z_2$;
  \item\label{a5} For every non-trivial local isometry $Z\in\iso(U)$, where $U$ is assumed to be connected, the set $\{q\in U:\;Z(q)\notin H_q\}$ is open and dense in $U$;
  \item\label{a6} If $Z\in\iso(U)$ and $hZ\in\iso(U)$, where $h\in C^{\infty}(U)$, then $h$ is constant;
  \item\label{a7} If $h\xi\in\iso(U)$, where $h\in C^{\infty}(U)$, then $h$ is constant;
  \item\label{a8} Suppose that $(M,H,g)$ is analytic and connected and let $Z\in\Sec(TM)$ be an analytic vector field. If $Z$ is a contact vector field (resp., infinitesimal isometry) on an open set $U\subset M$ then $Z$ is a contact vector field (resp., infinitesimal isometry) on $M$.
 \end{enumerate}
\end{proposition}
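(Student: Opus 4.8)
The plan is to prove the eight assertions in turn, exploiting throughout that $\xi$ and the normalized form $\alpha$ are canonically attached to $(H,g)$ together with the orientation, so that any isometry can only preserve or reverse them. For part (\ref{a1}) I would fix $Z\in\iso(U)$ with local flow $\varphi^t$; each $\varphi^t$ is a local isometry, and since $\varphi^0=\mathrm{id}$ preserves the orientation while orientation preservation is locally constant in $t$, every $\varphi^t$ preserves it, whence $\varphi^t_*\xi=\xi$. Differentiating at $t=0$ yields $[\xi,Z]=0$. For part (\ref{a2}) I would avoid the flow and compute from $\mathcal{L}_Z\alpha(W)=Z(\alpha(W))-\alpha([Z,W])$: on a horizontal $X$ one has $\alpha(X)=0$ and $\alpha([Z,X])=0$ by Proposition \ref{InfIsmCon}, while on $\xi$ one uses $\alpha(\xi)=1$ and $[\xi,Z]=0$ from part (\ref{a1}). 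As horizontal vectors together with $\xi$ span $TM$, this gives $\mathcal{L}_Z\alpha=0$.

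Part (\ref{a3}) is a closedness argument: once horizontal frames are fixed, the defining identities $\alpha([Z,X])=0$ and $Z(g(X,Y))-g([Z,X],Y)-g(X,[Z,Y])=0$ of Proposition \ref{InfIsmCon} are continuous functions of the base point, and their vanishing on a dense set forces vanishing everywhere. The main obstacle is the unique-continuation statement (\ref{a4}). Setting $Z=Z_1-Z_2\in\iso(U)$, which vanishes on the open set $V$, I would argue that its flow $\varphi^t$ fixes $V$ pointwise and therefore, by the rigidity of sub-Riemannian isometries (on a connected manifold an isometry is determined by its $1$-jet at a single point), coincides with the identity wherever defined; hence $Z=\frac{d}{dt}\big|_{t=0}\varphi^t\equiv 0$ on the connected set $U$. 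This is the delicate step, since it rests on isometry rigidity rather than on the pointwise conditions used so far; a more self-contained alternative is to defer (\ref{a4}) until the linear prolongation system of Section \ref{SecProl} is available and to read it off from the uniqueness of that system's solutions.

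Granting (\ref{a4}), the remaining items follow quickly. For (\ref{a5}) the set $\{q:\,Z(q)\notin H_q\}=\{\alpha(Z)\neq 0\}$ is open; were its complement to contain an open set $W$, then $\alpha(Z)\equiv 0$ on $W$ and Proposition \ref{ContCond} would give $d\alpha(Z,X)=-X(\alpha(Z))=0$ for every horizontal $X$, forcing $Z=0$ on $W$ by nondegeneracy of $d\alpha|_H$, and then $Z\equiv 0$ by (\ref{a4}), against nontriviality. For (\ref{a6}), with $Z$ nontrivial, the identity $[hZ,X]=h[Z,X]-X(h)Z$ shows that horizontality of the left side and of $h[Z,X]$ forces $X(h)$ to annihilate the vertical part of $Z$, hence $X(h)=0$ on the dense set of (\ref{a5}); while $0=[\xi,hZ]=\xi(h)Z$, obtained from part (\ref{a1}) applied to $Z$ and to $hZ$, gives $\xi(h)=0$ where $Z\neq 0$. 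Thus $dh=0$ on a dense open set, so $h$ is constant. Part (\ref{a7}) is the special case $Z=\xi$ of (\ref{a6}): speciality makes $\xi$ a nontrivial element of $\iso(U)$, so $h\xi\in\iso(U)$ forces $h$ to be constant.

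Finally (\ref{a8}) is real-analytic continuation. Working in analytic frames, the contact identity of Proposition \ref{ContCond} and the Killing identities of Proposition \ref{InfIsmCon} become real-analytic functions on $M$ which vanish on the open set $U$; by the identity principle on the connected manifold $M$ they vanish on all of $M$, so $Z$ is a contact vector field, respectively an infinitesimal isometry, on $M$.
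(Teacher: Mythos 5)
Your proposal is more self-contained than the paper itself: for parts (\ref{a1})--(\ref{a7}) the paper simply cites \cite{GroAnn}, and it proves only (\ref{a8}), by propagating the vanishing of $\alpha([Z,X^{(i)}_j])$ along a chain of neighborhoods carrying analytic horizontal frames. Your arguments for (\ref{a1}), (\ref{a2}), (\ref{a3}), (\ref{a5}) and (\ref{a6}) are correct and use only material genuinely available in Section \ref{SecReebIsm}: in particular, deducing (\ref{a5}) from Proposition \ref{ContCond} together with the nondegeneracy of $d\alpha$ on $H$, and then (\ref{a6}) from (\ref{a1}) and (\ref{a5}), is clean and matches the spirit of the paper.

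There are, however, two genuine gaps. First, in (\ref{a4}) the rigidity you invoke --- that a sub-Riemannian isometry of a connected set is determined by its $1$-jet at a single point --- is a nontrivial theorem that is neither proved nor cited in the paper, so as written the step is unsupported; moreover, even granting it, you need an open-and-closed argument in $U$ (the set of points near which $Z_1-Z_2$ vanishes identically), since for fixed $t$ the domain of the flow need not be connected or meet all of $U$. The step can be repaired: an orientation-preserving isometry of $(M,H,g)$ satisfies $\varphi_*\xi=\xi$, hence is an isometry of the Riemannian extension $\tilde{g}$ ($\tilde{g}_{|H\times H}=g$, $\tilde{g}(\xi,\xi)=1$, $\tilde{g}_{|H\times\Sp\{\xi\}}=0$) used in the paper's final subsection, so classical Riemannian rigidity applies; alternatively quote \cite{leDonne}. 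Your fallback through the prolongation system is indeed non-circular (Proposition \ref{propEqs} relies only on parts (\ref{a1}) and (\ref{a2})), but note that it requires the canonical connection and hence the special assumption, whereas Proposition \ref{PropIsom} is stated before speciality is introduced. Second, your proof of (\ref{a7}) assumes the manifold is special (``speciality makes $\xi$ a nontrivial element of $\iso(U)$''), which is not a hypothesis here: Section \ref{SecReebIsm} assumes only an oriented contact sub-Riemannian manifold. The statement is true in that generality by a simpler argument: since $\alpha(h\xi)=h$ and $d\alpha(\xi,\cdot)=0$, Proposition \ref{ContCond} applied to the contact field $h\xi$ gives $X(h)=0$ for every horizontal $X$, and bracket generation of $H$ then forces $dh=0$. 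Finally, a smaller point on (\ref{a8}): the contact and Killing identities written in frames are only locally defined, so one cannot literally speak of ``real-analytic functions on $M$''; you must either propagate the vanishing along chains of framed neighborhoods --- which is exactly what the paper's proof does --- or first repackage the conditions into globally defined analytic tensors (e.g. $\mathcal{L}_Z\alpha\wedge\alpha$, and the bilinear form $(X,Y)\mapsto Z(g(X,Y))-g(P[Z,X],Y)-g(X,P[Z,Y])$ on $H$) before invoking the identity principle.
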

\begin{proof}
(\ref{a1}) -- (\ref{a7}) are proved in \cite{GroAnn}. We will prove (\ref{a8}). Fix a point $q_0\in U$. Take an arbitrary $q_1\in M$ and an analytic curve $\gamma\colon[0,1]\longrightarrow M$ such that $\gamma(0) = q_0$, $\gamma(1) = q_1$. Cover $\gamma([0,1])$ by a finite family $U_0,\dots,U_k$ of open sets such that $q_0\in U_0$, $q_1\in U_k$, $U_i\cap U_{i+1}\ne\emptyset$, $i = 0,\dots k$, and for each $i$ there exist linearly independent $X^{(i)}_1,\dots,X^{(i)}_{2n}\in\Sec(U_i,H)$ which are supposed to be analytic.

Let us assume that for $i\geq 0$ 
\[
 \alpha([Z,X^{(i)}_j]) = 0
\] 
on $U_i$, $j=1,\dots,2n$. On $U_i\cap U_{i+1}$ we have
\[
 X^{(i+1)}_j = f^k_jX^{(i)}_k
\]
for some smooth functions $f^k_j$; here and below we use the summation convention with indices varying from $1$ to $2n$. Then 
\[
 \alpha([Z,X^{(i+1)}_j]) = f^k_j\alpha([Z,X^{(i)}_k]) = 0
\]
which means that $\alpha([Z,X^{(i+1)}_j])$, $j=1,\dots,2n$, are functions analytic on $U_{i+1}$ and vanishing on $U_i\cap U_{i+1}$. Therefore $\alpha([Z,X^{(i+1)}_j]) = 0$ on $U_{i+1}$ for all $j$ which proves that $Z$ is a contact vector field on $M$. The statement concerning being an infinitesimal isometry is proved analogously.
\end{proof}

Now we introduce the notion of the so-called $\is^*$-regular points which will be needed in the sequel. To this end, fix a point $q\in M$ and consider a sequence of connected neighborhoods $\{U_m\}$ of $q$ such that 
\begin{align*}
 \overline{U}_{m+1}\subset U_m,\; m=1,2\dots, \;  \bigcap_{m=1}^{\infty}U_m = \{q\} \text{.}
\end{align*}
It follows from the proposition above that the mapping $\iso(U_m)\longrightarrow\iso(U_{m+1})$, $Z\longrightarrow Z_{|U_{m+1}}$ is injective. Consequently, the sequence $\{\dim\iso(U_m)\}$ of integers is non-decreasing and therefore stabilizes from a certain moment on a number $N\leq (n+1)^2$. Denote by $\is^*(q)$ the algebra of germs at $q$ of local infinitesimal isometries of $(M,H,g)$. Thus there exists an integer $m_0$ (depending on $q$) such that $\dim\is^*(q) = \dim\iso(U_m)$ for every $m>m_0$.
In particular we conclude that every point $q$ has a neighborhood $U$, which can be chosen to be arbitrarily small, such that the mapping $\iso(U)\ni Z\longrightarrow (Z)_q\in\is^*(q)$ is an isomorphism; here and below $(Z)_q$ stands for the germ of $Z$ at $q$. Such a neighborhood $U$ of $q$ will be called an \textit{$\iso$-special neighborhood of $q$}. A point $q\in M$ will be said to be \textit{$\is^*$-regular} if the function $p\longrightarrow\dim\is^*(p)$ is constant in a neighborhood of $q$. Denote by $M^*$ the set of $\is^*$-regular points of $M$.
 
\begin{proposition}\label{M^*reg}
 The set $M^*$ is open and dense in $M$.
\end{proposition}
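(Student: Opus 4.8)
The plan is to show that the integer-valued function $d\colon M\longrightarrow\mathbb{Z}$, $d(p) = \dim\is^*(p)$, is lower semicontinuous, and then to extract openness and density of its set of local-constancy points by a standard maximum argument. Recall that $d$ is bounded above by $(n+1)^2$, so on any open set it attains its supremum, and the whole argument is purely a matter of how $\dim\is^*(p)$ can jump as $p$ varies.

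First I would establish lower semicontinuity: for every $q\in M$ there is a neighborhood on which $d\geq d(q)$. Fix $q$ and choose a connected $\iso$-special neighborhood $U$ of $q$, so that the germ map $\iso(U)\ni Z\longmapsto(Z)_q\in\is^*(q)$ is an isomorphism; in particular $\dim\iso(U) = d(q)$. For an arbitrary $p\in U$ consider the germ map $\rho_p\colon\iso(U)\longrightarrow\is^*(p)$, $Z\longmapsto(Z)_p$. This map is injective: if $(Z)_p = 0$ then $Z$ vanishes on an open subset of the connected set $U$, and Proposition~\ref{PropIsom}(\ref{a4}) (uniqueness of infinitesimal isometries agreeing on an open set) forces $Z\equiv 0$ on $U$. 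Hence $d(p) = \dim\is^*(p)\geq\dim\iso(U) = d(q)$ for all $p\in U$, which is the desired lower semicontinuity.

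Openness of $M^*$ is then immediate: if $d$ is constant on a neighborhood $W$ of $q$, then every $p\in W$ has $W$ itself as a neighborhood on which $d$ is constant, so $W\subset M^*$. For density, let $\Omega\subset M$ be any nonempty open set. Since $d$ is integer-valued and bounded on $\Omega$, the value $s = \max_{p\in\Omega}d(p)$ is attained at some $q_0\in\Omega$. By lower semicontinuity there is a neighborhood $V\subset\Omega$ of $q_0$ with $d\geq s$ on $V$, while $d\leq s$ on all of $\Omega$ by maximality of $s$; therefore $d\equiv s$ on $V$, so $q_0\in M^*\cap\Omega$. As $\Omega$ was arbitrary, $M^*$ is dense.

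The main point to get right is the injectivity step underlying lower semicontinuity, i.e.\ the legitimate use of the unique-continuation property of infinitesimal isometries (Proposition~\ref{PropIsom}(\ref{a4})) together with the existence of \emph{connected} $\iso$-special neighborhoods; everything afterward is the classical semicontinuity-plus-maximum bookkeeping. One should also verify that an $\iso$-special neighborhood may indeed be taken connected, which follows from the construction via the shrinking sequence $\{U_m\}$ of connected neighborhoods preceding the statement.
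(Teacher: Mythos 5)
Your proof is correct and is essentially the paper's intended argument: the paper proves Proposition~\ref{M^*reg} by declaring it ``similar to Proposition~\ref{OpenDens}'', i.e.\ semicontinuity of the dimension function plus an extremal-point argument, which is exactly your structure. Your dualization is the right one (for $\dim\is^*$ the restriction maps $\iso(U)\to\is^*(p)$ give \emph{lower} semicontinuity, so one takes a maximum, whereas for $\dim\is$ Lemma~\ref{lem3} gives upper semicontinuity and one takes a minimum), and your injectivity step via Proposition~\ref{PropIsom}(\ref{a4}) on connected $\iso$-special neighborhoods matches the paper's own use of this fact in Proposition~\ref{prop01}.
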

\begin{proof}
 The proof is similar to the proof of Proposition \ref{OpenDens} below.
\end{proof}

\begin{proposition}\label{prop01}
 Suppose that $q$ is a $\is^*$-regular point and let $U$ be its $\iso$-special neighborhood chosen in such a way that the function $U\ni p\longrightarrow\dim\is^*(p)$ is constant. Then for every $p\in U$ and every $X^*\in\is^*(p)$ there exists $X\in\is(U)$ with $(X)_p = X^*$.
\end{proposition}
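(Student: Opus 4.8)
The plan is to prove the proposition by showing that, for each $p\in U$, the germ map
\[
 r_p\colon\iso(U)\longrightarrow\is^*(p),\qquad Z\longmapsto (Z)_p,
\]
is surjective; the desired $X$ is then any $r_p$-preimage of $X^*$. Since $\iso(U)$ and $\is^*(p)$ are both finite-dimensional real vector spaces (each of dimension at most $(n+1)^2$) and $r_p$ is plainly $\R$-linear, I would obtain surjectivity by a dimension count: establish that $r_p$ is injective and that $\dim\iso(U)=\dim\is^*(p)$, so that an injective linear map between equidimensional spaces is automatically onto.

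For injectivity, I would argue as follows. Suppose $(Z)_p=0$ for some $Z\in\iso(U)$. By the meaning of a germ this says that $Z$ vanishes on some open neighborhood $V\subset U$ of $p$. An $\iso$-special neighborhood is connected, being drawn from the connected neighborhoods $U_m$ used to define $\is^*(q)$, so item~(\ref{a4}) of Proposition~\ref{PropIsom}, applied with the zero field on $V$, forces $Z=0$ on all of $U$. Hence $\ker r_p=\{0\}$ and $r_p$ is injective.

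For the dimension equality, recall that $U$ being an $\iso$-special neighborhood of $q$ means precisely that the germ map $\iso(U)\ni Z\mapsto (Z)_q\in\is^*(q)$ is an isomorphism, whence $\dim\iso(U)=\dim\is^*(q)$. On the other hand, $U$ was chosen so that the function $p\mapsto\dim\is^*(p)$ is constant on $U$; since $q\in U$, that constant value is exactly $\dim\is^*(q)$. Therefore $\dim\is^*(p)=\dim\is^*(q)=\dim\iso(U)$ for every $p\in U$, and combined with the injectivity of $r_p$ this shows $r_p$ is an isomorphism, in particular surjective, as required.

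I expect essentially no analytic obstacle here: the entire argument is soft linear algebra resting on two facts already available — the rigidity (unique-continuation) property item~(\ref{a4}), which delivers injectivity, and the constancy of $\dim\is^*$ on $U$ together with the defining isomorphism of an $\iso$-special neighborhood, which deliver the dimension equality. The only point demanding care is to keep the base point $q$ and the variable point $p$ distinct, and to use $q\in U$ to transfer the dimension computed at $q$ to the common value attained at every $p\in U$.
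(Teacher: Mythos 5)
Your proposal is correct and is essentially the paper's own proof: the paper likewise notes that $\dim\is^*(p)=\dim\is^*(q)=\dim\iso(U)$ and that the germ map $\iso(U)\ni Z\longmapsto (Z)_p\in\is^*(p)$ is a linear injection, concluding surjectivity by the same dimension count. You have merely spelled out the two ingredients the paper leaves implicit (injectivity via the unique-continuation property, item~(\ref{a4}) of Proposition~\ref{PropIsom}, and the transfer of the dimension from $q$ to $p$), which is fine.
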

\begin{proof}
 Indeed, take $p\in U$. Clearly, $\dim\is^*(p) = \dim\is^*(q) = \dim\is(U) $ and $U\ni Z\longrightarrow (Z)_p\in \is^*(p)$ is a linear injection.
\end{proof}

\section{sub-Riemannian Connection}\label{sRCON}
In this section we present the construction of a canonical sub-Riemannian connection which was introduced in \cite{GroConn, GroConnCor}. We state some new results which are not present in \cite{GroConn, GroConnCor}. 
Again, $(M,H,g)$, $\dim M = 2n+1$, is a fixed contact sub-Riemannian manifold, $\alpha$ is the normalized contact form and $\xi$ is the Reeb vector field. By $L_H(M)$ we denote the \textit{bundle of horizontal frames}, i.e. the space
\begin{align*}
 L_H(M) = \{(q;v_1,\dots,v_{2n}):\; q\in M, \;v_1,\dots,v_{2n} \text{ is a basis of } H_q\} \text{.}
\end{align*}
There is a natural right action of the general linear group $GL(2n)$ on $L_{H}(M)$: for $a\in GL(2n)$ and $l = (q;v_1,\dots,v_{2n})\in L_{H}(M)$
\begin{align}\label{action}
 R_al = (q;a_1^iv_i,\dots,a_{2n}^iv_i) \text{;}
\end{align}
Clearly, $\pi:L_H(M)\longrightarrow M$, where $\pi(q;v_1,\dots,v_{2n}) = q$, is a principle bundle with structure group $GL(2n)$. Note that if $l = (q;v_1,\dots,v_{2n})\in L_H(M)$ then $l$ can be regarded as a linear isomorphism $l:\R^{2n}\longrightarrow H_{\pi(l)}$, $l(\zeta) = \zeta^iv_i$. 

By
\begin{align}\label{project}
 P:TM\longrightarrow H
\end{align}
we denote the projection determined by the splitting $TM = H\oplus \Sp\{\xi\}$. We define a sub-Riemannian version of the canonical $1$-form from the linear frame bundles theory. Let $\theta$ be a $1$-form on $L_H(M)$ with values in $\R^{2n}$ defined by formula
\begin{align}\label{CanForm}
 \theta_l = l^{-1}\circ P\circ d_l\pi:T_lL_H(M)\longrightarrow \R^{2n}\text{,}
\end{align}
$l\in L_H(M)$. $\theta$ will be referred to as the \textit{canonical $1$-form}. Clearly, $R^*_a\theta = a^{-1}\cdot\theta$, $a\in GL(2n)$. 

Let us pick a connection $\Gamma$ on the bundle $L_H(M)$. In other words $\Gamma$ is a distribution $\Gamma\subset TL_H(M)$ such that  
\begin{align}\label{split}
 TL_H(M) = \Gamma\oplus V\text{,}
\end{align}
with $V = \ker d\pi$, and $R_{a*}(\Gamma) = \Gamma$ for every $a\in GL(2n)$. In our case $\Gamma$ splits into the sum
\begin{align*}
 \Gamma = \Gamma^H\oplus\Gamma^{\xi}\text{,}
\end{align*}
where $\Gamma^H = (d\pi_{|\Gamma})^{-1}(H)$, $\Gamma^{\xi} = (d\pi_{|\Gamma})^{-1}(\Sp\{\xi\})$.
The bundle $H\longrightarrow M$ is a vector bundle associated with $L_H(M)$ with typical fiber $\R^{2n}$, so in the classical manner \cite{KoNo} $\Gamma$ defines the covariant differentiation operator
\begin{align*}
 \nabla:\Sec(TM)\times \Sec(H) \longrightarrow \Sec(H) \text{:}
\end{align*}
if $q\in M$, and $Z\in\Sec(TM)$, $X\in\Sec(H)$ then, by definition,
\begin{align} \label{CovDiff}
 (\nabla_ZX)(q) = l(Z^*_l(F_X)) \text{,}
\end{align}
where $l\in\pi^{-1}(q)$, $Z^*$ is the $\Gamma$-horizontal lift of $Z$, i.e., the vector field on $L_H(M)$ with values in $\Gamma$ such that $d\pi(Z^*) = Z$, and $F_X:L_H(M)\longrightarrow\R^{2n}$ is a smooth function defined by $F_X(p) = p^{-1}(X(\pi(p)))$. Of course the value of (\ref{CovDiff}) does not depend on the choice of $l\in\pi^{-1}(q)$. Note that $\nabla$ can be equivalently defined using the notion of parallel transport. Given a connection $\Gamma$ on $L_H(M)$ we defined its \textit{torsion form} $\Theta$ as
\begin{align*}
 \Theta = d\theta\circ (p_{\Gamma},p_{\Gamma})\text{,}
\end{align*}
where $p_{\Gamma}:TL_H(M)\longrightarrow\Gamma$ is the projection corresponding to the splitting (\ref{split}). $\Theta$ is a $2$-form on $L_H(M)$ with values in $\R^{2n}$. Writing $\Theta = (\Theta^1,\dots,\Theta^{2n})$, $\theta = (\theta^1,\dots,\theta^{2n})$ and $\eta = \pi^*(\alpha)$ we have \cite{GroConn}
\begin{align*}
 \Theta^i = T^i_{jk}\theta^j\wedge\theta^k + S^i_j\theta^j\wedge\eta
\end{align*}
for some smooth functions $T^i_{jk}$ and $S^i_j$. The expression $T^i_{jk}\theta^j\wedge\theta^k$ is called the \textit{horizontal torsion}, while $S^i_j\theta^j\wedge\eta$ is the \textit{vertical torsion}. The torsion form determines the torsion tensor $T:\Sec(TM)\times\Sec(TM)\longrightarrow\Sec(H)$ by formula
\begin{align}\label{torformula}
 T(X,Y)(q) = l(\Theta_l(X^*,Y^*)) \text{,}
\end{align}
$l\in\pi^{-1}(q)$ and $X^*$ (resp. $Y^*$) is a $\Gamma$-horizontal lift of $X$ (resp. $Y$).
It can be shown that for all $X,Y \in\Sec(H)$, $Z\in\Sec(\Sp\{\xi\})$
\begin{align*}
 T(X,Y) = \nabla_XY - \nabla_YX - P([X,Y]) \text{,}
\end{align*}
and
\begin{align}\label{VerTor}
 T(Z,X) = \nabla_ZX - P([Z,X]) \text{.}
\end{align}
It follows that the horizontal torsion (resp. vertical torsion) vanishes if and only if $\nabla_XY = \nabla_YX - P([X,Y])$ (resp. $\nabla_{\xi}X = [\xi,X]$) for all $X,Y\in\Sec(H)$.

Now let us consider the metric reduction of the bundle $L_H(M)$, i.e., the bundle of orthonormal horizontal frames
\begin{gather*}
 O_{H,g}(M) =  \\
 \{(q;v_1,\dots,v_{2n}):\; q\in M \text{ and } v_1,\dots,v_{2n} \text{ is an orthonormal basis of } H_q\}\text{.}
\end{gather*}
This is a principle bundle with structure group $O(2n)$, where the action is given by (\ref{action}). Take a connection $\Gamma$ on $O_{H,g}(M)$. Such a connection is metric in the sense that if $\nabla$ stands for the covariant differentiation induced by $\Gamma$ then
\begin{align}\label{MetrCon}
 Z(g(X,Y)) = g(\nabla_ZX,Y) + g(X,\nabla_ZY)
\end{align}
for every $X,Y\in\Sec(H)$, $Z\in\Sec(TM)$. Connections on the bundle $O_{H,g}(M)$ will be referred to as \textit{sub-Riemannian connections}.

Suppose now that $Z$ is an infinitesimal isometry. $Z$ naturally lifts to a vector field $\hat{Z}$ on $O_{H,g}(M)$ in the following manner. Fix an arbitrary point $q_0\in M$. Let $\psi^t:U\longrightarrow M$ be the (local) flow of $Z$ defined in a neighborhood $U$ of $q_0$. Then the formula 
\begin{align}\label{FlowLift}
 \hat{\psi}^t(q;v_1,\dots,v_{2n}) = (\psi^t(q);d_q\psi^t(v_1),\dots,d_q\psi^t(v_{2n}))
\end{align}
defines the flow $\hat{\psi}^t: \pi^{-1}(U)\longrightarrow O_{H,g}(M)$, and we set 
\begin{align*}
 \hat{Z}(l) = \frac{d}{dt}\Big\vert_{t=0}\hat{\psi}^t(l) \text{.} 
\end{align*}
Such a field has many special properties \cite{GroConn}, in particular 
\begin{align*}
 \mathcal{L}_{\hat{Z}}\theta = 0, \; \mathcal{L}_{\hat{Z}}\eta = 0 \text{.}
\end{align*}
Moreover, by the very definition of $\hat{Z}$ we have $R_{a*}\hat{Z} = \hat{Z}$ for every $a\in O(2n)$. 

Now suppose that $\xi$ is an infinitesimal isometry. Then there is a certain distinguished class of connections with vanishing vertical torsion.
\begin{lemma}\label{corVertTor}
 Suppose that $\xi$ is an infinitesimal isometry and let $\Gamma$ be a connection on $O_{H,g}(M)$ such that $\Sp\{\hat{\xi}\}\subset\Gamma$. Then the vertical torsion of $\Gamma$ vanishes. 
\end{lemma}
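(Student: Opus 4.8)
The plan is to prove the statement directly by showing that all the coefficient functions $S^i_j$ of the vertical torsion $S^i_j\theta^j\wedge\eta$ vanish, exploiting the two roles played by the lift $\hat\xi$. First I would observe that, under the hypothesis $\Sp\{\hat\xi\}\subset\Gamma$, the field $\hat\xi$ is precisely the $\Gamma$-horizontal lift of $\xi$: indeed $d\pi(\hat\xi)=\xi$ by the definition (\ref{FlowLift}) of the natural lift, while $\hat\xi$ takes values in $\Gamma$ by assumption, so it coincides with the unique $\Gamma$-horizontal field projecting onto $\xi$ (uniqueness follows from the splitting (\ref{split})). Next I would evaluate the canonical form on $\hat\xi$: from (\ref{CanForm}),
\[
 \theta_l(\hat\xi_l) = l^{-1}\big(P(d_l\pi(\hat\xi_l))\big) = l^{-1}(P(\xi(\pi(l)))) = 0,
\]
since the projection $P$ of (\ref{project}) annihilates $\xi$. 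Hence $\iota_{\hat\xi}\theta = 0$.

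The heart of the argument is the identity $\iota_{\hat\xi}d\theta = 0$. Because $\xi$ is an infinitesimal isometry and $\hat\xi$ is its natural flow-lift, we have $\mathcal{L}_{\hat\xi}\theta = 0$. Combining this with the previous step via Cartan's formula gives
\[
 \iota_{\hat\xi}d\theta = \mathcal{L}_{\hat\xi}\theta - d(\iota_{\hat\xi}\theta) = 0 - 0 = 0.
\]
Now take any $X\in\Sec(H)$ and let $X^*$ be its $\Gamma$-horizontal lift. Since both $\hat\xi$ and $X^*$ lie in $\Gamma$, the projection $p_{\Gamma}$ acts as the identity on them, so by (\ref{torformula}) and the definition of $\Theta$,
\[
 \Theta_l(\hat\xi,X^*) = d\theta(p_{\Gamma}\hat\xi,p_{\Gamma}X^*) = d\theta(\hat\xi,X^*) = (\iota_{\hat\xi}d\theta)(X^*) = 0.
\]

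Finally I would read off the vanishing of the vertical torsion. Using $\theta(\hat\xi)=0$, $\eta(\hat\xi)=\alpha(\xi)=1$, $\eta(X^*)=\alpha(X)=0$ and $\theta_l(X^*)=l^{-1}(X(\pi(l)))$, evaluating the decomposition $\Theta^i = T^i_{jk}\theta^j\wedge\theta^k + S^i_j\theta^j\wedge\eta$ on the pair $(\hat\xi,X^*)$ collapses the horizontal part and yields
\[
 0 = \Theta^i(\hat\xi,X^*) = -S^i_j\,\theta^j(X^*).
\]
As $X$ ranges over $\Sec(H)$ the vector $X(\pi(l))$ fills out $H_{\pi(l)}$, so $\theta(X^*)=l^{-1}(X(\pi(l)))$ ranges over all of $\R^{2n}$; therefore $S^i_j=0$ for all $i,j$, which is exactly the assertion that the vertical torsion vanishes.

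The step I expect to be most delicate is ensuring the two descriptions of $\hat\xi$ are used consistently: it must simultaneously be treated as the $\Gamma$-horizontal lift of $\xi$ (so that $p_{\Gamma}$ is trivial on it and it feeds into (\ref{torformula})) and as the natural flow-lift (so that the property $\mathcal{L}_{\hat\xi}\theta=0$ is available). Should one prefer to bypass the frame-bundle bookkeeping, an alternative route is to use the characterization that the vertical torsion vanishes iff $\nabla_\xi X = [\xi,X]$ for all $X\in\Sec(H)$: since $\xi$ is an infinitesimal isometry, $[\xi,X]\in\Sec(H)$ by Proposition \ref{InfIsmCon}, so by (\ref{VerTor}) it suffices to verify $\nabla_\xi X=[\xi,X]$; this follows by computing $(\nabla_\xi X)(q)=l(\hat\xi_l(F_X))$ from (\ref{CovDiff}) with $\xi^*=\hat\xi$ and recognizing the $t$-derivative of $F_X\circ\hat\psi^t$ as $l^{-1}\big((\mathcal{L}_\xi X)(q)\big)=l^{-1}([\xi,X](q))$.
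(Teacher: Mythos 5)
Your proposal is correct, and your primary argument takes a genuinely different route from the paper's. The paper's proof is a one-liner: it invokes the characterization recorded after (\ref{VerTor}) --- the vertical torsion vanishes if and only if $\nabla_\xi X=[\xi,X]$ for every $X\in\Sec(H)$ --- and defers the verification of that identity to the proof of Proposition 5.1 in \cite{GroConn}. You instead work entirely on the frame bundle: under the hypothesis $\Sp\{\hat\xi\}\subset\Gamma$ the natural lift $\hat\xi$ is the $\Gamma$-horizontal lift of $\xi$, so $p_\Gamma$ acts trivially on it; combining $\iota_{\hat\xi}\theta=0$ (since $P\xi=0$) with $\mathcal{L}_{\hat\xi}\theta=0$ (quoted in Section \ref{sRCON} for natural lifts of infinitesimal isometries --- this is exactly where the isometry hypothesis enters) and Cartan's formula gives $\iota_{\hat\xi}d\theta=0$, hence $\Theta(\hat\xi,X^*)=0$; evaluating the decomposition $\Theta^i=T^i_{jk}\theta^j\wedge\theta^k+S^i_j\theta^j\wedge\eta$ on the pair $(\hat\xi,X^*)$ kills the horizontal part and forces $S^i_j\theta^j(X^*)=0$, and since $\theta_l(X^*_l)=l^{-1}(X(\pi(l)))$ sweeps out all of $\R^{2n}$ as $X$ varies, every $S^i_j$ vanishes. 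Each of these steps checks out. What your route buys is self-containedness and transparency: apart from the property $\mathcal{L}_{\hat\xi}\theta=0$, which the paper itself quotes from \cite{GroConn}, everything is proved from the definitions in Section \ref{sRCON}, and the roles of the two hypotheses are cleanly separated. What the paper's route buys is brevity and the covariant-derivative formulation $\nabla_\xi X=[\xi,X]$, which is the form actually used right after the lemma (namely $\nabla_ZX=P[Z,X]$ for $Z\in\Sp\{\xi\}$). Note also that your closing ``alternative'' paragraph is in fact precisely the paper's proof, with the computation the paper outsources to \cite[Proposition 5.1]{GroConn} written out explicitly ($\hat\xi_l(F_X)=l^{-1}([\xi,X](\pi(l)))$, whence $\nabla_\xi X=[\xi,X]$); so you have in effect supplied both the paper's argument and an independent one.
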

\begin{proof}
 It is enough to show that $\nabla_{\xi}X = [\xi,X]$ for every $X\in\Sec(H)$ -- cf. the proof of \cite[Proposition 5.1]{GroConn}.
\end{proof}

Under the assumptions of the lemma $\nabla_ZX = P[Z,X]$ whenever $Z\in\Sp\{\xi\}, X\in\Sec(H)$.

A contact sub-Riemannian manifold $(M,H,g)$ will be called \textit{special} if it is oriented as a contact manifold and the Reeb vector field $\xi$ is an infinitesimal isometry.

Fix a special sub-Riemannian manifold $(M,H,g)$ and take $\Gamma$ to be a connection on $O_{H,g}(M)$ chosen as in Lemma \ref{corVertTor}. By the lemma we know that the vertical torsion of $\Gamma$ vanishes. It turns out (see \cite{GroConn}) that by a suitable modification of $\Gamma^H$ we can get rid of the horizontal torsion as well, and the connection obtained in this way is unique. Our considerations may be summed up as follows.
\begin{theorem}[\cite{GroConn}]\label{SRCon}
On every special sub-Riemannian manifold there exists a unique metric and torsion-free sub-Riemannian connection.
\end{theorem}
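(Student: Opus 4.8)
The plan is to prove existence and uniqueness separately, in both cases exploiting the fact that, by the splitting $TM = H \oplus \Sp\{\xi\}$, a sub-Riemannian connection is completely determined by the operators $\nabla_\xi X$ and $\nabla_X Y$ for $X, Y \in \Sec(H)$, and that the vertical and horizontal parts of the torsion are governed respectively by these two operators, via (\ref{VerTor}) and the formula $T(X,Y) = \nabla_X Y - \nabla_Y X - P[X,Y]$.

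For existence, I would first produce a metric connection whose vertical torsion already vanishes. Starting from any connection $\Gamma_0$ on $O_{H,g}(M)$ (such connections exist by the usual partition-of-unity construction, and are automatically metric by (\ref{MetrCon})), I replace its $\xi$-part by $\Sp\{\hat{\xi}\}$, retaining $\Gamma_0^H$. Since $\hat{\xi}$ is $O(2n)$-invariant and projects onto $\xi$, while $\Gamma_0^H$ projects isomorphically onto $H$, the distribution $\Gamma_0^H \oplus \Sp\{\hat{\xi}\}$ is an $O(2n)$-invariant complement to $V = \ker d\pi$, hence a connection $\Gamma$ with $\Sp\{\hat{\xi}\} \subset \Gamma$; by \lemref{corVertTor} its vertical torsion vanishes. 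Writing $\nabla$ and $T$ for the induced covariant derivative and horizontal torsion, it remains to remove $T$. I look for the desired connection in the form $\tilde{\nabla} = \nabla + D$, where $D\colon \Sec(TM)\times\Sec(H)\to\Sec(H)$ is the difference tensor determined by $D(\xi,\cdot)=0$ and, for $X,Y\in\Sec(H)$, by the Koszul-type prescription
\begin{equation*}
 2g(D(X,Y),W) = -g(T(X,Y),W) + g(T(Y,W),X) - g(T(W,X),Y), \qquad W\in\Sec(H).
\end{equation*}
A direct check shows that $D(Z,\cdot)$ is $g$-skew-symmetric for every $Z$, so that $\tilde{\nabla}$ again satisfies (\ref{MetrCon}) and corresponds to a genuine connection on $O_{H,g}(M)$; that $D(\xi,\cdot)=0$ guarantees $\tilde{\nabla}_\xi X = \nabla_\xi X = [\xi,X]$, so the vertical torsion stays zero; and the antisymmetric part $D(X,Y)-D(Y,X)$ equals $-T(X,Y)$, so the horizontal torsion of $\tilde{\nabla}$ vanishes. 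This yields the required connection.

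For uniqueness, let $\nabla^1,\nabla^2$ be two metric torsion-free sub-Riemannian connections and set $D = \nabla^1 - \nabla^2$, a tensor. Vanishing of the vertical torsion, together with (\ref{VerTor}) and the fact that $[\xi,X]\in\Sec(H)$ (since $\xi$ is an infinitesimal isometry), forces $\nabla^i_\xi X = [\xi,X]$ for both $i$, whence $D(\xi,\cdot)=0$. On $\Sec(H)\times\Sec(H)$ the trilinear form $B(X,Y,W) = g(D(X,Y),W)$ is skew-symmetric in $(Y,W)$ (both connections being metric) and symmetric in $(X,Y)$ (both being horizontally torsion-free, so $D(X,Y)=D(Y,X)$); the standard alternation $B(X,Y,W) = -B(X,W,Y) = -B(W,X,Y) = B(W,Y,X) = B(Y,W,X) = -B(Y,X,W) = -B(X,Y,W)$ then gives $B\equiv 0$, i.e. $D=0$ and $\nabla^1 = \nabla^2$.

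The main obstacle is the existence half, specifically the consistency of the Koszul prescription: one must verify that the single tensor $D$ defined above simultaneously satisfies two a priori independent requirements — $g$-skew-symmetry in its last two slots (needed to preserve metricity, equivalently to keep $\tilde{\nabla}$ a connection on the orthonormal frame bundle) and the prescribed antisymmetrization $D(X,Y)-D(Y,X) = -T(X,Y)$ (needed to cancel the horizontal torsion). This is the sub-Riemannian analogue of the Koszul-formula computation underlying the fundamental theorem of Riemannian geometry; the only genuinely new points are the bookkeeping of the extra $\xi$-direction, handled cleanly by imposing $D(\xi,\cdot)=0$, and the check that the modification does not disturb the vertical torsion already arranged through \lemref{corVertTor}.
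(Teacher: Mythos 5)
Your proposal is correct and takes essentially the same route as the paper, which likewise starts from a connection on $O_{H,g}(M)$ containing $\Sp\{\hat{\xi}\}$ so that Lemma~\ref{corVertTor} kills the vertical torsion, and then removes the horizontal torsion by a modification of $\Gamma^H$, deferring the details of that modification and of uniqueness to \cite{GroConn}. Your contorsion-tensor (Koszul) computation and the standard three-slot symmetry argument correctly supply exactly the details the paper outsources, and your choice $D(\xi,\cdot)=0$ is precisely what makes the modification touch only $\Gamma^H$, as the paper's sketch requires.
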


Up to the end of this section we suppose that $(M,H,g)$ is a special sub-Riemannian manifold and $\Gamma$ is the connection from Theorem \ref{SRCon}. Denote by $\omega$ the corresponding connection form, i.e., $\Gamma = \ker\omega$, $R_a^*\omega = Ad_{a^{-1}}\cdot\omega$ for all $a\in O(2n)$, and $\omega(A^*) = A$ for every $A\in o(2n)$, where $A^*$ is the fundamental vector field defined by $A$. By the \textit{curvature form} of $\Gamma$ we mean a $2$-form $\Omega$ on $O_{H,g}(M)$ with values in $o(2n)$ defined as
\begin{align*}
 \Omega = d\omega\circ(p_{\Gamma},p_{\Gamma})\text{.}
\end{align*}
$\Omega$ determines the curvature tensor defined in the following way. For $Z,W\in\Sec(TM)$ we consider an operator $R(Z,W)\colon\Sec(H)\longrightarrow\Sec(H)$ given by
\begin{align*}
 (R(Z,W)X)(q) = l(\Omega_l(Z^*,W^*)(l^{-1}X))\text{,}
\end{align*}
where $l\in\pi^{-1}(q)$ and $Z^*$ and $W^*$ are, respectively, lifts of $Z$ and $W$ to $O_{H,g}(M)$. Similarly as in the classical case one proves the formula
\begin{align}\label{Rform}
 R(Z,W)X = \nabla_Z\nabla_WX - \nabla_W\nabla_ZX - \nabla_{[Z,W]}X \text{.}
\end{align}
There also hold the following horizontal versions of Bianchi identities.
\begin{proposition}\label{Bianchi}
 For every $X,Y,Z\in\Sec(H)$
 \begin{align*}
  R(X,Y)Z + R(Y,Z)X + R(Z,X)Y = 0
 \end{align*}
 (the first identity), and
 \begin{align*}
 (\nabla_XR) (Y,Z) +(\nabla_YR) (Z,X) + (\nabla_ZR) (X,Y) = 0
 \end{align*}
 (the second identity).
\end{proposition}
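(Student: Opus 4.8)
The plan is to prove both identities by direct computation from the defining formula \eqref{Rform}, leaning on three structural facts available here: horizontal torsion-freeness, which gives $\nabla_AB-\nabla_BA=P([A,B])$ for $A,B\in\Sec(H)$; vertical torsion-freeness (Lemma~\ref{corVertTor}), which gives $\nabla_\xi X=[\xi,X]$ for $X\in\Sec(H)$, where moreover $[\xi,X]$ is again horizontal because $\xi$ is an infinitesimal isometry (Proposition~\ref{InfIsmCon}); and the Jacobi identity for the Lie bracket. The one genuinely new ingredient, beyond a transcription of the Riemannian argument, will be the vanishing $R(\xi,\cdot)=0$, which absorbs the contributions coming from the fact that the bracket of two horizontal fields need not be horizontal.

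For the first identity I would expand the cyclic sum of $R(X,Y)Z$ via \eqref{Rform} and regroup the second-order terms by their outer covariant derivative; torsion-freeness converts each inner difference $\nabla_YZ-\nabla_ZY$ into $P([Y,Z])$, so that the cyclic sum becomes $\sum_{\mathrm{cyc}}\big(\nabla_X(P([Y,Z]))-\nabla_{[Y,Z]}X\big)$. I would then show that each summand collapses to $P([X,[Y,Z]])$: writing $[Y,Z]=P([Y,Z])+\alpha([Y,Z])\xi$, expanding $\nabla_{[Y,Z]}X=\nabla_{P([Y,Z])}X+\alpha([Y,Z])\nabla_\xi X$, applying torsion-freeness once more to $\nabla_X(P([Y,Z]))-\nabla_{P([Y,Z])}X$, and using $\nabla_\xi X=[\xi,X]$ together with $P([\xi,X])=[\xi,X]$ to cancel the vertical correction. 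The cyclic sum of $P([X,[Y,Z]])$ then vanishes by the Jacobi identity, applied inside the linear projection $P$.

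For the second identity I would pass to the operator form $R(Y,Z)=[\nabla_Y,\nabla_Z]-\nabla_{[Y,Z]}$ acting on $\Sec(H)$ and use the tensorial expression $(\nabla_XR)(Y,Z)=[\nabla_X,R(Y,Z)]-R(\nabla_XY,Z)-R(Y,\nabla_XZ)$. Summing cyclically, the terms $\sum_{\mathrm{cyc}}[\nabla_X,[\nabla_Y,\nabla_Z]]$ vanish by the operator Jacobi identity, while $\sum_{\mathrm{cyc}}[\nabla_X,\nabla_{[Y,Z]}]$ is rewritten through $[\nabla_X,\nabla_V]=R(X,V)+\nabla_{[X,V]}$ (a restatement of \eqref{Rform} valid for any $V\in\Sec(TM)$); the terms $\sum_{\mathrm{cyc}}\nabla_{[X,[Y,Z]]}$ drop out by Jacobi, leaving $\sum_{\mathrm{cyc}}(\nabla_XR)(Y,Z)=-\sum_{\mathrm{cyc}}R(X,[Y,Z])-\sum_{\mathrm{cyc}}\big(R(\nabla_XY,Z)+R(Y,\nabla_XZ)\big)$. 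Substituting $[Y,Z]=(\nabla_YZ-\nabla_ZY)+\alpha([Y,Z])\xi$ and expanding, the purely horizontal contributions cancel identically by the antisymmetry $R(A,B)=-R(B,A)$ after reindexing, and one is left with the single remainder $-\sum_{\mathrm{cyc}}\alpha([Y,Z])\,R(X,\xi)$.

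This remainder is where the work lies, and it is the main obstacle: I must show $R(\xi,\cdot)=0$. I would prove this from two facts specific to the Reeb field. First, $\nabla_\xi=\mathcal{L}_\xi$ on $\Sec(H)$, by vertical torsion-freeness. Second, $\xi$ is an infinitesimal affine transformation, i.e.\ $\mathcal{L}_\xi\nabla=0$: the flow $\psi^t$ of $\xi$ consists of isometries, so $(\psi^t)^*\nabla$ is again a metric, torsion-free sub-Riemannian connection and hence equals $\nabla$ by the uniqueness in Theorem~\ref{SRCon}; differentiating at $t=0$ gives $[\xi,\nabla_XW]=\nabla_{[\xi,X]}W+\nabla_X[\xi,W]$. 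Feeding both facts into $R(\xi,X)W=\nabla_\xi\nabla_XW-\nabla_X\nabla_\xi W-\nabla_{[\xi,X]}W$ and simplifying yields $R(\xi,X)W=0$, whence $R(X,\xi)=0$ and the remainder vanishes. Equivalently, one may argue on $O_{H,g}(M)$: since the lift $\hat\xi$ lies in $\Gamma=\ker\omega$ and satisfies $\mathcal{L}_{\hat\xi}\omega=0$, Cartan's formula gives $\iota_{\hat\xi}\,d\omega=0$, and as the quadratic term of the structure equation is killed by $\omega(\hat\xi)=0$ one gets $\iota_{\hat\xi}\Omega=0$.
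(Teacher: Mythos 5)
Your proof is correct, but there is nothing in the paper to compare it against directly: Proposition \ref{Bianchi} is stated without proof, immediately after the remark that (\ref{Rform}) is proved ``similarly as in the classical case,'' so the identities are implicitly asserted as horizontal analogues of the classical Bianchi identities. Your write-up supplies exactly the details the paper leaves implicit, and the two genuinely sub-Riemannian points are the ones you isolate. For the first identity, torsion-freeness reduces the cyclic sum to $\sum_{\mathrm{cyc}}\big(\nabla_X(P[Y,Z])-\nabla_{[Y,Z]}X\big)$, and your observation that each summand equals $P([X,[Y,Z]])$ --- because the vertical correction $-\alpha([Y,Z])\nabla_\xi X=-\alpha([Y,Z])[\xi,X]$ is precisely what $P$ produces when applied to $[X,\alpha([Y,Z])\xi]$ --- is right; notably, no curvature vanishing is needed for this half. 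For the second identity, the residual term $-\sum_{\mathrm{cyc}}\alpha([Y,Z])\,R(X,\xi)$ does require $R(\xi,\cdot)=0$, but this is exactly Proposition \ref{prop2} of the paper, stated and proved (via $\mathcal{L}_{\hat\xi}\omega=0$ and $\omega(\hat\xi)=0$, whence $\Omega(\hat\xi,\cdot)=0$) immediately after the Bianchi identities; since that proof is independent of the Bianchi identities, you could simply cite it with no circularity rather than re-derive it. Your alternative derivation of $R(\xi,\cdot)=0$ via affine invariance --- $(\psi^t)^*\nabla=\nabla$ by the uniqueness in Theorem \ref{SRCon}, hence $[\xi,\nabla_XW]=\nabla_{[\xi,X]}W+\nabla_X[\xi,W]$ --- is also sound (the paper itself invokes the invariance $\varphi^t_*(\nabla_XY)=\nabla_{\varphi^t_*X}\varphi^t_*Y$ in the proof of Proposition \ref{LieR}), but it is heavier machinery than needed, and your ``equivalent'' bundle computation is in fact literally the paper's proof of Proposition \ref{prop2}. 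In short: correct, complete, and structurally the natural proof; its main value is making explicit that the first identity needs only torsion-freeness and the contact property of $\xi$, while the second genuinely needs $R(\xi,\cdot)=0$.
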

Note that the above formulae cease to make sense when one of the arguments is not horizontal.

\begin{proposition}\label{prop2}
 On every special sub-Riemannian manifold $R(\xi,W) = 0$ for any $W\in\Sec(TM)$.
\end{proposition}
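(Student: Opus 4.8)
The plan is to reduce the statement to a single essential computation and then invoke the uniqueness of the canonical connection. First, since $R(Z,W)X$ is $C^{\infty}(M)$-bilinear and antisymmetric in the pair $(Z,W)$ — as is immediate from formula (\ref{Rform}) — and since every $W\in\Sec(TM)$ splits as $W = PW + \alpha(W)\xi$, we have $R(\xi,W) = R(\xi,PW) + \alpha(W)R(\xi,\xi) = R(\xi,PW)$. Thus it suffices to prove $R(\xi,Y) = 0$ for a horizontal field $Y\in\Sec(H)$.

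Next I would exploit the consequence of Lemma \ref{corVertTor}: for the canonical connection one has $\nabla_{\xi}V = P[\xi,V]$ for every $V\in\Sec(H)$, and because $\xi$ is an infinitesimal isometry, Proposition \ref{InfIsmCon} (i) gives $[\xi,V]\in\Sec(H)$, so in fact $\nabla_{\xi}V = [\xi,V]$. Substituting this into (\ref{Rform}) for $Y,X\in\Sec(H)$ yields
\[
 R(\xi,Y)X = [\xi,\nabla_Y X] - \nabla_Y[\xi,X] - \nabla_{[\xi,Y]}X,
\]
which is precisely $(\mathcal{L}_{\xi}\nabla)(Y,X)$, the value of the Lie derivative of the connection $\nabla$ along $\xi$. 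Hence the proposition is equivalent to the assertion that $\xi$ is an infinitesimal affine transformation of $\nabla$, i.e. that $\mathcal{L}_{\xi}\nabla = 0$ on horizontal fields.

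The heart of the argument, and the step I expect to be the main obstacle, is to show that the (local) flow $\psi^t$ of $\xi$ preserves the canonical connection. Here one uses that, $\xi$ being an infinitesimal isometry, each $\psi^t$ is a local isometry, and since $\psi^0 = \mathrm{id}$ the maps $\psi^t$ preserve the orientation for small $t$, whence $\psi^{t*}\alpha = \alpha$ and $\psi^t_*\xi = \xi$. Consequently the push-forward $\psi^t_*\nabla$ is again a metric, torsion-free sub-Riemannian connection, and the uniqueness part of Theorem \ref{SRCon} forces $\psi^t_*\nabla = \nabla$. Differentiating this identity at $t=0$ gives $\mathcal{L}_{\xi}\nabla = 0$, which combined with the previous paragraph completes the proof.

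Equivalently, one may argue directly on the bundle $O_{H,g}(M)$: since $\Sp\{\hat{\xi}\}\subset\Gamma = \ker\omega$ and the $\Gamma$-horizontal lift $W^*$ also lies in $\ker\omega$, the curvature form satisfies $\Omega(\hat{\xi},W^*) = d\omega(\hat{\xi},W^*) = -\omega([\hat{\xi},W^*])$; the invariance of $\Gamma$ under the lifted isometric flow $\hat{\psi}^t$ means $\mathcal{L}_{\hat{\xi}}\omega = 0$, which gives $\omega([\hat{\xi},W^*]) = 0$ and hence $\Omega(\hat{\xi},W^*) = 0$, i.e. $R(\xi,W) = 0$. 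In either formulation the decisive input is the same: the invariance of the canonical connection under the flow of $\xi$, resting on the uniqueness asserted in Theorem \ref{SRCon}.
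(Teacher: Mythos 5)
Your proposal is correct, and your closing bundle-level variant is essentially the paper's own proof: the paper takes the lift $\hat{\xi}$, uses $\mathcal{L}_{\hat{\xi}}\omega = 0$ to conclude that $[\hat{\xi},W^*]$ is $\Gamma$-horizontal, and hence that $\Omega(\hat{\xi},W^*) = 0$. The genuine difference is in where the decisive fact comes from: the paper simply quotes $\mathcal{L}_{\hat{\xi}}\omega = 0$ from \cite{GroConn}, whereas you derive the invariance of the canonical connection under the flow $\psi^t$ of $\xi$ from the uniqueness clause of Theorem \ref{SRCon}, by observing that $\psi^t_*\nabla$ is again a metric, torsion-free sub-Riemannian connection. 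That argument is sound and makes the proof self-contained modulo Theorem \ref{SRCon}; your first formulation, which uses the vanishing vertical torsion ($\nabla_{\xi}V = [\xi,V]$ for horizontal $V$) to rewrite $R(\xi,Y)X$ as $(\mathcal{L}_{\xi}\nabla)(Y,X)$ and thus recasts the proposition as the statement that $\xi$ is an infinitesimal affine transformation of $\nabla$, is a correct and conceptually pleasant downstairs version of the same idea. Two details are worth making explicit if you write this up: (i) the flow of $\xi$ is only local, so uniqueness must be applied on open subsets $U\subset M$, which are again special sub-Riemannian manifolds with Reeb field $\xi_{|U}$; and (ii) to invoke Theorem \ref{SRCon} for $\psi^t_*\nabla$ one should check that it is induced by a principal connection on $O_{H,g}(M)$ --- most cleanly, $\psi^t_*\nabla$ is the connection induced by $\hat{\psi}^t_*\Gamma$, where $\hat{\psi}^t$ is the lifted flow (\ref{FlowLift}), and the torsion-freeness of the push-forward uses that $\psi^t$ commutes with the projection $P$, which holds because $\psi^t_*\xi = \xi$.
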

\begin{proof}
 Let $\hat{\xi}$ be the $\Gamma$-horizontal lift of the Reeb vector field $\xi$ as defined above. Moreover let $W^*$ be the $\Gamma$-horizontal lift of $W$. According to \cite{GroConn} we know that $\mathcal{L}_{\hat{\xi}}\omega = 0$. Then 
 \begin{align*}
  0 = (\mathcal{L}_{\hat{\xi}}\omega)(W^*) = \hat{\xi}(\omega(W^*)) - \omega([\hat{\xi},W^*])
 \end{align*}
from which $[\hat{\xi},W^*]$ is again $\Gamma$-horizontal. As a consequence $\Omega(\hat{\xi},W^*) = 0$ which ends the proof.
\end{proof}
By the way we see that $[\hat{\xi},\Sec(\Gamma)]\subset\Sec(\Gamma)$.
We note one more result which we will need below.
\begin{proposition}\label{skewR}
 For every $Z,W\in\Sec(TM)$ the operator $R(Z,W)$ is skew-symmetric with respect to $g$, i.e., $g(R(Z,W)X,Y) + g(X,R(Z,W)Y) = 0$ for all $X,Y\in\Sec(H)$
\end{proposition}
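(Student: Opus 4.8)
The plan is to exploit the fact that $\Gamma$ is a connection on the \emph{orthonormal} frame bundle $O_{H,g}(M)$, so that its curvature form is $o(2n)$-valued, and then to transport this matrix skew-symmetry through the frame $l$ into skew-symmetry with respect to $g$. The whole argument is pointwise.

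First I would fix $q\in M$ and a frame $l\in\pi^{-1}(q)\subset O_{H,g}(M)$. The decisive feature is that, since $l$ is an orthonormal frame, the associated isomorphism $l\colon\R^{2n}\longrightarrow H_q$ is a linear isometry: writing $\langle\cdot,\cdot\rangle$ for the standard inner product on $\R^{2n}$ we have $g(l(u),l(v)) = \langle u,v\rangle$ for all $u,v\in\R^{2n}$. Using the defining formula $(R(Z,W)X)(q) = l(\Omega_l(Z^*,W^*)(l^{-1}X))$ together with $Y(q) = l(l^{-1}Y)$, I would compute
\begin{align*}
 g(R(Z,W)X,Y)(q) &= g\big(l(\Omega_l(Z^*,W^*)\,l^{-1}X),\, l(l^{-1}Y)\big) \\
 &= \langle \Omega_l(Z^*,W^*)\,l^{-1}X,\; l^{-1}Y\rangle,
\end{align*}
where $l^{-1}X$ denotes the coordinate vector $l^{-1}(X(q))\in\R^{2n}$, and similarly for $Y$.

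Now comes the key step: because $\Gamma$ is a connection on $O_{H,g}(M)$, its connection form $\omega$ is $o(2n)$-valued, hence so is the curvature form $\Omega = d\omega\circ(p_{\Gamma},p_{\Gamma})$; in particular the endomorphism $A := \Omega_l(Z^*,W^*)$ is a skew-symmetric matrix, so $\langle Au,v\rangle = -\langle u,Av\rangle$. Applying this with $u = l^{-1}X$, $v = l^{-1}Y$ and reading the computation above backwards yields $g(R(Z,W)X,Y) = -g(X,R(Z,W)Y)$ at $q$; since $q$ is arbitrary, this holds on all of $M$, which is the assertion.

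I do not expect any serious obstacle here; the single point that must be handled with care is the orthonormality of $l$, which is precisely what distinguishes $O_{H,g}(M)$ from the full bundle $L_H(M)$ and what permits passage from matrix skew-symmetry in $o(2n)$ to $g$-skew-symmetry of $R(Z,W)$. As an alternative, purely tensorial route avoiding the frame bundle, one could instead use the metric identity (\ref{MetrCon}) together with formula (\ref{Rform}): differentiating $W(g(X,Y)) = g(\nabla_WX,Y) + g(X,\nabla_WY)$ once more along $Z$, subtracting the same expression with $Z$ and $W$ interchanged, and invoking $[Z,W](g(X,Y)) = g(\nabla_{[Z,W]}X,Y) + g(X,\nabla_{[Z,W]}Y)$, one finds that all first-order cross terms cancel and exactly $g(R(Z,W)X,Y) + g(X,R(Z,W)Y) = 0$ remains. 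Here one uses throughout that $\nabla_WX\in\Sec(H)$, so every inner product appearing stays within $H$ and is thus well defined even when $Z$ or $W$ is not horizontal.
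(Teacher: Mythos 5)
Your proposal is correct and is essentially the paper's own proof: the paper simply says ``the proof is exactly the same as in the classical case,'' and your primary argument (the curvature form $\Omega$ is $o(2n)$-valued on $O_{H,g}(M)$ and the orthonormal frame $l$ transports matrix skew-symmetry to $g$-skew-symmetry) is precisely that classical frame-bundle argument, spelled out in detail. Your alternative tensorial route via differentiating (\ref{MetrCon}) and using (\ref{Rform}) is the other standard form of the same classical proof and is equally valid here, since (\ref{MetrCon}) holds for arbitrary, not necessarily horizontal, differentiating directions.
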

\begin{proof}
 The proof is exactly the same as in the classical case.
\end{proof}

Denote by $\mathfrak{H}$ the algebra of all horizontal mixed tensor fields on $M$. It means that elements of $\mathfrak{H}$ are precisely the sections of the bundles 
\begin{align*}
 \underbrace{H\otimes\dots\otimes H}_{i\; \mathrm{factors}} \otimes \underbrace{H^*\otimes\dots\otimes H^*}_{j\;\mathrm{factors}}\longrightarrow M \text{,}
\end{align*}
$i,j\geq 0$.
It follows from the proposition above that the value of $R$ at a point $q\in M$ is entirely determined by its component in $\bigwedge^2H_q^*\otimes H_q^*\otimes H_q$. Hence we may regard $R$ as an element of $\mathfrak{H}$. Let us assume the usual definition of the covariant differential, namely $\nabla R(Z;X,Y) = (\nabla_ZR)(X,Y)$. Although $\nabla R$ is well defined as a section of the bundle $T^*M\otimes\bigwedge^2H^*\otimes H^*\otimes H\longrightarrow M$, some care is needed when we want to consider higher-order differentials $\nabla^iR$, $i=1,2,\dots$ It turns out that $\nabla^iR$, $i\geq 2$, makes sense only when we apply to it horizontal vector fields (cf. the formula in \cite[Proposition 2.12, p.125]{KoNo}).
Therefore, in order to make sure that everything works, in the sequel \textbf{we will consider $\boldsymbol{\nabla^iR$, $i=0,1,\dots}$, as elements of $\mathfrak{H}$} (i.e., ignoring non-horizontal components). The same remark may be applied to $\nabla^id\alpha$, $i=0,1,\dots$ Then, for every positive integer $i$ and $Z\in\Sec(TM)$, expressions like $\nabla_Z(\nabla^iR)$, $\nabla_Z(\nabla^id\alpha)$ or $\mathcal{L}_Z(\nabla^iR)$, $\mathcal{L}_Z(\nabla^id\alpha)$, with $Z$ being a contact vector field in the two latter cases, are well defined elements of $\mathfrak{H}$.

\begin{proposition}\label{LieR}
 For every infinitesimal isometry $Z$, $\mathcal{L}_Z(\nabla^iR) = 0$ and $\mathcal{L}_Z(\nabla^id\alpha) = 0$, $i=0,2,\dots$
\end{proposition}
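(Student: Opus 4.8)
The plan is to reduce the statement $\mathcal{L}_Z(\nabla^iR)=0$ and $\mathcal{L}_Z(\nabla^id\alpha)=0$ to the base cases $i=0$ and then propagate by induction on $i$. First I would handle $i=0$: I must show $\mathcal{L}_ZR=0$ and $\mathcal{L}_Z d\alpha=0$ for an infinitesimal isometry $Z$. The statement $\mathcal{L}_Z\alpha=0$ is exactly Proposition~\ref{PropIsom}(\ref{a2}), and since the Lie derivative commutes with the exterior derivative, $\mathcal{L}_Z d\alpha = d(\mathcal{L}_Z\alpha)=0$ follows at once (restricting to horizontal arguments, which is how $d\alpha$ is being regarded as an element of $\mathfrak{H}$). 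For $\mathcal{L}_ZR=0$ I would use the lift $\hat Z$ of $Z$ to $O_{H,g}(M)$ described before Lemma~\ref{corVertTor}: since $Z$ is an isometry, $\hat Z$ satisfies $\mathcal{L}_{\hat Z}\theta=0$ and $\mathcal{L}_{\hat Z}\eta=0$, and because the connection from Theorem~\ref{SRCon} is canonically determined by the structure it is preserved by the flow of $Z$, so $\mathcal{L}_{\hat Z}\omega=0$. Applying $\mathcal{L}_{\hat Z}$ to the defining relation $\Omega=d\omega\circ(p_\Gamma,p_\Gamma)$ and using invariance of $\Gamma$ gives $\mathcal{L}_{\hat Z}\Omega=0$, which translates downstairs into $\mathcal{L}_ZR=0$ via the formula $(R(Z,W)X)(q)=l(\Omega_l(Z^*,W^*)(l^{-1}X))$.

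The cleaner route to the base case, avoiding the frame bundle entirely, is to note that $Z$ being a Killing field means its flow $\psi^t$ consists of local isometries, each of which preserves the canonical connection $\nabla$ (again by uniqueness in Theorem~\ref{SRCon}) and hence its curvature tensor $R$; differentiating $(\psi^t)^*R=R$ at $t=0$ yields $\mathcal{L}_ZR=0$, and likewise $(\psi^t)^*\alpha=\pm\alpha$ forces $(\psi^t)^*d\alpha=\pm d\alpha$, so differentiating gives $\mathcal{L}_Z d\alpha=0$. I would present this flow-based argument as the main line because it is conceptually transparent and sidesteps the subtleties of working on $O_{H,g}(M)$.

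For the inductive step I would use the fact that an infinitesimal isometry commutes with covariant differentiation in the following sense: for any horizontal tensor $S\in\mathfrak{H}$ one has the commutation identity $\mathcal{L}_Z(\nabla S)=\nabla(\mathcal{L}_Z S)$, valid because $Z$ preserves $\nabla$. Concretely, since the flow of $Z$ preserves $\nabla$, applying $\mathcal{L}_Z$ and $\nabla$ in either order gives the same result, so if $\mathcal{L}_Z(\nabla^iR)=0$ then $\mathcal{L}_Z(\nabla^{i+1}R)=\nabla(\mathcal{L}_Z(\nabla^iR))=0$, and identically for $\nabla^id\alpha$. Here I would lean on the care taken in the paragraph preceding the proposition: because we always regard $\nabla^iR$ and $\nabla^id\alpha$ as elements of $\mathfrak{H}$ (ignoring non-horizontal components) and because $Z$ is contact so that $\mathcal{L}_Z$ of a horizontal tensor is again horizontal, all the expressions $\mathcal{L}_Z(\nabla^iR)$ are genuinely well defined and the commutation is legitimate.

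The main obstacle I anticipate is justifying the commutation identity $\mathcal{L}_Z\circ\nabla=\nabla\circ\mathcal{L}_Z$ rigorously in this sub-Riemannian setting, where $\nabla$ only differentiates horizontal tensors and where the higher differentials $\nabla^iR$ are, by the convention adopted above, only partially defined. The clean way to see it is infinitesimally from $(\psi^t)^*\nabla=\nabla$ (the flow preserves the canonical connection), but I would need to verify carefully that pulling back by $\psi^t$ interacts correctly with the projection $P:TM\to H$ and the truncation to horizontal components, i.e.\ that preserving $\nabla$, $g$, and $\alpha$ guarantees the flow preserves the entire bookkeeping by which $\nabla^iR$ is defined. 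Once that compatibility is checked, the induction is immediate and the whole proposition follows.
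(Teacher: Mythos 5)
Your proposal is correct and follows essentially the same route as the paper: the paper's proof also works with the flow $\varphi^t$ of $Z$, establishes that it preserves the canonical connection (via $\varphi^t_*(\nabla_XY)=\nabla_{\varphi^t_*X}\varphi^t_*Y$), deduces $\mathcal{L}_ZR=0$ from the curvature formula (\ref{Rform}) and $\mathcal{L}_Zd\alpha=0$ from $\mathcal{L}_Z\alpha=0$, and then inducts exactly as you do. The only cosmetic difference is that the paper justifies invariance of $\nabla$ by ``direct calculation from the definition of $\nabla$'' whereas you invoke the uniqueness clause of Theorem~\ref{SRCon}; both are legitimate, and your flow-based main line (rather than the frame-bundle variant) is the one the paper actually uses.
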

\begin{proof}
Let $\varphi^t$ be the (local) flow of $Z$. The result follows from (\ref{Rform}) and from the relation $\varphi^t_*(\nabla_XY) = \nabla_{\varphi^t_*X}\varphi^t_*Y$. This last formula can be obtained by direct calculations using the definition of $\nabla$. Then we proceed by induction: $\mathcal{L}_Z(\nabla^iR) = 0$ implies $\mathcal{L}_Z(\nabla^{i+1}R) = 0$.

For the second formula we note that $\mathcal{L}_Z\alpha = 0$ (see Proposition \ref{PropIsom}) and again proceed by induction.
\end{proof}

\section{Prolongation of infinitesimal isometries}\label{SecProl}
In this section we adapt some notions and definitions from the classical differential geometry to the setting of special sub-Riemannian manifolds and investigate their properties. Then, using the obtained results and  making use of the sub-Riemannian connection, we generalize some results obtained in \cite{Nom} to the sub-Riemannian situation.
\subsection{Operator $A_Z$}\label{subSecAZ}
Fix a special sub-Riemannian manifold $(M,H,g)$, $\dim M = 2n+1$. Recall that $P\colon TM\longrightarrow H$ is the projection determined the splitting $TM = H\oplus \Sp\{\xi\}$. Let $\Gamma$ be the sub-Riemannian connection from Theorem \ref{SRCon} and denote by $\nabla$ the corresponding covariant differentiation.  For a contact vector field $Z$ we define the operator $\widetilde{A}_Z\colon \Sec(TM)\longrightarrow\Sec(H)$, linear over $C^{\infty}(M)$, by formula
\begin{align*}
 \widetilde{A}_Z = P\circ\mathcal{L}_Z - \nabla_Z\circ P\text{.}
\end{align*}
$\widetilde{A}_Z$ can also be regarded as a bundle morphism $\widetilde{A}_Z:TM\longrightarrow H$ covering the identity. The restriction of $\widetilde{A}_Z$ to $\Sec(H)$ (respectively, to $H$) will be denoted by $A_Z$. So we have
\begin{align} \label{OPAZ}
 A_Z = \mathcal{L}_Z - \nabla_Z\colon \Sec(H)\longrightarrow\Sec(H)
\end{align}
or, respectively, $A_Z\colon H\longrightarrow H$ as a bundle morphism. Remark that if $A$ is an infinitesimal isometry then $\widetilde{A}_Z$ is $0$ on $\Sp\{\xi\}$. Thus on every special sub-Riemannian manifold the operator $\widetilde{A}_Z$, where $Z$ is an infinitesimal isometry, can be viewed as the trivial (i.e. zero) extension of $A_Z$, where $A_Z$ is given by (\ref{OPAZ}). Therefore, in the sequel, we will not use the tilde symbol, agreeing that $A_Z\xi = 0$. The reason why we consider the value of $A_Z$ on $\Sp\{\xi\}$ is motivated by the statement of Proposition \ref{propEqs} below and its applications, for instant Theorem \ref{th1} whose proof significantly simplifies if we are allowed to use all curves, not only horizontal ones.

For a point $q\in M$ we will write $A_Z(q):H_q\longrightarrow H_q$. The value assumed on a vector $v\in H_q$ will be written as $A_Z(q)v$.

The operator $A_Z$ has many important properties. First of all, it is easy to verify the following proposition (c.f. Proposition \ref{InfIsmCon} and formula (\ref{MetrCon})).
\begin{proposition}\label{prop1}
 Suppose that $Z\in\Sec(TM)$ is a contact vector field. Then $Z$ is an infinitesimal isometry if and only if $A_Z$ is a skew-symmetric operator with respect to $g$, i.e., $g(A_ZX,Y) + g(X,A_ZY) = 0$ for all $X,Y\in\Sec(H)$. 
\end{proposition}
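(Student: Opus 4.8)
The plan is to reduce the statement to a one-line computation by combining the two characterizations already available: Proposition \ref{InfIsmCon}, which describes infinitesimal isometries, and the metric property (\ref{MetrCon}) of the sub-Riemannian connection $\nabla$. So I would fix a contact vector field $Z$ together with two horizontal fields $X,Y\in\Sec(H)$ and argue for the two implications simultaneously.

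First I would observe that, since $Z$ is assumed to be contact, condition $(i)$ of Proposition \ref{InfIsmCon} holds automatically, so that $Z$ is an infinitesimal isometry if and only if condition $(ii)$ holds, namely $Z(g(X,Y)) = g([Z,X],Y) + g(X,[Z,Y])$ for all $X,Y\in\Sec(H)$. The contact hypothesis also guarantees $[Z,X],[Z,Y]\in\Sec(H)$, so that $P[Z,X]=[Z,X]$, and hence, on $\Sec(H)$, the operator of (\ref{OPAZ}) reads $A_ZX = \mathcal{L}_ZX - \nabla_ZX = [Z,X] - \nabla_ZX$, and likewise for $Y$. I would then rewrite the left-hand side of $(ii)$ by the metric identity (\ref{MetrCon}), $Z(g(X,Y)) = g(\nabla_ZX,Y) + g(X,\nabla_ZY)$, and bring every term to one side. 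This turns $(ii)$ into
\[
 g\bigl([Z,X]-\nabla_ZX,\,Y\bigr) + g\bigl(X,\,[Z,Y]-\nabla_ZY\bigr) = 0,
\]
which is precisely $g(A_ZX,Y) + g(X,A_ZY) = 0$. Reading this chain of equivalent identities in both directions yields the claim: for a contact field $Z$, property $(ii)$ is equivalent to the skew-symmetry of $A_Z$.

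I do not expect any genuine obstacle here, as the content is entirely formal. The only point deserving care is the role of the projection $P$ hidden in the definition of $\widetilde{A}_Z$: one must invoke the contact hypothesis to know that $[Z,X]$ is already horizontal, so that the restriction $A_Z=\mathcal{L}_Z-\nabla_Z$ on $\Sec(H)$ indeed coincides with $P\circ\mathcal{L}_Z-\nabla_Z\circ P$ and both entries of $g(A_ZX,Y)$ lie in $H$. Finally, since $A_Z$ is $C^{\infty}(M)$-linear (a bundle morphism), the tensorial skew-symmetry just derived is the same as pointwise skew-symmetry of each $A_Z(q)\colon H_q\longrightarrow H_q$, matching the formulation of the statement.
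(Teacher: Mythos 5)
Your proof is correct and is exactly the argument the paper intends: it reduces the statement to condition $(ii)$ of Proposition~\ref{InfIsmCon} (condition $(i)$ being the contact hypothesis) and then subtracts the metric identity (\ref{MetrCon}) to identify the difference with $g(A_ZX,Y)+g(X,A_ZY)$. The paper leaves this verification to the reader, citing precisely these two ingredients, so your write-up fills in the same computation with no deviation in method.
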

\begin{corollary}\label{cor1}
 Let $Z$ be a contact vector field. If $\nabla_{\xi}A_Z = 0$, then $[Z,\xi]$ is an infinitesimal isometry.
\end{corollary}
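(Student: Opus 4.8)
The plan is to reduce the claim to the skew-symmetry criterion of Proposition~\ref{prop1}. First I would observe that $[Z,\xi]$ is again a contact vector field: contact vector fields are closed under the Lie bracket, $Z$ is contact by hypothesis, and $\xi$ is contact since on a special manifold $\xi$ is an infinitesimal isometry (Proposition~\ref{InfIsmCon}(i)). Hence the operator $A_{[Z,\xi]}$ is defined, and by Proposition~\ref{prop1} it suffices to prove that $A_{[Z,\xi]}$ is skew-symmetric. In fact I claim the stronger identity
\[
 A_{[Z,\xi]} = -\nabla_{\xi}A_Z \text{,}
\]
which holds for every contact $Z$; under the hypothesis $\nabla_{\xi}A_Z = 0$ it forces $A_{[Z,\xi]} = 0$, and the zero operator is trivially skew-symmetric, so $[Z,\xi]$ is an infinitesimal isometry.

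\textbf{Proving the identity.} To establish the displayed identity I would fix $X\in\Sec(H)$ and expand both sides. Unwinding the covariant differential of the operator, $(\nabla_{\xi}A_Z)X = \nabla_{\xi}(A_Z X) - A_Z(\nabla_{\xi}X)$, and substituting $A_Z = \mathcal{L}_Z - \nabla_Z = [Z,\cdot\,] - \nabla_Z$ on horizontal fields produces four terms. Here I use the vanishing vertical torsion (Lemma~\ref{corVertTor}), which gives $\nabla_{\xi}Y = [\xi,Y]$ for every horizontal $Y$; applying this to $Y = [Z,X]$ and to $Y = \nabla_{\xi}X = [\xi,X]$ converts the two outermost $\nabla_{\xi}$'s into brackets with $\xi$. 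The result is $(\nabla_{\xi}A_Z)X = \bigl([\xi,[Z,X]] - [Z,[\xi,X]]\bigr) + \bigl(\nabla_Z\nabla_{\xi}X - \nabla_{\xi}\nabla_Z X\bigr)$.

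\textbf{Matching the two pieces.} The two grouped terms are handled by two inputs. The Jacobi identity rewrites the double-bracket combination as $[\xi,[Z,X]] - [Z,[\xi,X]] = -[[Z,\xi],X] = -\mathcal{L}_{[Z,\xi]}X$. For the second-order part I invoke the curvature formula~(\ref{Rform}) for $Z$ and $\xi$, together with $R(Z,\xi) = -R(\xi,Z) = 0$ on a special manifold (Proposition~\ref{prop2} and the antisymmetry of $R$ in its first two slots), to get $\nabla_Z\nabla_{\xi}X - \nabla_{\xi}\nabla_Z X = \nabla_{[Z,\xi]}X$. Collecting, $(\nabla_{\xi}A_Z)X = -\mathcal{L}_{[Z,\xi]}X + \nabla_{[Z,\xi]}X = -\bigl(\mathcal{L}_{[Z,\xi]} - \nabla_{[Z,\xi]}\bigr)X = -A_{[Z,\xi]}X$, which is exactly the claimed identity.

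\textbf{Main obstacle.} The difficulty is entirely organizational rather than analytic: one must keep precise track of which occurrences of $\nabla_{\xi}$ may legitimately be replaced by $[\xi,\cdot\,]$ (only those acting on horizontal fields, which is where the vanishing vertical torsion enters) and apply the Jacobi identity with the correct sign, so that the surviving second-order terms assemble precisely into the curvature expression annihilated by Proposition~\ref{prop2}. The conceptual point is that the special hypothesis is used exactly twice---once as $\nabla_{\xi}Y = [\xi,Y]$ and once as $R(\xi,\cdot\,) = 0$---and it is these two facts that make the identity close.
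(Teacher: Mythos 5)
Your proof is correct and follows essentially the same route as the paper: both expand $(\nabla_{\xi}A_Z)X$ using the vanishing vertical torsion, the curvature formula (\ref{Rform}) together with $R(\xi,\cdot)=0$ from Proposition \ref{prop2}, and the Jacobi identity, arriving at the identity $A_{[Z,\xi]} = -\nabla_{\xi}A_Z$ (the paper states this pointwise as $\nabla_{[Z,\xi]}X = [[Z,\xi],X]$, i.e.\ $A_{[Z,\xi]}=0$) and then invoking Proposition \ref{prop1}. Your explicit verification that $[Z,\xi]$ is a contact vector field is a point the paper leaves implicit, but it does not change the argument.
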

\begin{proof}
 For every $X\in\Sec(H)$ we have
 \begin{gather*}
  (\nabla_{\xi}A_Z)X = \nabla_{\xi}(A_ZX) - A_Z([\xi,X]) = [\xi,[Z,X]] - \nabla_{\xi}\nabla_{Z}X - [Z,[\xi,X]] + \nabla_Z\nabla_{\xi}X=\\
  R(Z,\xi)X + \nabla_{[Z,\xi]}X + [\xi,[Z,X]] - [Z,[\xi,X]] = \nabla_{[Z,\xi]}X - [[Z,\xi],X] = 0 \text{,}
 \end{gather*}
 where we used Proposition \ref{prop2} and the Jacobi identity. As a result we see that $\nabla_{[Z,\xi]}X = [[Z,\xi],X]$ for every $X\in\Sec(H)$ which means that $A_{[Z,\xi]} = 0$. Thereby Proposition \ref{prop1} applies.
\end{proof}
Using Proposition \ref{PropIsom} and the computations from the proof of Corollary \ref{cor1} we immediately obtain
\begin{corollary}\label{cor3}
 If $Z$ is an infinitesimal isometry then $\nabla_{\xi}A_Z = 0$.
\end{corollary}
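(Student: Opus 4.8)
The plan is to reuse the tensorial identity already established inside the proof of Corollary \ref{cor1} and then specialize it using the fact that $\xi$ commutes with infinitesimal isometries. First I would observe that an infinitesimal isometry $Z$ is in particular a contact vector field, so the chain of equalities computed in Corollary \ref{cor1} applies verbatim. Reading that computation off its final two lines shows that for every $X\in\Sec(H)$
\begin{align*}
 (\nabla_{\xi}A_Z)X = \nabla_{[Z,\xi]}X - [[Z,\xi],X],
\end{align*}
where the essential inputs are the vanishing $R(Z,\xi)=0$ from Proposition \ref{prop2}, the relation $\nabla_{\xi}X = [\xi,X]$ (valid because $\xi$ is an infinitesimal isometry, so by Lemma \ref{corVertTor} the vertical torsion vanishes and $[\xi,X]$ stays horizontal), formula (\ref{Rform}) for the curvature, and the Jacobi identity.

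The second and decisive step is to feed in Proposition \ref{PropIsom}(\ref{a1}), which asserts that $[\xi,Z]=0$ for every infinitesimal isometry $Z$, equivalently $[Z,\xi]=0$. Substituting this into the displayed identity kills both terms on the right-hand side simultaneously: $\nabla_{[Z,\xi]}X = 0$ and $[[Z,\xi],X]=0$. Hence $(\nabla_{\xi}A_Z)X = 0$ for all horizontal $X$, which is exactly the assertion $\nabla_{\xi}A_Z = 0$.

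I do not expect any genuine obstacle here: the analytic content is entirely contained in the already-carried-out computation of Corollary \ref{cor1}, and the only new ingredient is the commutation relation $[\xi,Z]=0$. The one point requiring a moment of care is to confirm that the identity from Corollary \ref{cor1} was derived for a general contact vector field and did not secretly presuppose $\nabla_{\xi}A_Z = 0$; inspection of that proof shows it did not, since the hypothesis $\nabla_{\xi}A_Z=0$ was invoked only \emph{afterwards} to conclude $A_{[Z,\xi]}=0$. Thus the identity is available unconditionally, and Corollary \ref{cor3} follows at once.
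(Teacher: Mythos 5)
Your proposal is correct and is exactly the paper's own argument: the paper derives Corollary \ref{cor3} by combining the unconditional identity $(\nabla_{\xi}A_Z)X = \nabla_{[Z,\xi]}X - [[Z,\xi],X]$ extracted from the computation in the proof of Corollary \ref{cor1} with the commutation relation $[\xi,Z]=0$ from Proposition \ref{PropIsom}. Your added check that the identity in Corollary \ref{cor1} does not presuppose $\nabla_{\xi}A_Z=0$ is the right point of care, and it holds as you say.
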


Further we generalize Proposition \ref{prop1}.

\begin{proposition}\label{prop4}
 If $Z$ is an infinitesimal isometry then $\nabla_VA_Z$ is skew-symmetric with respect to $g$ for every $V\in\Sec(TM)$.
\end{proposition}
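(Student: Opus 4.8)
The plan is to differentiate the skew-symmetry relation for $A_Z$ and to let the metric compatibility of $\nabla$ do the work. Since $Z$ is an infinitesimal isometry it is, in particular, a contact vector field, so by Proposition \ref{prop1} the operator $A_Z$ is skew-symmetric, i.e.
\[
 g(A_ZX,Y) + g(X,A_ZY) = 0
\]
for all $X,Y\in\Sec(H)$. The whole point is that differentiating this identity with a metric connection should transfer skew-symmetry onto the covariant derivative $\nabla_VA_Z$, which is defined by the Leibniz rule $(\nabla_VA_Z)X = \nabla_V(A_ZX) - A_Z(\nabla_VX)$.

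Concretely, I would fix $V\in\Sec(TM)$ and $X,Y\in\Sec(H)$ and apply $V$ to the identity above. As its right-hand side is identically zero, this yields $0 = V(g(A_ZX,Y)) + V(g(X,A_ZY))$. I would then expand each term by the metric property (\ref{MetrCon}), which holds for an arbitrary $V\in\Sec(TM)$, producing four summands $g(\nabla_V(A_ZX),Y)$, $g(A_ZX,\nabla_VY)$, $g(\nabla_VX,A_ZY)$ and $g(X,\nabla_V(A_ZY))$.

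Next I would apply the skew-symmetry of $A_Z$ to the two ``mixed'' summands, rewriting $g(A_ZX,\nabla_VY) = -g(X,A_Z\nabla_VY)$ and $g(\nabla_VX,A_ZY) = -g(A_Z\nabla_VX,Y)$. Collecting together the terms with $Y$ in the second slot and those with $X$ in the first slot, the Leibniz rule recombines them exactly into
\[
 0 = g((\nabla_VA_Z)X,Y) + g(X,(\nabla_VA_Z)Y),
\]
which is precisely the asserted skew-symmetry of $\nabla_VA_Z$.

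There is no genuinely hard step: the argument is a routine consequence of metric compatibility together with the Leibniz rule. The only points worth flagging are that $V$ need \emph{not} be horizontal, since (\ref{MetrCon}) is valid for every $V\in\Sec(TM)$, and that $A_Z$, being a horizontal $(1,1)$-tensor, has a well-defined single covariant derivative $\nabla_VA_Z$ as an element of $\mathfrak{H}$. The same computation, iterated, would show that each higher covariant derivative $\nabla^kA_Z$ is skew-symmetric as well, but only the first-order case is needed here.
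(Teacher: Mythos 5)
Your proof is correct and follows essentially the same route as the paper: differentiate the skew-symmetry identity $g(A_ZX,Y)+g(X,A_ZY)=0$ in the direction $V$ using metric compatibility (\ref{MetrCon}), then combine the Leibniz rule for $\nabla_VA_Z$ with the skew-symmetry of $A_Z$ (Proposition \ref{prop1}) to isolate $g((\nabla_VA_Z)X,Y)+g(X,(\nabla_VA_Z)Y)=0$. The only difference is cosmetic — you cancel the mixed terms before recombining via the Leibniz rule, whereas the paper expands first and cancels last — and your remarks that $V$ need not be horizontal and that $A_Z$ is a horizontal $(1,1)$-tensor match the paper's setting.
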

\begin{proof}
 By assumption the operator $A_Z$ is skew-symmetric. Let us differentiate the equation
 \[
  g(A_ZX,Y) + g(X,A_ZY) = 0\text{,}
 \]
$X,Y\in\Sec(H)$, in the direction $V$. As a result we have
\begin{gather*}
 g(\nabla_V(A_ZX),Y) + g(A_ZX,\nabla_VY) + g(\nabla_VX,A_ZY) + g(X,\nabla_V(A_ZY)) = 0
\end{gather*}
which after computations gives
\begin{gather*}
 0 = g((\nabla_VA_Z)X,Y) + g(X,(\nabla_VA_Z)Y) + \\
 g(A_Z(\nabla_VX),Y) + g(\nabla_VX,A_ZY) + g(A_ZX,\nabla_VY) + g(X,A_Z(\nabla_VY))\text{.}
\end{gather*}
This concludes the proof since by Proposition \ref{prop1} the last four terms add up to zero.
\end{proof}

Before we come to the key result in this section, we will prove the following lemma.
\begin{lemma}\label{lem1}
 Suppose that $Z\in\Sec(TM)$ is an infinitesimal isometry. Then for every $X,Y\in\Sec(H)$
 \[
  [Z,P[X,Y]] = P[Z,[X,Y]]\text{.} 
 \]
\end{lemma}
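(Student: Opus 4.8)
The plan is to exploit the splitting $TM = H\oplus\Sp\{\xi\}$ to decompose $[X,Y]$ into its horizontal and Reeb components, and then to track how the Lie bracket with $Z$ interacts with each component. The two facts that make everything work are that an infinitesimal isometry sends horizontal fields to horizontal fields (Proposition \ref{InfIsmCon}(i)) and that it commutes with the Reeb field (Proposition \ref{PropIsom}, item \ref{a1}). With these in hand the statement reduces to a short direct computation rather than anything delicate.

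Concretely, I would begin by writing, for arbitrary $X,Y\in\Sec(H)$,
\[
 [X,Y] = P[X,Y] + \alpha([X,Y])\xi\text{,}
\]
which is simply the decomposition of $[X,Y]$ relative to $TM = H\oplus\Sp\{\xi\}$, using that the $\xi$-component of any vector field $W$ equals $\alpha(W)\xi$ (since $\alpha|_H = 0$ and $\alpha(\xi)=1$). Setting $f = \alpha([X,Y])\in C^{\infty}$, I would then apply $[Z,\cdot\,]$ and use the Leibniz rule on the second summand:
\[
 [Z,f\xi] = Z(f)\xi + f[Z,\xi]\text{.}
\]
Here the crucial input is Proposition \ref{PropIsom}(\ref{a1}), which gives $[\xi,Z]=0$ and hence $[Z,\xi]=0$; the term $f[Z,\xi]$ therefore vanishes, leaving $[Z,f\xi] = Z(f)\xi$. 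Collecting the pieces yields
\[
 [Z,[X,Y]] = [Z,P[X,Y]] + Z(f)\xi\text{.}
\]

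To finish I would apply the projection $P$ to this identity. Since $P\xi = 0$, the Reeb term disappears and we obtain $P[Z,[X,Y]] = P[Z,P[X,Y]]$. It then remains to remove the projection from the right-hand side, and this is exactly where the isometry hypothesis enters once more: because $P[X,Y]$ is a horizontal field and $Z$ is an infinitesimal isometry, Proposition \ref{InfIsmCon}(i) guarantees that $[Z,P[X,Y]]$ is already horizontal, so $P[Z,P[X,Y]] = [Z,P[X,Y]]$. Combining the two equalities gives $[Z,P[X,Y]] = P[Z,[X,Y]]$, as desired. There is no genuine obstacle in this argument; the only point requiring care is to invoke both defining properties of an infinitesimal isometry (commuting with $\xi$ and preserving horizontality) at the right moments, since each disposes of one of the two non-horizontal contributions that would otherwise obstruct the identity.
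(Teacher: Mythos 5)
Your proof is correct and follows essentially the same route as the paper's: decompose $[X,Y]$ into horizontal and Reeb parts, use $[Z,\xi]=0$ to kill the bracket with the $\xi$-component, and apply $P$, relying on the horizontality of $[Z,P[X,Y]]$ (the contact property of an infinitesimal isometry) to remove the projection on the right-hand side. Your write-up is merely more explicit in identifying the coefficient as $\alpha([X,Y])$ and in flagging where each hypothesis is used.
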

\begin{proof}
 We have $[X,Y] = P[X,Y] + h\xi$ for a smooth function $h$. Remembering that $\xi$ commutes with infinitesimal isometries we have
 \[
  [Z,[X,Y]] = [Z,P[X,Y]] + Z(h)\xi\text{.}
 \]
 Since $[Z,P[X,Y]]\in \Sec(H)$, we end the proof by applying $P$ to both sides of the above equality. 
\end{proof}

\begin{proposition}\label{prop3}
 Suppose that $Z$ is an infinitesimal isometry. Then 
 \[
 \nabla_VA_Z = R(Z,V) 
 \]
for every $V\in\Sec(TM)$.
\end{proposition}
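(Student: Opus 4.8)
The plan is to prove the identity $\nabla_V A_Z = R(Z,V)$ by establishing it as an equality of operators on $\Sec(H)$, i.e. by applying both sides to an arbitrary horizontal field $X \in \Sec(H)$ and showing $(\nabla_V A_Z)X = R(Z,V)X$. Since both sides are $C^\infty(M)$-linear in $V$ and the identity is local, I would reduce to the two cases $V = W \in \Sec(H)$ and $V = \xi$ separately, and then combine them using the splitting $TM = H \oplus \Sp\{\xi\}$.

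First I would unwind the left-hand side. By definition of the covariant differential, $(\nabla_V A_Z)X = \nabla_V(A_Z X) - A_Z(\nabla_V X)$. Recalling that $A_Z = \mathcal{L}_Z - \nabla_Z$ on $\Sec(H)$, i.e. $A_Z X = [Z,X] - \nabla_Z X$ (using $\mathcal{L}_Z X = [Z,X]$), I would substitute to get
\begin{align*}
 (\nabla_V A_Z)X = \nabla_V[Z,X] - \nabla_V \nabla_Z X - A_Z(\nabla_V X).
\end{align*}
The term $A_Z(\nabla_V X) = [Z, \nabla_V X] - \nabla_Z \nabla_V X$, so after substitution the two $\nabla\nabla$ terms combine into $-\nabla_V\nabla_Z X + \nabla_Z\nabla_V X$, which is the beginning of a curvature expression. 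The remaining bracket terms are $\nabla_V[Z,X] - [Z,\nabla_V X]$.

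The heart of the computation is the case $V = W \in \Sec(H)$. Here the curvature formula (\ref{Rform}) gives $R(Z,W)X = \nabla_Z\nabla_W X - \nabla_W\nabla_Z X - \nabla_{[Z,W]}X$, so I need to show that the leftover bracket terms equal $\nabla_{[Z,W]}X + R(Z,W)X - (\nabla_Z\nabla_W X - \nabla_W\nabla_Z X)$, which upon rearrangement reduces to verifying $\nabla_W[Z,X] - [Z,\nabla_W X] = \nabla_{[Z,W]}X + (\text{terms that cancel})$. The key structural input is that $Z$ is an infinitesimal isometry, so $[Z,\cdot]$ preserves $\Sec(H)$ and, crucially, by Proposition \ref{LieR} the operator $\mathcal{L}_Z$ commutes with $\nabla$ in the sense $\mathcal{L}_Z(\nabla_W X) = \nabla_{[Z,W]}X + \nabla_W(\mathcal{L}_Z X)$; this is precisely the statement that $\nabla$ is flow-invariant, encoded infinitesimally. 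I expect the main obstacle to be handling this commutation relation carefully: writing it as $[Z,\nabla_W X] = \nabla_{[Z,W]}X + \nabla_W[Z,X]$ and confirming it follows from $\mathcal{L}_Z(\nabla R) = 0$-type invariance, or alternatively deriving it directly from $\varphi^t_*(\nabla_X Y) = \nabla_{\varphi^t_* X}\varphi^t_* Y$ (the relation used in the proof of Proposition \ref{LieR}) by differentiating at $t=0$. Once this commutation identity is in hand, substituting it into the leftover terms makes the bracket contributions collapse exactly onto $\nabla_{[Z,W]}X$, and comparison with (\ref{Rform}) yields $(\nabla_W A_Z)X = R(Z,W)X$.

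Finally I would treat $V = \xi$. By Corollary \ref{cor3}, $\nabla_\xi A_Z = 0$, and by Proposition \ref{prop2}, $R(Z,\xi) = -R(\xi,Z) = 0$ (using skew-symmetry in the first two slots, or directly $R(\xi,\cdot)=0$ after noting $R(Z,\xi)X = -R(\xi,Z)X$), so both sides vanish and the identity holds trivially in the Reeb direction. Combining the horizontal case and the Reeb case via $C^\infty$-linearity and the decomposition of an arbitrary $V$ into its horizontal part $PV$ and its $\xi$-component completes the proof. The cleanest writeup will likely foreground the flow-invariance of $\nabla$ as the single nontrivial ingredient and relegate the rest to bookkeeping with (\ref{Rform}).
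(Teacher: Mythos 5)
Your proof is correct, but it takes a genuinely different route from the paper's. The paper follows Kostant's symmetrization argument: it first shows by direct computation (using (\ref{Rform}), Lemma \ref{lem1} and the Jacobi identity) that $(\nabla_YA_Z)X - R(Z,Y)X$ is \emph{symmetric} in $X,Y\in\Sec(H)$, then forms the trilinear map $\Psi(Y,X,W) = g\big((\nabla_YA_Z)X - R(Z,Y)X,W\big)$, observes it is skew-symmetric in $X,W$ by Propositions \ref{skewR} and \ref{prop4}, and kills it with the S3 lemma; the case $V=\xi$ is handled exactly as you do, via Corollary \ref{cor3} and Proposition \ref{prop2}. You instead take as your key ingredient the infinitesimal flow-invariance of the connection, $[Z,\nabla_WX] = \nabla_{[Z,W]}X + \nabla_W[Z,X]$, after which the identity falls out in two lines from (\ref{Rform}); indeed, with that commutation relation in hand, $(\nabla_WA_Z)X = \nabla_W[Z,X] - \nabla_W\nabla_ZX - [Z,\nabla_WX] + \nabla_Z\nabla_WX = \nabla_Z\nabla_WX - \nabla_W\nabla_ZX - \nabla_{[Z,W]}X = R(Z,W)X$. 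This is shorter and more conceptual, but it shifts the entire burden onto the invariance relation $\varphi^t_*(\nabla_XY) = \nabla_{\varphi^t_*X}\varphi^t_*Y$, which the paper asserts (in the proof of Proposition \ref{LieR}) only with the remark that it ``can be obtained by direct calculations''; the paper's tensorial argument avoids differentiating a flow altogether and stays within the algebraic facts it has already proved. One small inaccuracy in your write-up: Proposition \ref{LieR} itself states $\mathcal{L}_Z(\nabla^iR)=0$ and $\mathcal{L}_Z(\nabla^id\alpha)=0$, not $\mathcal{L}_Z\nabla = 0$, so your commutation identity does not follow from that proposition as stated --- your alternative derivation (differentiating the flow-invariance relation at $t=0$) is the correct justification, and note that this relation is in fact essentially equivalent, given (\ref{Rform}), to the proposition being proved, so no circularity arises only because the flow-invariance is established independently (e.g.\ by the uniqueness of the metric torsion-free connection under pullback by isometries).
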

\begin{proof}
 There are two cases. At first we treat the easier one, i.e., $V = \xi$. By Corollary \ref{cor3} we know that $\nabla_{\xi}A_Z = 0$, while $R(Z,\xi) = 0$ by Proposition \ref{prop2}. 

Now suppose that $V\in\Sec(H)$. First, similarly as in \cite{Kostant}, we will show that the expression $\nabla_Y(A_Z)X - R(Z,Y)X$ is symmetric with respect to $X$ and $Y$, for $X,Y\in\Sec(H)$. To this end let us fix $X,Y\in\Sec(H)$. Then
\begin{align*}
 (\nabla_YA_Z)X = \nabla_Y(A_ZX) - A_Z(\nabla_YX) = \nabla_Y[Z,X] - \nabla_Y\nabla_ZX - A_Z(\nabla_YX)
\end{align*}
and similarly
\begin{align*}
 (\nabla_XA_Z)Y =  \nabla_X[Z,Y] - \nabla_X\nabla_ZY - A_Z(\nabla_XY)\text{.}
\end{align*}
Subtracting the second equation from the first we have
\begin{gather*}
 (\nabla_YA_Z)X - (\nabla_XA_Z)Y = \smallskip\\
 A_Z(P[X,Y]) + \nabla_Y[Z,X] - \nabla_X[Z,Y] + \nabla_X\nabla_ZY - \nabla_Y\nabla_ZX\text{.}
\end{gather*}
Using (\ref{Rform}) we are led to
\begin{gather*}
 (\nabla_YA_Z)X - (\nabla_XA_Z)Y = \smallskip\\
 A_Z(P[X,Y]) + P[Y,[Z,X]] + P[[Z,Y],X] + R(X,Z)Y + R(Z,Y)X + \nabla_ZP[X,Y] = \smallskip\\
 [Z,P[X,Y]] + P[Y,[Z,X]] + P[[Z,Y],X] + R(X,Z)Y + R(Z,Y)X\text{.}
\end{gather*}
Using Lemma \ref{lem1} and the Jacobi identity, the first three terms in the last equation sum up to zero, therefore we obtain
\begin{align*}
 (\nabla_YA_Z)X - (\nabla_XA_Z)Y = R(X,Z)Y + R(Z,Y)X
\end{align*}
which finally gives
\begin{align*}
 (\nabla_YA_Z)X - R(Z,Y)X =  (\nabla_XA_Z)Y - R(Z,X)Y\text{.}
\end{align*}
Now let us consider the following $3$-linear form
\begin{align}
 \Psi(Y,X,W) = g\big((\nabla_YA_Z)X - R(Z,Y)X, W\big)\text{.}
\end{align}
As we already know $\Psi$ is symmetric with respect to $X$ and $Y$. Moreover
\begin{gather*}
 \Psi(Y,X,W) + \Psi(Y,W,X) = \\
 g((\nabla_YA_Z)X,W) - g(R(Z,Y)X, W) + g((\nabla_YA_Z)W,X) - g(R(Z,Y)W, X)
\end{gather*}
where the latter expression equals zero because $R(Z,Y)$ and $\nabla_Y(A_Z)$ are skew-symmetric by Propositions \ref{skewR}, \ref{prop4}. Therefore $\Psi$ is skew-symmetric with respect to $X$ and $W$. By the so-called \textit{S3 lemma} $\Psi$ vanishes identically which ends the proof.
\end{proof}

Let $Z$ be an infinitesimal isometry. In the sequel we will make use of the following decomposition 
\begin{align}\label{DecompNot}
 Z = PZ + f_Z\xi\text{.}
\end{align}
where $P$ is the projection (\ref{project}) and $f_Z$ is an appropriate smooth function. Note that if $Z_1,Z_2$ are infinitesimal isometries such that their horizontal parts are equal, i.e., $PZ_1 = PZ_2$, and moreover $Z_1(q_0) = Z_2(q_0)$ at a point $q_0\in M$, then $Z_1 = Z_2$. Indeed, we have $Z_i = Y + f_{Z_i}\xi$, $i=1,2$, $Y\in\Sec(H)$. Now $Z_1 - Z_1 = (f_{Z_2} - f_{Z_1})\xi$ is an isometry which implies (cf. Proposition \ref{PropIsom}) that $f_{Z_2} = f_{Z_1} + c$ for a constant $c$. However $f_{Z_1}(q_0) = f_{Z_2}(q_0)$, so $c = 0$. Let also note that if $Z = PZ + f_Z\xi$ is an infinitesimal isometry then
\begin{align}\label{r2}
 [\xi,PZ] = 0,\;\; \xi(f_Z) = 0\text{.}
\end{align}
Actually, $0 = [Z,\xi] = [PZ,\xi] -\xi(f_Z)\xi$, where the first summand, when evaluated at a point, is in $H$ while the other is in $\Sp\{\xi\}$. 

\begin{lemma}\label{lem2}
 Let $Z$ be an infinitesimal isometry. Then for every $V\in\Sec(TM)$ 
 \begin{align*}
 \nabla_VPZ = -A_ZV\text{.} 
 \end{align*}
\end{lemma}
Note that although $V$ does not have to be horizontal, the above formula still makes sense. It follows from (\ref{r2}) as well as from the remark at the beginning of subsection \ref{subSecAZ} that both sides are zero on $\Sp\{\xi\}$.
\begin{proof}[Proof of Lemma \ref{lem2}.]
 Indeed, for $V\in\Sec(H)$ the formula follows from the sequence of equalities
 \begin{gather*}
 \nabla_VPZ = \nabla_{PZ}V + P([V,PZ]) = \nabla_{PZ}V + P([V,Z - f_Z\xi]) =  \\ 
 \nabla_{Z - f_Z\xi}V + [V,Z] - f_Z[V,\xi] = \nabla_ZV - f_Z\nabla_{\xi}V - [V,Y] + f_Z[\xi,V] = -A_ZV \text{.}
\end{gather*}
\end{proof}

Recall that $\alpha$ stands for the normalized contact form defined by the sub-Riemannian structure that we consider. From the very definition of the Reeb vector field we have $d\alpha(V,W) = d\alpha(PV,W)$ for all $V,W\in\Sec(TM)$. 
As a result of the above considerations we are ready to prove a proposition describing an important for the future use observation.
\begin{proposition}\label{propEqs}
Let $\gamma\colon [a,b]\longrightarrow M$ be a smooth curve and let $Z$ be an infinitesimal isometry. Then the following system of ordinary differential equations is satisfied
 \begin{align}\label{r1}
 \begin{array}{l}
  \nabla_{\dot{\gamma}(t)}PZ = -A_Z(\gamma(t))\dot{\gamma}(t) \smallskip\\
  \nabla_{\dot{\gamma}(t)}A_Z = R(PZ(\gamma(t)), \dot{\gamma}(t)) \smallskip\\
  \nabla_{\dot{\gamma}(t)}f_Z = -d\alpha(PZ(\gamma(t)), \dot{\gamma}(t))
 \end{array}
\end{align}
along $\gamma$.
\end{proposition}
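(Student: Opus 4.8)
The plan is to recognize that all three equations are pointwise tensorial identities evaluated along the curve $\gamma$, so I would set $V = \dot{\gamma}(t)$ once and verify each line separately. The key observation is that the first two lines are essentially restatements of results already established in this section, while only the third requires genuinely new work. First I would invoke Lemma \ref{lem2}, which gives $\nabla_V PZ = -A_Z V$ for every $V\in\Sec(TM)$; specializing $V = \dot{\gamma}(t)$ yields the first equation verbatim, and I would emphasize (as the note after Lemma \ref{lem2} does) that this makes sense even when $\dot{\gamma}(t)$ is not horizontal.

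For the second equation I would apply Proposition \ref{prop3}, which states $\nabla_V A_Z = R(Z,V)$. To reconcile this with the stated right-hand side $R(PZ,\dot{\gamma})$, I would use the decomposition $Z = PZ + f_Z\xi$ from (\ref{DecompNot}) together with Proposition \ref{prop2}, which asserts $R(\xi,W)=0$ for all $W$. Thus $R(Z,V) = R(PZ,V) + f_Z\,R(\xi,V) = R(PZ,V)$, and the second line follows immediately upon setting $V = \dot{\gamma}(t)$.

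The third equation is where the actual computation lies. I would first identify $f_Z = \alpha(Z)$, which follows from $Z = PZ + f_Z\xi$ combined with $\alpha(PZ)=0$ and $\alpha(\xi)=1$. Since the connection reduces to the ordinary directional derivative on scalar functions, the task is to show $\dot{\gamma}(f_Z) = -d\alpha(PZ,\dot{\gamma})$. I would split $\dot{\gamma} = P\dot{\gamma} + \alpha(\dot{\gamma})\xi$: the vertical part contributes nothing because $\xi(f_Z)=0$ by (\ref{r2}). For the horizontal part I would apply the contact condition of Proposition \ref{ContCond} (infinitesimal isometries being contact), giving $P\dot{\gamma}(\alpha(Z)) = -d\alpha(Z,P\dot{\gamma})$, and then use the remark preceding the statement that $d\alpha$ depends only on the horizontal components of its arguments to rewrite $d\alpha(Z,P\dot{\gamma}) = d\alpha(PZ,\dot{\gamma})$.

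The main obstacle, and the only point requiring care, is handling a general (not necessarily horizontal) curve $\gamma$ in the third equation: the projection $P$ must be tracked on both arguments of $d\alpha$, and the argument hinges on combining $\xi(f_Z)=0$ with the fact that $d\alpha$ annihilates the Reeb direction. Once these reductions are in place, the entire proposition collapses onto the already-proved identities of Lemmas and Propositions above, so I anticipate no further difficulty beyond this bookkeeping.
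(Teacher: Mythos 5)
Your proposal is correct. For the first two equations you follow exactly the paper's route: the first is Lemma~\ref{lem2} specialized to $V=\dot{\gamma}(t)$ (the paper leaves this step implicit, since the lemma immediately precedes the proposition), and the second is Proposition~\ref{prop3} combined with Proposition~\ref{prop2} to replace $R(Z,\dot{\gamma})$ by $R(PZ,\dot{\gamma})$, exactly as in the paper. The only divergence is in the third equation. The paper invokes $\mathcal{L}_Z\alpha=0$ (Proposition~\ref{PropIsom}) and Cartan's formula to get the single $1$-form identity $df_Z + i_Zd\alpha = 0$, which it then evaluates on $\dot{\gamma}(t)$ and combines with the remark $d\alpha(V,W)=d\alpha(PV,W)$; this handles the horizontal and vertical directions in one stroke. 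You instead decompose $\dot{\gamma} = P\dot{\gamma} + \alpha(\dot{\gamma})\xi$, kill the vertical contribution with $\xi(f_Z)=0$ from (\ref{r2}), and treat the horizontal part with the contact criterion of Proposition~\ref{ContCond}. The two arguments are close cousins --- Proposition~\ref{ContCond} is precisely the horizontal component of Cartan's formula for $\mathcal{L}_Z\alpha$, while (\ref{r2}) supplies the vertical component --- but yours uses nominally weaker inputs (only the contact property of $Z$ together with $[Z,\xi]=0$, rather than the full strength of $\mathcal{L}_Z\alpha=0$), at the cost of the explicit horizontal/vertical bookkeeping that the paper's one-line evaluation avoids. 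Both are complete and valid proofs.
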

\begin{proof}
In order to obtain the second equation in (\ref{r1}) we use Proposition \ref{prop3} and Proposition \ref{prop2}:
\begin{align*}
 R(Z(\gamma(t)), \dot{\gamma}(t)) = R(PZ(\gamma(t)), \dot{\gamma}(t))\text{.}
\end{align*}
On the other hand, the last equation readily follow from $\mathcal{L}_Z\alpha = 0$. Indeed, by Cartan's formula
 \begin{gather*}
  df_Z + i_Zd\alpha = 0
 \end{gather*}
and it is enough to evaluate the above equation for $\dot{\gamma}(t)$ and use the remark before the statement of Proposition \ref{propEqs}.
\end{proof}
From the third equation in (\ref{r1}) we see that, in accordance with what was said above, $f_Z$ is uniquely determined by $PZ$ and by the value $Z(q_0)$ for an arbitrarily chosen point $q_0$.
\begin{corollary} 
 Suppose that $Z\in \iso(U)$, where $U$ is a connected open set. Then $Z$ is uniquely determined by the value of $Z$ and $A_Z$ at a single point of $U$.
\end{corollary}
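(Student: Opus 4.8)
The plan is to read this corollary as a uniqueness statement and to reduce it to the uniqueness theorem for linear ordinary differential equations by means of Proposition~\ref{propEqs}. Concretely, suppose $Z_1,Z_2\in\iso(U)$ satisfy $Z_1(q_0)=Z_2(q_0)$ and $A_{Z_1}(q_0)=A_{Z_2}(q_0)$ at some point $q_0\in U$; the claim is that $Z_1=Z_2$. First I would record that the datum $Z_i(q_0)$, being a vector in $T_{q_0}M=H_{q_0}\oplus\Sp\{\xi(q_0)\}$, determines through the decomposition (\ref{DecompNot}) both the horizontal part $PZ_i(q_0)$ and the scalar $f_{Z_i}(q_0)$. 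Hence the full triple $(PZ_i(q_0),A_{Z_i}(q_0),f_{Z_i}(q_0))$ coincides for $i=1,2$, so that the two isometries carry identical initial data at $q_0$.

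Next I would invoke Proposition~\ref{propEqs}. Fix an arbitrary point $q\in U$ and, using that a connected open subset of a manifold is smoothly path-connected, choose a smooth curve $\gamma\colon[a,b]\longrightarrow U$ with $\gamma(a)=q_0$ and $\gamma(b)=q$. Along $\gamma$ both maps $t\longmapsto(PZ_i(\gamma(t)),A_{Z_i}(\gamma(t)),f_{Z_i}(\gamma(t)))$ solve the system (\ref{r1}). The crucial observation is that (\ref{r1}) is a \emph{linear} first-order system whose coefficients---built from $\dot\gamma(t)$, the curvature $R$, and $d\alpha$ evaluated along $\gamma$---are independent of the isometry: the right-hand sides $-A_Z\dot\gamma$, $R(PZ,\dot\gamma)$ and $-d\alpha(PZ,\dot\gamma)$ are each linear in the unknowns $(PZ,A_Z,f_Z)$.

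To turn the covariant equations into an honest ODE I would trivialize the relevant bundle along $\gamma$ by a $\nabla$-parallel frame (the connection being metric, such a frame exists and carries skew-symmetric endomorphisms of $H$ to skew-symmetric endomorphisms, so the subbundle in which $A_Z$ lives is preserved). In such a frame the operators $\nabla_{\dot\gamma}$ become ordinary $t$-derivatives and (\ref{r1}) takes the form $\dot u=B(t)u$ for the component vector $u(t)$, with $B(t)$ continuous and the same for $i=1,2$. Since the two solutions share the initial value $u(a)$ by the first paragraph, the classical uniqueness theorem for linear systems forces them to agree on all of $[a,b]$; in particular $PZ_1(q)=PZ_2(q)$ and $f_{Z_1}(q)=f_{Z_2}(q)$, whence $Z_1(q)=Z_2(q)$. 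As $q\in U$ was arbitrary, $Z_1=Z_2$.

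The argument is essentially routine once Proposition~\ref{propEqs} is at hand, so there is no serious obstacle; the only points requiring care are the bookkeeping that $Z(q_0)$ together with $A_Z(q_0)$ supplies the complete initial data $(PZ(q_0),A_Z(q_0),f_Z(q_0))$, and the reduction of the covariant system (\ref{r1}) to a standard linear ODE via a parallel frame, so that the uniqueness theorem may legitimately be applied. Connectedness of $U$ is precisely what upgrades agreement along individual curves to global equality on $U$.
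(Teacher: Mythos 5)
Your proof is correct and follows essentially the same route as the paper: the paper's own (much terser) argument likewise takes an arbitrary smooth curve in the connected set $U$ starting at $q_0$ and observes that the linear system (\ref{r1}) from Proposition~\ref{propEqs}, with initial data $(PZ(q_0),A_Z(q_0),f_Z(q_0))$, uniquely determines $Z$ along the curve. The extra details you supply --- recovering $PZ(q_0)$ and $f_Z(q_0)$ from $Z(q_0)$ via (\ref{DecompNot}), and reducing the covariant system to a standard linear ODE by a parallel frame --- are exactly the steps the paper leaves implicit.
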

\begin{proof}
 Take $q_0\in U$ and an arbitrary smooth curve $\gamma:[a,b]\longrightarrow U$, $\gamma(a) = q_0$. Given $PZ(q_0)$, $f_Z(q_0)$, and $A_Z(q_0)$, the system (\ref{r1}) uniquely determines the values of $Z$ along $\gamma$.
\end{proof}

\subsection{Prolongation}\label{subscProl}
For a point $q\in M$ denote by $\mathfrak{a}(q)$ the following vector space:
\begin{align*}
 \mathfrak{a}(q) = H_q \oplus E_q \oplus \R \text{,}
\end{align*}
where $E_q$ stands for the set of all linear operators $H_q\longrightarrow H_q$ which are skew-symmetric with respect to $g$. If $U$ is a connected open set and $q\in U$ then the mapping
\begin{align}\label{mapp}
 \iso(U) \ni Z \longrightarrow (PZ(q), A_Z(q), f_Z(q)) \in \mathfrak{a}(q)
\end{align}
is injective for every $p\in U$ by the above considerations.

Now we introduce the notion of prolongation of infinitesimal isometries.

\begin{definition}
 Let $\gamma:[0,1]\longrightarrow M$ be a smooth curve. Take $Z^*\in\is^*(\gamma(0))$. By a prolongation of $Z^*$ along $\gamma$ we mean a family of germs $Z^*(t)\in\is^*(\gamma(t))$ such that $Z^*(0)=Z^*$ and moreover the following condition is satisfied. For every $t$, if $Z_t$ is a representative of $Z^*(t)$ defined on a neighborhood of $\gamma(t)$, it determines the element $(X(t),A(t),c(t))\in\mathfrak{a}(\gamma(t))$$:$ $X(t)=PZ_t(\gamma(t))$, $A(t) = A_{Z_t}(\gamma(t))$, $c(t) = f_{Z_t}(\gamma(t))$. We then require that the system of ordinary differential equations
 \begin{align}\label{EqDefProl}
 \begin{array}{l}
  \nabla_{\dot{\gamma}(t)}X(t) = -A(t)\dot{\gamma}(t) \smallskip\\
  \nabla_{\dot{\gamma}(t)}A(t) = R(X(t)), \dot{\gamma}(t)) \smallskip\\
  \nabla_{\dot{\gamma}(t)}c(t) = -d\alpha(X(t),\dot{\gamma}(t))
 \end{array}
\end{align}
is satisfied along $\gamma$.
\end{definition}

\begin{proposition}
Let $\gamma:[0,1]\longrightarrow M$ be a smooth curve and take $Z^*\in\is^*(\gamma(0))$. If a prolongation of $Z^*$ along $\gamma$ exists, then it is unique.
\end{proposition}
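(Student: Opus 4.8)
The plan is to prove uniqueness of the prolongation by reducing it to the uniqueness of solutions to the system of ordinary differential equations \eqref{EqDefProl} along $\gamma$. Suppose $Z^*(t)$ and $\widetilde{Z}^*(t)$ are two prolongations of the same germ $Z^*\in\is^*(\gamma(0))$ along $\gamma$. For each $t$ they determine triples $(X(t),A(t),c(t))$ and $(\widetilde{X}(t),\widetilde{A}(t),\widetilde{c}(t))$ in $\mathfrak{a}(\gamma(t))$, both of which satisfy the system \eqref{EqDefProl}. At $t=0$ the two germs coincide, so in particular $(X(0),A(0),c(0)) = (\widetilde{X}(0),\widetilde{A}(0),\widetilde{c}(0))$, since these data are read off from any representative of the common germ $Z^*$.

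First I would observe that \eqref{EqDefProl} is, along the fixed curve $\gamma$, a linear system of first-order ordinary differential equations for the unknown $(X(t),A(t),c(t))$ taking values in the pulled-back bundle $\gamma^*\mathfrak{a}$. To see this explicitly one trivializes $H$ along $\gamma$ by a $\nabla$-parallel orthonormal frame $X_1(t),\dots,X_{2n}(t)$; relative to this frame the covariant derivatives $\nabla_{\dot\gamma}X$ and $\nabla_{\dot\gamma}A$ become ordinary $t$-derivatives of the component functions, while the right-hand sides $-A(t)\dot\gamma(t)$, $R(X(t),\dot\gamma(t))$ and $-d\alpha(X(t),\dot\gamma(t))$ are linear in the unknowns with coefficients that depend smoothly (hence continuously) on $t$ through the curvature tensor, through $d\alpha$, and through $\dot\gamma(t)$. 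Thus \eqref{EqDefProl} has the form $u'(t) = \mathcal{A}(t)u(t)$ for a continuous matrix-valued function $\mathcal{A}$, where $u$ collects the components of $X$, $A$, and $c$.

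Next I would invoke the standard uniqueness theorem for linear ordinary differential equations: given the initial value $u(0)$, the solution $u(t)$ on $[0,1]$ is unique. Since both prolongations satisfy the same system with the same initial condition at $t=0$, we conclude $(X(t),A(t),c(t)) = (\widetilde{X}(t),\widetilde{A}(t),\widetilde{c}(t))$ for all $t\in[0,1]$.

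It remains to upgrade the equality of the associated triples to equality of the germs $Z^*(t) = \widetilde{Z}^*(t)$. This is where I expect the only genuine subtlety to lie. The point is that the map \eqref{mapp} sending an infinitesimal isometry to its triple $(PZ(q),A_Z(q),f_Z(q))$ is \emph{injective}, as recorded just after that display; consequently a germ of an infinitesimal isometry at $\gamma(t)$ is determined by its triple at $\gamma(t)$. Indeed, if $Z_t$ and $\widetilde{Z}_t$ are representatives of $Z^*(t)$ and $\widetilde{Z}^*(t)$ on a common connected neighborhood, then having equal triples forces $PZ_t = P\widetilde{Z}_t$ and equal values at $\gamma(t)$ via Proposition~\ref{propEqs} (which shows $PZ$, $A_Z$, and the value of $Z$ propagate along curves from the triple), and then the remark following \eqref{DecompNot} gives $Z_t = \widetilde{Z}_t$ near $\gamma(t)$. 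Hence the germs agree, and the two prolongations coincide. The main obstacle, then, is not the ODE uniqueness itself but ensuring that the correspondence between germs and triples is faithful; once the injectivity of \eqref{mapp} is in hand this is immediate.
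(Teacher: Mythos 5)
Your proof is correct and follows essentially the same route as the paper, whose entire argument is that two prolongations satisfy the same system of ordinary differential equations \eqref{EqDefProl} with the same initial conditions. The extra details you supply --- linearity of the system in a parallel frame, and the faithfulness of the correspondence between germs and triples via the injectivity of \eqref{mapp} --- are exactly what the paper leaves implicit, so there is no divergence in method.
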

\begin{proof}
 Two prolongations satisfy the same system of ordinary differential equations with the same initial conditions.
\end{proof}
\begin{proposition}
 Suppose that $\gamma:[0,1]\longrightarrow M$ is a smooth curve which contains solely $\is^*$-regular points. Then for every $Z^*\in\is^*(\gamma(0))$ the prolongation along $\gamma$ exists.
\end{proposition}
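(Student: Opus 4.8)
The plan is to exploit that the system (\ref{EqDefProl}) is a \emph{linear} first-order system of ordinary differential equations for the triple $(X(t),A(t),c(t))$ along $\gamma$: the first equation is linear in $A$, the second and third are linear in $X$, and each left-hand side is the covariant derivative $\nabla_{\dot{\gamma}}$ along $\gamma$. Hence, for any prescribed initial value at $t=0$, there is a unique solution defined on all of $[0,1]$, and its fundamental solution yields linear isomorphisms $\Phi_{s,t}\colon\mathfrak{a}(\gamma(s))\longrightarrow\mathfrak{a}(\gamma(t))$ carrying the value at $s$ of a solution to its value at $t$. First I would solve (\ref{EqDefProl}) globally on $[0,1]$ with initial triple equal to that of $Z^*$. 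The whole difficulty is then to show that, for each $t$, the resulting triple $(X(t),A(t),c(t))$ is actually the triple of a \emph{germ} $Z^*(t)\in\is^*(\gamma(t))$; granting this, the family $\{Z^*(t)\}$ is by construction the desired prolongation (its uniqueness being the content of the preceding proposition).

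For each $p\in M$ let $\mathcal{W}_p\subset\mathfrak{a}(p)$ denote the image of the injective triple map $\is^*(p)\ni Z^*\mapsto(PZ(p),A_Z(p),f_Z(p))$, $Z$ a representative of $Z^*$; this subspace is intrinsic to $p$. The key observation is an invariance lemma: if $U$ is an $\iso$-special neighborhood of a regular point on which $p\mapsto\dim\is^*(p)$ is constant, and $\gamma$ runs inside $U$, then $\Phi_{s,t}(\mathcal{W}_{\gamma(s)})=\mathcal{W}_{\gamma(t)}$. Indeed, by Proposition \ref{propEqs} every $Z\in\iso(U)$ produces, via $t\mapsto(PZ(\gamma(t)),A_Z(\gamma(t)),f_Z(\gamma(t)))$, a solution of (\ref{EqDefProl}), so $\Phi_{s,t}$ sends its triple at $\gamma(s)$ to its triple at $\gamma(t)$. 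By Proposition \ref{prop01} the constancy of the dimension forces every germ in $\is^*(\gamma(s))$ (resp. $\is^*(\gamma(t))$) to admit a representative in $\iso(U)$; hence, as $Z$ ranges over $\iso(U)$, its triples at $\gamma(s)$ fill out exactly $\mathcal{W}_{\gamma(s)}$ and those at $\gamma(t)$ exactly $\mathcal{W}_{\gamma(t)}$, and the displayed equality follows at once.

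To globalize, I would cover $[0,1]$ by the open sets $\gamma^{-1}(U^{(t)})$, where $U^{(t)}$ is an $\iso$-special neighborhood of $\gamma(t)$ on which $\dim\is^*$ is constant (such a $U^{(t)}$ exists precisely because $\gamma(t)$ is $\is^*$-regular), and choose by the Lebesgue number lemma a partition $0=t_0<t_1<\dots<t_k=1$ with each $\gamma([t_i,t_{i+1}])$ contained in one such neighborhood $U_i$. The initial triple lies in $\mathcal{W}_{\gamma(0)}$ by definition; applying the invariance lemma successively on $[t_0,t_1],[t_1,t_2],\dots$ (the subspaces $\mathcal{W}_{\gamma(t_i)}$ match at the common endpoints, being intrinsic) shows by induction that $(X(t),A(t),c(t))\in\mathcal{W}_{\gamma(t)}$ for every $t\in[0,1]$. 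Thus each triple is realized by a unique germ $Z^*(t)\in\is^*(\gamma(t))$, and these germs constitute the prolongation. The step I expect to be the main obstacle is the invariance lemma, and within it the use of regularity: it is exactly the constancy of $\dim\is^*$ along $\gamma$ (through Proposition \ref{prop01}) that upgrades the obvious inclusion $\Phi_{s,t}(\mathcal{W}_{\gamma(s)})\subset\mathcal{W}_{\gamma(t)}$ to an equality, and so prevents the solution of (\ref{EqDefProl}) from escaping the locus of triples coming from genuine germs.
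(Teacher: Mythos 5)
Your argument is correct, and its local engine is the same as the paper's: on an $\iso$-special neighborhood $U$ with $\dim\is^*$ constant, Proposition \ref{prop01} supplies a genuine $Z\in\iso(U)$ representing any prescribed germ, Proposition \ref{propEqs} says its triples solve (\ref{EqDefProl}), and uniqueness for this linear system identifies the abstract solution with those triples. Where you genuinely differ is in the globalization. The paper never solves (\ref{EqDefProl}) in the abstract: it builds the prolongation incrementally, sets $t_0=\sup\{t:\ \text{the prolongation exists on }[0,t)\}$, matches an isometry $Z_1\in\iso(U)$ to the prolongation at a time $t_1<t_0$ inside a special neighborhood of $\gamma(t_0)$, and pushes past $t_0$ to reach a contradiction. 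You instead obtain the candidate solution on all of $[0,1]$ for free from linearity (the flow $\Phi_{s,t}$) and then certify a posteriori, via your invariance lemma and a Lebesgue-number induction, that it never leaves the subspaces $\mathcal{W}_{\gamma(t)}$. This buys a clean separation of concerns (existence of the candidate is trivial; regularity enters in exactly one place) and avoids the supremum bookkeeping. Two minor points, neither affecting correctness. First, your claim that $\Phi_{s,t}$ maps $\mathfrak{a}(\gamma(s))$ into $\mathfrak{a}(\gamma(t))$ (preservation of skew-symmetry) is not justified in Section \ref{SecProl}; it is also not needed, since your induction keeps the relevant solution in $\mathcal{W}_{\gamma(t)}\subset\mathfrak{a}(\gamma(t))$, and if desired it follows from Proposition \ref{skewR} exactly as in the corresponding lemma of Section \ref{SecAnalytic}. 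Second, your closing remark slightly misplaces the role of regularity: the inclusion $\Phi_{s,t}(\mathcal{W}_{\gamma(s)})\subset\mathcal{W}_{\gamma(t)}$ over the whole of $U$ is itself not obvious, because a germ at $\gamma(s)$ a priori has a representative only on a small neighborhood, and it is precisely Proposition \ref{prop01} (hence constancy of $\dim\is^*$) that extends it to all of $U$; moreover, only this inclusion, not the equality, is what your forward induction uses.
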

\begin{proof}
 The result is crucial for the paper, so we present its proof, although the reasoning is quite standard and can be found, with obvious modifications, in \cite{Nom}.
 
 Let $\gamma:[0,1]\longrightarrow M$ be a curve as in the hypothesis of the proposition and take $Z^*\in\is^*(\gamma(0))$. Let $Z$ be a representative of $Z^*$ defined on a neighborhood of $\gamma(0)$. Clearly $Z$ defines the elements 
\begin{align}\label{fam1}
 (X(t),A(t),c(t)) = (PZ({\gamma(t)}), A_Z({\gamma(t)}), f_Z(\gamma(t))) \in \mathfrak{a}(\gamma(t)) 
\end{align}
on some interval $[0,t_1)$ and the equations (\ref{EqDefProl}) are satisfied. It means that the prolongation $Z^*(t)$ of $Z^*$ exits on a certain interval $[0,t_1)$, $t_1>0$. Let
 \begin{align*}
  t_0 = \sup\{t_1>0:\; \textit{the prolongation exists on $[0,t_1)$}\}\text{.}
 \end{align*}
Obviously, we can assume that (\ref{fam1}) is defined on $[0,t_0)$. Denote by $U$ an $\iso$-special neighborhood of $\gamma(t_0)$ such that $U\ni p\longrightarrow\dim\is^*(p)$ is constant. We find an $\varepsilon>0$ such that $\gamma([t_0-\varepsilon, t_0+\varepsilon])\subset U$. Pick a number $t_1$, $t_0-\varepsilon<t_1<t_0$, and denote by $Z_1$ an element of $\iso(U)$ such that $(Z_1)_{\gamma(t_1)} = Z^*(t_1)$ (cf. Proposition \ref{prop01}). The family 
\begin{align}\label{fam2}
 (X_1(t), A_1(t), c_1(t)) \in \mathfrak{a}(\gamma(t))\text{,}
\end{align}
$t_0 - \varepsilon < t < t_0 + \varepsilon$, induced by $Z_1$ satisfies the equations (\ref{EqDefProl}). Now, in order to construct the prolongation of $Z^*$ beyond $t_0$, it is enough to show that the two families (\ref{fam1}) and (\ref{fam2}) coincide on $[t_0 - \varepsilon,t_0)$. This is, however, clear because they both satisfy the same system of differential equations (\ref{EqDefProl}) and, moreover, $(X_1(t_1), A_1(t_1), c_1(t_1)) = (X(t_1),A(t_1),c(t_1))$ by the choice of the field $Z_1$. 
\end{proof}

\subsection{Proof of Theorem \ref{th1}}
 Everything will become clear if we show that the prolongation procedure along a curve does not depend on a curve itself but only on its endpoints. Indeed, it is true in simply connected $\iso$-special neighborhoods, since then, by Proposition \ref{prop01}, all germs that arise during the prolongation are germs determined by a single vector field. Take two points $q_0,q_1\in M$, and let $\gamma_0, \gamma_1\colon [0,1]\longrightarrow M$ be such smooth curves that $\gamma_0(0) = \gamma_1(0) = q_0$, $\gamma_0(1) = \gamma_1(1) = q_1$. By assumption, $\gamma_0$ and $\gamma_1$ can be embedded into a family of curves $\gamma_s\colon[0,1]\longrightarrow M$, $s\in[0,1]$, such that $\gamma_s(0) = q_0$, $\gamma_s(1)=q_1$ and the mapping $(t,s)\longrightarrow\gamma_s(t)$ is continuous. Suppose that $\{U_i\}_{i=1,\dots,m}$ is a finite cover of $\gamma_0([0,1])$ by simply connected $\iso$-special neighborhoods. Then we find a number $\delta>0$ such that $\gamma_s([0,1])\subset\bigcup_{i=1}^m U_i$ for $s<\delta$. Using appropriate deformations we see that the prolongations from $q_0$ to $q_1$ along $\gamma_0$ and $\gamma_s$ coincide for $s<\delta$. The standard argument shows now that this is true for every $s\in[0,1]$.
 
Now, starting from an arbitrary point $q_0\in M$ and a germ $Z^*(q_0)\in\is^*(q_0)$ we build a field of germs $Z^*(q)\in\is^*(q)$ on $M$, where each $Z^*(q)$ is the prolongation of $Z^*(q_0)$ along a smooth curve going from $q_0$ to $q$. For each $q\in M$, similarly as in (\ref{mapp}), we have an injection $\is^*(q)\ni Z^*(q)\longrightarrow (PZ(q),A(q),c(q))\in\mathfrak{a}(q)$ which permits to construct a globally defined vector field $Z(q) = PZ(q) + c(q)\xi(q)$. We argue that $Z$ is an infinitesimal isometry. Take a point $q\in M$. By Proposition \ref{prop01} we find a neighborhood $U$ of $q$ and $W\in\iso(U)$ such that $Z^*(q) = (W)_q$. Using what we have said above $W$ and $Z$ coincide on $U$ proving that $Z$ is an infinitesimal isometry around $q$. This ends the proof.

\section{$\is$-regular points}\label{SecAnalytic}

In this section we prove Theorems \ref{th2} and \ref{th3}, again following \cite{Nom}. As above our main tool is the sub-Riemannian connection introduced in Section \ref{sRCON}. We keep the same notation, i.e., $(M,H,g)$ is a fixed special sub-Riemannian manifold  which is smooth unless otherwise stated, and $\nabla$, $\xi$, $\alpha$ etc. are as in the previous sections.


\subsection{$\iso$-generators}
Let $\mathfrak{H}$ be the algebra of all horizontal mixed tensor fields on $M$ (cf. the end of Section \ref{sRCON}). Also, for a point $q\in M$, let $\mathfrak{H}_q$ stand for the mixed tensor algebra over $H_q$. Similarly as it is done in \cite{Kostant} we adopt the following definition. Fix a point $q\in M$. By an \textit{$\mathfrak{H}$-derivation at $q$} we mean any $\R$-linear mapping $D\colon\mathfrak{H}\longrightarrow\mathfrak{H}_q$ which satisfies
\begin{enumerate}
 \item \label{P1} $D(T_1\otimes T_2) = T_1(q)\otimes (DT_2) + (DT_1)\otimes T_2(q)$ for every $T_1,T_2\in\mathfrak{H}$;
 \item \label{P2} $D$ preserves the type of tensors;
 \item \label{P3} $D$ commutes with any contraction.
\end{enumerate}
For instance, if $v\in T_qM$ then $\nabla_v$ induces such a derivation. Also $\mathcal{L}_Z$ induces this type of derivation, where $Z$ is an infinitesimal isometry. And one more example. Let $A\colon H_q\longrightarrow H_q$ be an endomorphism. Then $A$ defines an $\mathfrak{H}$-derivation at $q$ by simply setting $Af = 0$ for $f\in C^{\infty}(M)$ and $AX = A(X(q))$ for $X\in\Sec(H)$. Then, for instance, by (\ref{P1}) and (\ref{P3}) we have $A\omega = -A^*(\omega(q))$ for $\omega\in\Sec(H^*)$, where $A^*$ is the transpose of $A$. It is proved that every $\mathfrak{H}$-derivation at $q$ which is zero on functions is generated by an endomorphism $H_q\longrightarrow H_q$.

A triple $(X,A,c) \in\mathfrak{a}(q)$ (see subsection \ref{subscProl}) is called an \textit{$\iso$-generator at $q$}, if the derivation at $q$ induced by $\nabla_{X+c\xi(q)} + A$ satisfies
\begin{gather*}
 (\nabla_{X+c\xi(q)} + A)(\nabla^iR) = 0,\; (\nabla_{X+c\xi(q)} + A)(\nabla^id\alpha) = 0
\end{gather*}
for every $i=0,1\dots$.
(here $\nabla^0$ is just the identity). Recall that in accordance with the agreement that we made at the end of Section \ref{sRCON} we view each $\nabla^iR$ as an element of $\mathfrak{H}$. Exactly the same remark applies to $\nabla^id\alpha$. The set of $\iso$-generators at $q$ will be denoted by $\is(q)$. It is clear from Proposition \ref{LieR} that if $Z$ is an infinitesimal isometry then $(PZ(q),A_Z(q),f_Z(q))\in\is(q)$, that is to say we have a linear injection $\is^*(q)\longrightarrow\is(q)$.

In order to investigate further properties of $\is(q)$ we proceed similarly as in \cite{Nom}. For an integer $m$ we introduce the spaces 
\begin{gather*}
 \is_m(q)=\{(X,A,c)\in\mathfrak{a}(q):\;\\
 (\nabla_{X+c\xi(q)}+A)(\nabla^iR) = 0, (\nabla_{X+c\xi(q)}+A)(\nabla^id\alpha), i=0,1,\dots m\} \text{.}
\end{gather*}
By the very definition $\is_{m+1}(q)\subset\is_m(q)\subset\mathfrak{a}(q)$ for every $m$, and $\is(q) = \bigcap_{m=1}^{\infty}\is_m(q)$. Thus the sequence $\{\dim\is_m(q)\}$ is non-increasing, so it stabilizes from a certain moment, i.e., there exists an integer $m_0$ (depending on $q$) such that $\dim\is(q) = \dim\is_m(q)$ for all $m>m_0$.
\begin{lemma}\label{lem3}
 Take a point $q_0\in M$. There exits an integer $m$ and a neighborhood $U$ of $q_0$ with the property that 
 \[
  \dim\is(q)\leq\dim\is_m(q)\leq\dim\is_m(q_0)=\dim\is(q_0)
 \]
 for every $q\in U$.
\end{lemma}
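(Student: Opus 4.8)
The plan is to separate the claimed chain into three pieces and to recognize that only one of them carries real content. The leftmost inequality $\dim\is(q)\le\dim\is_m(q)$ is immediate for every $q$ and every $m$, since $\is(q)=\bigcap_{k\ge 1}\is_k(q)\subset\is_m(q)$. The rightmost equality $\dim\is_m(q_0)=\dim\is(q_0)$ is secured by the stabilization recorded just before the lemma: there is an index $m_0=m_0(q_0)$ with $\dim\is_k(q_0)=\dim\is(q_0)$ for all $k>m_0$, so it suffices to fix once and for all some $m>m_0$. With this choice of $m$ the whole problem reduces to producing a neighborhood $U$ of $q_0$ on which the middle inequality $\dim\is_m(q)\le\dim\is_m(q_0)$ holds, i.e., to establishing upper semicontinuity of $q\mapsto\dim\is_m(q)$ at $q_0$.

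The key is to exhibit $\is_m(q)$ as the kernel of a linear map whose matrix varies smoothly with $q$. I would work on a small neighborhood of $q_0$ carrying a positively oriented orthonormal frame $X_1,\dots,X_{2n}$ of $H$, which trivializes the bundle $H\oplus E\oplus\R$ with fiber $\mathfrak{a}(q)$; a triple $(X,A,c)\in\mathfrak{a}(q)$ is then encoded by a fixed number of real coordinates independent of $q$. For each $i$ the conditions $(\nabla_{X+c\xi(q)}+A)(\nabla^iR)=0$ and $(\nabla_{X+c\xi(q)}+A)(\nabla^id\alpha)=0$ are linear in $(X,A,c)$: by the derivation rules (\ref{P1})--(\ref{P3}), the term $\nabla_{X+c\xi(q)}(\nabla^iR)$ is the contraction of $\nabla^{i+1}R$ with $X+c\xi(q)$ evaluated at $q$, while $A(\nabla^iR)$ is the pointwise derivation action of the endomorphism $A$ on $\nabla^iR(q)$. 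Each is determined by the values at $q$ of $\nabla^iR$ and $\nabla^{i+1}R$, and similarly for $\nabla^id\alpha$ and $\nabla^{i+1}d\alpha$, whose frame components are smooth functions of $q$. Assembling these equations for $i=0,\dots,m$ and for both $R$ and $d\alpha$ produces a single linear map $L_m(q)\colon\mathfrak{a}(q)\to W$, with $W$ a fixed coordinate space, whose matrix entries are smooth (in fact polynomial in the frame components of the tensors), and $\is_m(q)=\ker L_m(q)$.

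I would then conclude by the standard lower semicontinuity of matrix rank. If $\mathrm{rank}\,L_m(q_0)=r$, some $r\times r$ minor of the matrix of $L_m(q_0)$ is nonzero; by continuity of the entries it stays nonzero on a neighborhood $U$ of $q_0$, so $\mathrm{rank}\,L_m(q)\ge r$ there. Since $\dim\mathfrak{a}(q)$ is constant, this yields $\dim\is_m(q)=\dim\mathfrak{a}(q)-\mathrm{rank}\,L_m(q)\le\dim\mathfrak{a}(q_0)-r=\dim\is_m(q_0)$ for all $q\in U$, and combining with the two trivial pieces gives the full chain on $U$. The only point demanding genuine care is the claim underlying the previous paragraph, namely that the defining equations of $\is_m(q)$ depend continuously on $q$: one must check that writing $(\nabla_{X+c\xi(q)}+A)(\nabla^iR)$ out in the frame yields coefficients assembled algebraically from the smoothly varying components of $\nabla^iR$ and $\nabla^{i+1}R$, with no hidden degeneration near $q_0$. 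Once this is verified the semicontinuity argument is entirely routine.
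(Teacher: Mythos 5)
Your proof is correct and follows essentially the same route as the paper: both realize $\is_m(q)$ as the kernel of a linear map on $\mathfrak{a}(q)$ whose coefficients vary smoothly with $q$ (the paper's $f_q$ with $\ker f_q=\is_m(q)$), then combine the stabilization $\dim\is_m(q_0)=\dim\is(q_0)$ for large $m$ with upper semicontinuity of kernel dimension, which the paper leaves implicit by deferring to Nomizu's standard rank-semicontinuity argument. Your write-up simply makes that deferred step explicit via a frame trivialization and the persistence of a nonzero minor.
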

\begin{proof}
 Take $m$ to be an arbitrary integer. For integers $r,s\geq 0$ denote by $\mathfrak{H}^{r,s}_q$ the set of elements of $\mathfrak{H}_q$ of type $(r,s)$. Let moreover
 \[
  \mathfrak{H}^i_q = \mathfrak{H}^{1,3+i}_q \oplus \mathfrak{H}^{0,2+i}_q
 \]
and define a mapping $f_i\colon\mathfrak{a}(q)\longrightarrow \mathfrak{H}^i_q$ by formula
 \[
 f_i(X,A,c) = (\nabla_{X+c\xi(q)}+A)(\nabla^iR) + (\nabla_{X+c\xi(q)}+A)(\nabla^id\alpha)\text{.} 
 \]
Setting $S(q) = \mathfrak{H}^0_q\oplus \mathfrak{H}^1_q\oplus\dots\oplus \mathfrak{H}^m_q$, consider a linear mapping $f_q\colon\mathfrak{a}(q)\longrightarrow S(q)$ given by $f_q(X,A,c) = f_0(X,A,c)+\dots+f_m(X,A,c)$. We see that $\ker f_q = \is_m(q)$. After these preparatory steps the proof is standard and goes without changes as in \cite{Nom}.
\end{proof}
A point $q\in M$ will be called \textit{$\is$-regular} if the function $p\longrightarrow\dim\is(p)$ is constant in a neighborhood of $q$. The set of $\is$-regular points will be denoted by $M^o$.
\begin{proposition}\label{OpenDens}
 The set $M^o$ is open and dense in $M$.
\end{proposition}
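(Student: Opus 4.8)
The plan is to reinterpret Lemma \ref{lem3} as upper semi-continuity of the integer-valued function $d\colon M\longrightarrow\mathbb{Z}_{\ge 0}$, $d(q)=\dim\is(q)$, and then to run the standard argument for the locally-constant locus of such a function. First I would observe that $d$ is bounded: since $H$ has constant rank $2n$, the dimension $\dim\mathfrak{a}(q)=2n+\dim E_q+1$ is a fixed integer independent of $q$, and $\is(q)\subseteq\mathfrak{a}(q)$, so $d$ takes values in a finite set $\{0,1,\dots,D\}$. Next I would record the semicontinuity consequence of Lemma \ref{lem3}, namely that every point $q_0$ has a neighborhood $U$ on which $d(q)\le d(q_0)$. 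From this I would deduce that for each integer $c$ the sublevel set $\{q\in M:d(q)\le c\}$ is open, because any $q_0$ belonging to it has a neighborhood $U$ with $d(q)\le d(q_0)\le c$ for all $q\in U$.

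Openness of $M^o$ then follows directly from the definition: if $q\in M^o$ there is an open $V\ni q$ on which $d$ is constant, and since $V$ is a neighborhood of each of its points on which $d$ is constant, every $p\in V$ also lies in $M^o$; hence $V\subseteq M^o$, and $M^o$ is open. (This step uses nothing beyond the definition of $\is$-regularity and does not even require the semicontinuity estimate.)

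For density I would fix an arbitrary nonempty open set $W\subseteq M$ and put $c_0=\min_{q\in W}d(q)$, a minimum that is attained because $d$ is integer-valued and bounded below, independently of any compactness of $W$. The set $W_0=\{q\in W:d(q)\le c_0\}$ is open by the sublevel-set observation, is nonempty by the choice of $c_0$, and satisfies $d\equiv c_0$ on $W_0$ since no point of $W$ carries a strictly smaller value. Thus $d$ is constant on the nonempty open set $W_0$, so $W_0\subseteq M^o$ and in particular $W\cap M^o\ne\emptyset$. As $W$ was arbitrary, $M^o$ is dense in $M$.

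I expect the only genuine point to be the packaging of Lemma \ref{lem3} into openness of the sublevel sets, since this is the single place where the geometry enters; everything else, including the attainment of the minimum and the passage to openness and density of $M^o$, is formal once $d$ is known to be a bounded, upper semi-continuous, integer-valued function. The same reasoning, applied to $p\longmapsto\dim\is^*(p)$, yields Proposition \ref{M^*reg}.
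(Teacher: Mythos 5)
Your proposal is correct and follows essentially the same route as the paper: both extract from Lemma \ref{lem3} the upper semicontinuity of $q\longmapsto\dim\is(q)$ and then observe that this bounded, integer-valued function attains a minimum on any open set, near which it must be locally constant, yielding an $\is$-regular point. Your repackaging via open sublevel sets is only a cosmetic reorganization of the paper's double application of Lemma \ref{lem3}, and your closing remark about $\is^*$ matches the paper's treatment of Proposition \ref{M^*reg}.
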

\begin{proof}
 $M^o$ is open by the very definition. Take a point $q_0\in M$ and let $U$ be its arbitrarily small neighborhood as in Lemma \ref{lem3}. Then $\dim\is(p)\leq \dim\is(q_0)$ for every $p\in M$. Now, let $q\in U$ be such that 
 \[
  \dim\is(q) = \min\{\dim\is(p):\;p\in U\} \text{,}
 \]
and let $V\subset U$ be a neighborhood of $q$ as in Lemma \ref{lem3}. By the choice of $q$ we have $\dim\is(q) = \dim\is(p)$ for every $p\in V$ showing that $q$ is $\is$-regular. Hence $M^o\cap U\ne\emptyset$ which ends the proof.
\end{proof}

Now take a point $q\in M$ and a smooth horizontal curve $\gamma\colon[0,1]\longrightarrow M$, $\gamma(0) = q$. For a given $(X,A,c)\in\is(q)$ let $(X(t),A(t),c(t))$ be the solution to the system
\begin{align}\label{rowXAc}
 \begin{array}{l}
  \nabla_{\dot{\gamma}(t)}X(t) = -A(t)\dot{\gamma}(t) \smallskip\\
  \nabla_{\dot{\gamma}(t)}A(t) = R(X(t), \dot{\gamma}(t)) \smallskip\\
  \nabla_{\dot{\gamma}(t)}c(t) = -d\alpha(X(t), \dot{\gamma}(t))
 \end{array}
\end{align}
with initial condition $(X,A,c)$ at $t=0$. 
\begin{lemma}
 Using the above notation $(X(t),A(t),c(t))\in\mathfrak{a}(\gamma(t))$ for every $t$.
\end{lemma}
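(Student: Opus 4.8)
The plan is to notice that, since $\mathfrak{a}(\gamma(t)) = H_{\gamma(t)}\oplus E_{\gamma(t)}\oplus\R$, only one of the three membership claims carries any content. The first equation of (\ref{rowXAc}) is an ODE for a section of $H$ along $\gamma$: as $\gamma$ is horizontal, $\dot{\gamma}(t)\in H_{\gamma(t)}$, so $A(t)\dot{\gamma}(t)\in H_{\gamma(t)}$, and $\nabla_{\dot{\gamma}(t)}$ sends sections of $H$ to sections of $H$; hence the solution $X(t)$ with $X(0)=X\in H_q$ satisfies $X(t)\in H_{\gamma(t)}$ for all $t$. Likewise $c(t)$ is scalar-valued, so $c(t)\in\R$ trivially. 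It thus remains to prove that $A(t)$ stays skew-symmetric with respect to $g$, i.e. $A(t)\in E_{\gamma(t)}$, given that $A(0)=A\in E_q$.

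First I would fix $t_0\in[0,1]$ and arbitrary $w_1,w_2\in H_{\gamma(t_0)}$, and extend them to parallel horizontal vector fields $u(t),v(t)$ along $\gamma$ (that is, $\nabla_{\dot{\gamma}}u=\nabla_{\dot{\gamma}}v=0$) with $u(t_0)=w_1$, $v(t_0)=w_2$; such extensions exist uniquely by solving the parallel-transport equation. Then I would introduce the scalar function
\[
 \phi(t) = g(A(t)u(t), v(t)) + g(u(t), A(t)v(t)),
\]
whose vanishing at $t_0$ is precisely the skew-symmetry condition on $A(t_0)$ tested against $w_1,w_2$.

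Next I would differentiate $\phi$. Using the metric compatibility (\ref{MetrCon}) together with the fact that $u,v$ are parallel, every term containing $\nabla_{\dot{\gamma}}u$ or $\nabla_{\dot{\gamma}}v$ drops out, and $\nabla_{\dot{\gamma}}(A(t)u(t)) = (\nabla_{\dot{\gamma}}A(t))\,u(t)$. Substituting the second equation of (\ref{rowXAc}), namely $\nabla_{\dot{\gamma}}A(t) = R(X(t),\dot{\gamma}(t))$, yields
\[
 \phi'(t) = g\big(R(X(t),\dot{\gamma}(t))u(t), v(t)\big) + g\big(u(t), R(X(t),\dot{\gamma}(t))v(t)\big).
\]
By Proposition \ref{skewR} the operator $R(X(t),\dot{\gamma}(t))$ is skew-symmetric with respect to $g$, so the right-hand side vanishes identically; hence $\phi'\equiv 0$ and $\phi$ is constant. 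Since $A(0)$ is skew-symmetric we have $\phi(0)=0$, whence $\phi\equiv 0$, and in particular $\phi(t_0)=0$. As $w_1,w_2\in H_{\gamma(t_0)}$ and $t_0$ were arbitrary, $A(t)$ is skew-symmetric for every $t$, which completes the argument.

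I do not anticipate a genuine obstacle: this is the standard \emph{parallel frame plus metric connection plus skew curvature} computation, and the only point needing mild care is the bookkeeping that makes $\phi'$ collapse to a pure curvature term before Proposition \ref{skewR} is applied. The essential input --- that the curvature operator is $g$-skew-symmetric --- is already at our disposal.
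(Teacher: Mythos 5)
Your proof is correct and follows essentially the same route as the paper: both reduce the claim to the skew-symmetry of $A(t)$, test it against $g$-parallel horizontal fields along $\gamma$, differentiate the resulting scalar function, and invoke Proposition \ref{skewR} to conclude it is constant and hence identically zero. You merely spell out the ``standard calculations'' that the paper leaves implicit.
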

\begin{proof}
 Again we need some preparatory remarks and then the proof will follow by standard argument \cite{Nom}.  
 
 Therefore it suffices to prove that for each fixed $t_0$ the endomorphism 
 \begin{align*}
 A(t_0)\colon H_{\gamma(t_0)}\longrightarrow H_{\gamma(t_0)} 
 \end{align*}
satisfies
 \begin{align}\label{r3}
  g(A(t_0)V,W) + g(V,A(t_0)W) = 0
 \end{align}
for all $V,W\in H_{\gamma(t_0)}$. To this end take two vectors $V,W\in H_{\gamma(t_0)}$ and consider two horizontal vector fields $V(t),W(t)$ along $\gamma$ such that $V(t)$ (resp. $W(t)$) is the solution to initial value problem $\nabla_{\dot{\gamma}(t)}V(t) = 0$, $V(t_0)=V$ (resp. $\nabla_{\dot{\gamma}(t)}W(t) = 0$, $W(t_0)=W$). Now, if we set $f(t) = g(A(t)V(t),W(t)) + g(V(t),A(t)W(t))$, then by standard calculations, using the properties of $V(t),W(t)$ and equations (\ref{rowXAc}), we obtain
\begin{gather*}
 \nabla_{\dot{\gamma}(t)}f(t) = g(R(X(t),\dot{\gamma}(t))V(t),W(t)) + g(V(t),R(X(t),\dot{\gamma}(t))W(t))
\end{gather*}
which is zero by Proposition \ref{skewR}. In particular $f(t_0) = f(0) = 0$ which gives (\ref{r3}).
\end{proof}

In order to determine whether or not $(X(t),A(t),c(t))\in\is(\gamma(t))$, let us consider the family of $\mathfrak{H}$-derivations $D(t)$ at $\gamma(t)$, $t\in[0,1]$, where $D(t)$ is induced by 
\begin{align*}
 \nabla_{X(t)+c(t)\xi(\gamma(t))} + A(t)\text{.}
\end{align*}
We will apply the derivation $D(t)$ only to tensor fields $S\in\mathfrak{H}$ of the form $S = \nabla^iR$ or $S = \nabla^id\alpha$, $i=0,1,\dots$. Clearly $t\longrightarrow D(t)S$ is a tensor field along $\gamma$. Below, for simplicity, we drop the dependence on $t$ in the notation.

\begin{lemma}\label{lem5}
 Let $U$ be a coordinate neighborhood, $q\in U$. Suppose that $V\in\Sec(U,TM)$ is a non-vanishing vector field and $Y_0\in H_q$. Then there exists $Y\in\Sec(U,H)$ such that (i) $Y(q) = Y_0$ and (ii) $P[V,Y] = 0$.
\end{lemma}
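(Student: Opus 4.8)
The plan is to reduce the problem to a first-order linear system of ordinary differential equations along the flow of the vector field $V$, whose solvability with the prescribed initial condition is standard. The condition $P[V,Y]=0$ says precisely that the horizontal part of $Y$ is invariant under the flow of $V$; equivalently, that $Y$ is obtained by transporting $Y_0$ along the integral curves of $V$ using the projected bracket as the transport operator.

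First I would use the hypothesis that $V$ is non-vanishing on $U$ to rectify it: by shrinking $U$ if necessary, choose coordinates $(x^1,\dots,x^{2n+1})$ on $U$ in which $V=\partial/\partial x^1$. Then $[V,Y]=\mathcal{L}_V Y$ is simply the coordinatewise derivative $\partial Y/\partial x^1$, and the condition $P[V,Y]=0$ becomes a linear first-order equation for the horizontal vector field $Y$. Writing $Y = Y^a X_a$ in terms of a local orthonormal horizontal frame $X_1,\dots,X_{2n}\in\Sec(U,H)$ (which exists after shrinking $U$), the requirement $P[V,Y]=0$ unwinds, via $P[V, Y^a X_a] = V(Y^a)X_a + Y^a P[V,X_a]$, into a system of the form
\begin{align*}
 \frac{\partial Y^a}{\partial x^1} = -c^a_b Y^b,
\end{align*}
where the coefficients $c^a_b$ are the smooth functions determined by $P[V,X_b] = c^a_b X_a$ on $U$.

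Next I would prescribe the initial data along the hypersurface $\{x^1 = x^1_0\}$ through the point $q$: set $Y^a$ on that slice so that $Y(q)=Y_0$, for instance by taking $Y^a$ to be the (constant-in-$x^1$) components of $Y_0$ in the frame, or more simply any smooth horizontal field on the slice agreeing with $Y_0$ at $q$. The linear system above, being an ODE in the variable $x^1$ with smooth coefficients, has a unique solution $Y^a(x^1,\dots,x^{2n+1})$ depending smoothly on all variables and on the initial data, by the standard existence, uniqueness and smooth-dependence theorem for linear ODEs. This yields $Y\in\Sec(U,H)$ (possibly after one more shrinking of $U$ to guarantee the solution is defined on the whole neighborhood) with $Y(q)=Y_0$, which is (i), and $P[V,Y]=0$ identically on $U$, which is (ii).

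I do not expect a genuine obstacle here; the only points requiring a little care are the reductions that make everything work simultaneously on one neighborhood. Specifically, the straightening of $V$, the existence of a local orthonormal horizontal frame, and the interval of existence of the ODE solution each may force $U$ to shrink, and one should note that the contact/sub-Riemannian structure plays no special role beyond providing the horizontal subbundle $H$ into which $P$ projects. The mildest subtlety is simply that $[V,Y]$ for general (non-horizontal) $V$ need not be horizontal, but that is exactly what the projection $P$ absorbs, so the computation $P[V, Y^a X_a] = V(Y^a)X_a + Y^a P[V,X_a]$ is valid and closes the argument.
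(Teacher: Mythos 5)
Your proof is correct and follows essentially the same route as the paper: rectify the non-vanishing field $V$ to $\partial/\partial x^1$, expand $Y$ in a local frame of $H$, and observe that $P[V,Y]=0$ becomes a linear first-order ODE system in $x^1$ for the components of $Y$, solvable with smooth dependence on the transverse coordinates and the initial condition $Y(q)=Y_0$. The only cosmetic difference is that the paper works with an arbitrary local basis of $H$ and writes out the structure functions explicitly (also noting that the $\xi$-component of $[V,Y]$ need not vanish), whereas you absorb that component via $P$ from the start.
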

\begin{proof}
 Let $e_1,\dots,2_{2n}$ be a basis of $H$ defined on $U$. Write $[e_i,e_j] = C^k_{ij}e_k + C^0_{ij}\xi$, $[\xi,e_j] = C^k_{0j}e_k$. Let, moreover, $V = V^ie_i + V^0\xi$, $Y_0 = Y^j_0e_j(q)$. We seek $Y$ in the form $Y=Y^je_j$. Then
 \begin{gather*}
  [V,Y] = \big(V(Y^k) + (V^iC^k_{ij} - e_j(V^k) + V^0C^k_{0j})Y^j \big)e_k + \\
  Y^j(V^iC^0_{ij} - e_j(V^0))\xi = I^ke_k + I^0\xi \text{.}
 \end{gather*}
 Now, if we choose coordinates $x^1,\dots,x^{2n+1}$ so that (locally) $V = \poch[x_1]$, the system $I^1 = 0,\dots I^{2n} = 0$ becomes a system of $2n$ linear ordinary differential equations with $2n$ unknown functions $Y^1,\dots,Y^{2n}$ depending on $2n$ parameters, so it has solutions satisfying $Y^j(q) = Y^0_j$. 
  
 On the other hand, $I^0 = 0$ may or may not hold at $q$, so in general $[V,Y]\ne 0$.
\end{proof}

\begin{lemma}\label{lem4}
 Under the above notation
 \begin{align*}
 \nabla_{\dot{\gamma}}(DS) = (D(\nabla S))(\dot{\gamma};)
 \end{align*}
 along $\gamma$.
\end{lemma}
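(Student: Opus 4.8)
The plan is to expand $D=\nabla_{X+c\xi}+A$ on both sides of the asserted identity and to match the resulting terms using the three equations of the family $(X(t),A(t),c(t))$ recorded in (\ref{rowXAc}). Write $W=X(t)+c(t)\xi(\gamma(t))\in T_{\gamma(t)}M$, so that $DS=\nabla_W S+AS$, where $\nabla_W$ is the full directional derivative of the horizontal tensor $S$ and $A$ denotes the $\mathfrak H$-derivation generated by the endomorphism $A(t)$. On the right-hand side one first forms $\nabla S\in\mathfrak H$ (whose new differentiation slot is an $H^*$-slot), applies $D$, and only then inserts $\dot\gamma$ into that slot. The essential observation is that the two sides differ only in the \emph{order} of the two covariant differentiations — the derivative $\nabla_{\dot\gamma}$ taken along the curve and the derivative $\nabla_W$ hidden inside $D$ — so the proof reduces to commuting them and checking that every correction term is absorbed by one of the equations in (\ref{rowXAc}). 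I would carry this out directly at the tensor level, using the Leibniz rule for $\nabla_{\dot\gamma}$ along $\gamma$; to give the iterated covariant derivative $\nabla^2S$ a clean meaning and to suppress extraneous bracket terms at the base point, one represents the horizontal vectors that occur by extensions chosen as in Lemma \ref{lem5}.

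First I would dispose of the purely algebraic pieces. Applying the derivation $A$ to the $H^*$-differentiation slot of $\nabla S$ produces, through $A\omega=-A^{*}\omega$, the contribution $-\nabla_{A\dot\gamma}S$, and this is exactly cancelled by the term arising from differentiating $X$ along $\gamma$, since $\nabla_{\dot\gamma}X=-A\dot\gamma$ by the first equation of (\ref{rowXAc}); the action of $A$ on the remaining slots matches verbatim on both sides. The heart of the argument is the curvature cancellation: commuting $\nabla_{\dot\gamma}$ past $\nabla_X$ yields, by (\ref{Rform}) together with the horizontal torsion-freeness from Theorem \ref{SRCon}, a term $R(\dot\gamma,X)S$ acting as an $\mathfrak H$-derivation on $S$. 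By antisymmetry $R(\dot\gamma,X)=-R(X,\dot\gamma)$, and this is killed by $(\nabla_{\dot\gamma}A)S=R(X,\dot\gamma)S$, which is precisely the second equation of (\ref{rowXAc}); that $R(X,\dot\gamma)$ is a legitimate derivation (skew on $H$) is guaranteed by Proposition \ref{skewR}.

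The step I expect to be the real obstacle — and the only genuinely sub-Riemannian one, absent from Nomizu's Riemannian computation — is the bookkeeping of the non-horizontal summand $c\xi$ of $W$, which forces $\nabla_W=\nabla_X+c\,\nabla_\xi$. Because the connection is torsion-free only in the horizontal sense, $\nabla_UV-\nabla_VU=P[U,V]$, commuting the two horizontal derivatives leaves, besides the curvature, a residual term along $\xi$ equal to $-d\alpha(\dot\gamma,X)\,\nabla_\xi S$ (the surviving $\xi$-component of the bracket, identified via $\alpha([\,\cdot,\cdot\,])=-d\alpha(\cdot,\cdot)$ on horizontal fields); this cancels exactly the contribution $\dot c\,\nabla_\xi S=+d\alpha(\dot\gamma,X)\,\nabla_\xi S$ coming from the third equation $\nabla_{\dot\gamma}c=-d\alpha(X,\dot\gamma)$. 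There remains the pure $c\,\nabla_\xi$ block, for which one commutes $\nabla_{\dot\gamma}$ with $\nabla_\xi$: this produces a curvature term $R(\xi,\dot\gamma)S$ and a bracket term of the form $\nabla_{[\xi,\widetilde{\dot\gamma}]}S-\nabla_{\nabla_\xi\widetilde{\dot\gamma}}S$, and both vanish — the curvature by $R(\xi,\cdot)=0$ (Proposition \ref{prop2}), and the bracket term because the vertical torsion vanishes, $\nabla_\xi X=[\xi,X]$ for $X\in\Sec(H)$ (Lemma \ref{corVertTor} and the remark following it). With all three groups of correction terms cancelled, the two sides agree, which is the assertion of the lemma.
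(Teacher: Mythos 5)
Your proof is correct, and it reaches the identity by a genuinely different mechanism than the paper's own proof, which follows the classical reduction of Kostant \cite{Kostant} and Nomizu \cite{Nom}: there, both sides are shown to equal the common expression $D(\nabla_{V}S)-\nabla_{DV}S$. The paper disposes of the non-horizontal summand $c\xi$ wholesale rather than by cancellation: since $S=\nabla^{i}R$ or $S=\nabla^{i}d\alpha$ and $\xi$ is an infinitesimal isometry, Proposition \ref{LieR} combined with the vanishing vertical torsion gives $\nabla_{\xi}S=\mathcal{L}_{\xi}S=0$ identically, and Lemma \ref{lem5} is invoked precisely to extend $\dot{\gamma}(t)$ to a horizontal $V$ with $[\xi,V]=P[\xi,V]=0$, which also kills $\nabla_{\xi}(\nabla_{V}S)$ and $\nabla_{\xi}V$. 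Consequently every $\xi$-term in the paper's computation vanishes at the source, and only the first two equations of (\ref{rowXAc}) are actually used; the third never enters. You, by contrast, keep all the $\nabla_{\xi}S$ terms alive and cancel them in pairs: the $\xi$-component $-d\alpha(\dot{\gamma},X)\,\nabla_{\xi}S$ of the bracket against $\dot{c}\,\nabla_{\xi}S$ via the third equation of (\ref{rowXAc}), and the $c\,[\nabla_{\dot{\gamma}},\nabla_{\xi}]$ block via $R(\xi,\cdot)=0$ (Proposition \ref{prop2}) together with the vanishing vertical torsion; I checked the signs and the bookkeeping closes, with the curvature term and the $-\nabla_{A\dot{\gamma}}S$ term matching exactly as you describe. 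The trade-off is instructive: the paper's route is shorter but is tied to the special tensors $\nabla^{i}R$, $\nabla^{i}d\alpha$ (which is all that is needed downstream), whereas your route requires no special choice of extension (any horizontal extension of $\dot{\gamma}$ works, so the appeal to Lemma \ref{lem5} in your sketch is actually superfluous) and, since it nowhere uses Proposition \ref{LieR}, it proves the identity for an \emph{arbitrary} horizontal tensor field $S\in\mathfrak{H}$, not only for the covariant derivatives of $R$ and $d\alpha$ --- a strictly more general conclusion, bought at the price of invoking the third equation of (\ref{rowXAc}) and Proposition \ref{prop2}, which the paper's argument can avoid.
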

The right-hand side in the conclusion of Lemma \ref{lem4} is to be understood in the following way. Let $\mathfrak{H}^{r,s}$ stand for the set of horizontal tensor fields of type $(r,s)$ and take $S\in\mathfrak{H}^{r,s}$. Then $S$ can be viewed as a multilinear mapping $S\colon\Sec(H)\times\dots\times\Sec(H)\longrightarrow\mathfrak{H}^{r,0}$ ($s$ factors). In turn $\nabla S\colon\Sec(H)\times\dots\times\Sec(H)\times\Sec(H)\longrightarrow\mathfrak{H}^{r,0}$ ($s+1$ factors) and $(\nabla S)(V;)$ simply stands for the mapping $\nabla_VS\colon\Sec(H)\times\dots\times\Sec(H)\longrightarrow\mathfrak{H}^{r,0}$ ($s$ factors).
\begin{proof}[Proof of Lemma \ref{lem4}.]
 The argument is essentially the same as in the classical case with one exception. 
 
 Fix $t$. Using Lemma \ref{lem5} let us extend the vector $\dot{\gamma}(t)$ to a vector field $V\in\Sec(U,H)$, $U$ is a neighborhood of $\gamma(t)$, such that $[\xi,V] = P[\xi,V] = 0$. We also extend $X$, $A$ and $c$, respectively, to elements of $\Sec(U,H)$, $\Sec(U,H^*\otimes H)$, $C^{\infty}(U)$. The proof that $(D(\nabla(S))(V;) = D(\nabla_VS) - \nabla_{DV}S$ goes without changes as compared to the classical case. We will show $\nabla_V(DS) = D(\nabla_VS) - \nabla_{DV}S$. So we have
 \begin{gather*}
 \nabla_V(DS) =  \nabla_V(\nabla_XS) + \nabla_V(AS) = \nabla_V(\nabla_XS) + (\nabla_VA)S + A(\nabla_VS) = \\
 \nabla_V(\nabla_XS) + R(X,V)S + A(\nabla_VS) =\\
 \nabla_X(\nabla_VS) - \nabla_{[X,V]}S + A(\nabla_VS) = \\
 D(\nabla_VS) - \nabla_{[X,V]}S = D(\nabla_VS) - \nabla_{P[X,V]}S \text{,}
 \end{gather*}
 where we used Proposition \ref{LieR} ($\nabla_{\xi} = \mathcal{L}_{\xi}$) and equations (\ref{rowXAc}). Now it is enough to notice, again using equations (\ref{rowXAc}), that
 \begin{gather*}
 P[X,V] = \nabla_XV - \nabla_VX =  \nabla_{X+c\xi}V + AV - \nabla_{c\xi}V = DV - c[\xi,V] = DV\text{.}  
 \end{gather*}
 Thus the formula from the statement of the lemma is proved at $\gamma(t)$. 
\end{proof}

Again let $S=\nabla^iR$ or $S=\nabla^id\alpha$. Denote by $F(t) = D(t)(S)$ and write $V(t) = \dot{\gamma}(t)$. Then, by the above discussion, we have (again we omit dependence on $t$):
\begin{gather*}
 \dot{F} = \nabla_V(DS) = (D(\nabla S))(V;)\text{,}
\end{gather*}
\begin{gather*}
 \ddot{F} = \nabla_V\big((D(\nabla S))(V;)\big) = (\nabla_VD(\nabla S))(V;) + (D(\nabla S))(\nabla_VV;) = \\
 (D(\nabla^2 S))(V;V;) + (D(\nabla S))(\nabla_VV;)
\end{gather*}
and so on.
In general we express $F^{(m)}$ as a combination of $D(\nabla^iS)(Y_1;\dots Y_i;)$ for suitable vector fields $Y_1,\dots,Y_i\in\Sec(H)$, $i\leq m$. 
\begin{proposition}
 Under the above notation. If $(M,H,g)$ is an analytic special sub-Riemannian manifold, $\gamma\colon[0,1]\longrightarrow M$ is an analytic curve, and $(X(t),A(t),c(t))$ is defined as the solution to the system (\ref{rowXAc}) with $(X(0),A(0),c(0))\in\is(\gamma(0))$, then $(X(t),A(t),c(t))\in\is(\gamma(t))$ for every $t$.
\end{proposition}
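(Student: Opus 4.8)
The plan is to exploit analyticity to reduce the claim to showing that all derivatives of the relevant quantities vanish at $t=0$. Define, for each $i=0,1,\dots$, the tensor field $F_i(t) = D(t)(\nabla^iR)$ along $\gamma$, and similarly $G_i(t) = D(t)(\nabla^id\alpha)$. The hypothesis $(X(0),A(0),c(0))\in\is(\gamma(0))$ says precisely that $F_i(0) = 0$ and $G_i(0) = 0$ for every $i$. The conclusion $(X(t),A(t),c(t))\in\is(\gamma(t))$ for all $t$ is equivalent to $F_i(t)\equiv 0$ and $G_i(t)\equiv 0$ for all $i$ and all $t$. Since everything in sight is analytic (the manifold, the connection, the curve, hence the solution of the analytic linear system (\ref{rowXAc})), each $F_i$ and $G_i$ is an analytic tensor field along $\gamma$. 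An analytic function vanishing identically is equivalent to the vanishing of all its derivatives at a single point, so it suffices to prove $F_i^{(m)}(0) = 0$ and $G_i^{(m)}(0) = 0$ for all $i,m\geq 0$.

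First I would record the covariant-derivative recursion supplied by Lemma \ref{lem4}, namely $\nabla_{\dot\gamma}(D(\nabla^iS)) = (D(\nabla^{i+1}S))(\dot\gamma;)$ for $S$ of the form under consideration. Iterating this, as in the computation of $\dot F$, $\ddot F$ preceding the statement, expresses the $m$-th covariant derivative $F_i^{(m)}$ as a universal expression in the quantities $D(\nabla^jR)(Y_1;\dots;Y_j;)$ with $j\le i+m$, evaluated on vector fields $Y_1,\dots,Y_j$ obtained from $\dot\gamma$ and its covariant derivatives along $\gamma$; the same holds for $G_i^{(m)}$ in terms of $D(\nabla^jd\alpha)$. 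The key point is that every term appearing in $F_i^{(m)}$ is of the form $D(\nabla^jR)$ applied to some horizontal arguments, with $j$ ranging over a finite set.

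Next I would argue by a double induction (on $m$, and using the full hypothesis) that all these building blocks vanish at $t=0$. By assumption $(X(0),A(0),c(0))\in\is(\gamma(0)) = \bigcap_j\is_j(\gamma(0))$, which means $(D(0))(\nabla^jR) = 0$ and $(D(0))(\nabla^jd\alpha) = 0$ for every $j\ge 0$ as full tensors at $\gamma(0)$; in particular each $D(\nabla^jR)(Y_1;\dots;Y_j;)$ vanishes at $t=0$ for any choice of arguments. Feeding this into the universal expression for $F_i^{(m)}(0)$ and $G_i^{(m)}(0)$ shows that every summand vanishes, hence $F_i^{(m)}(0)=0$ and $G_i^{(m)}(0)=0$ for all $i,m$. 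By analyticity this forces $F_i\equiv 0$ and $G_i\equiv 0$ along $\gamma$, which is exactly the assertion that $(X(t),A(t),c(t))\in\is(\gamma(t))$ for every $t$.

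The main obstacle, and the place where the sub-Riemannian setting genuinely differs from Nomizu's Riemannian argument, is the bookkeeping hidden in Lemma \ref{lem4} and the choice of extensions made there. Because $\nabla^iR$ and $\nabla^id\alpha$ are only well defined as elements of $\mathfrak{H}$ (the non-horizontal components being discarded, as emphasized at the end of Section \ref{sRCON}), one must be careful that the iterated application of $\nabla_{\dot\gamma}$ never produces non-horizontal slots, and that the auxiliary fields $V$, $X$, $A$, $c$ can be extended as in the proof of Lemma \ref{lem4} with $P[\xi,V]=0$ so that the identity $\nabla_V(DS) = D(\nabla_VS)-\nabla_{DV}S$ holds verbatim. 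Once these extensions are in place and one notes that $\nabla_\xi = \mathcal{L}_\xi$ on $\mathfrak{H}$ (Proposition \ref{LieR}) so that the Reeb direction contributes nothing spurious, the recursion closes and the analytic continuation argument goes through essentially as in \cite{Nom}.
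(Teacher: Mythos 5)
Your proposal is correct and follows essentially the same approach as the paper: iterate the recursion of Lemma \ref{lem4} to express each derivative $F_i^{(m)}(0)$ (and $G_i^{(m)}(0)$) in terms of $D(\nabla^jR)$ and $D(\nabla^jd\alpha)$ applied to horizontal arguments, observe that these all vanish at $t=0$ because the initial datum lies in $\is(\gamma(0)) = \bigcap_j\is_j(\gamma(0))$, and invoke analyticity of the data along $\gamma$ to conclude $F_i\equiv 0$ and $G_i\equiv 0$. The paper's proof is exactly this argument, stated more tersely, so there is no gap to report.
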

\begin{proof}
 Fix an arbitrary integer $i$. The curve $F_1(t) = D(t)(\nabla^iR)$ is analytic, and by above computations $F_1^{(m)}(0) = 0$ for every $m=0,1,\dots$. Thus $F_1(t) = 0$ on $[0,1]$. Exactly the same argument applies to the curve $F_2(t) = D(t)(\nabla^id\alpha)$.
\end{proof}

 \subsection{Proof of Theorem \ref{th3}}
 Now we a ready to prove Theorem \ref{th3}. Fix an arbitrary point $q_0\in M$ and let $U$ be a coordinate neighborhood of $q_0$. For a point $q\in U$ we construct a mapping $\is(q_0)\longrightarrow\is(q)$ in the following way. Fix an analytic horizontal curve $\gamma\colon [0,1]\longrightarrow U$ such that $\gamma(0)=q_0$, $\gamma(1)=q$. Starting from an arbitrary element $(X,A,c)\in\is(q_0)$ we construct the family $(X(t),A(t),c(t))\in\is(\gamma(t))$ as the solution to the system (\ref{rowXAc}) with initial condition $(X(0),A(0),c(0)) = (X,A,c)$. We define the mentioned mapping as the assignment  
 \begin{align*}
 \is(q_0)\ni(X,A,c)\longrightarrow (X(1),A(1),c(1))\in\is(q) \text{.} 
 \end{align*}
Such a map is clearly a linear isomorphism which shows that $q\longrightarrow\dim\is(q)$ is constant on $U$, that is to say $q_0$ is $\is$-regular.

\bigskip

\subsection{Proof of Theorem \ref{th2}}
We will divide the proof into several steps. We start with the following lemma.
\begin{lemma}
 Suppose that $(M,H,g)$ is smooth. Let $q_0$ be an $\is$-regular point. There exists a neighborhood $U$ of $q_0$ with the following property. Take a smooth horizontal curve $\gamma\colon[0,1]\longrightarrow U$, $\gamma(0)=q_0$, and starting from an element $(X,A,c)\in\is(q_0)$ construct the family $(X(t),A(t),c(t))$ along $\gamma$ by use of equations (\ref{rowXAc}). Then $(X(t),A(t),c(t))\in\is(\gamma(t))$ for every $t$.
\end{lemma}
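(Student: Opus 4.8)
The plan is to reduce the claim to the uniqueness of solutions of a homogeneous linear ordinary differential equation, using the $\is$-regularity of $q_0$ to close an otherwise non-terminating hierarchy of tensorial identities.

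\textbf{Choice of $U$ and uniform stabilization.} First I would apply Lemma \ref{lem3}, which produces an integer $m$ and a neighborhood $U$ of $q_0$ with $\dim\is(q)\leq\dim\is_m(q)\leq\dim\is_m(q_0)=\dim\is(q_0)$ for all $q\in U$. Since $q_0$ is $\is$-regular, after shrinking $U$ we may assume $\dim\is(q)=\dim\is(q_0)$ on $U$, so every inequality becomes an equality. Because $\is(q)\subset\is_{m+1}(q)\subset\is_m(q)$ with equal dimensions, we conclude the \emph{uniform stabilization} $\is(q)=\is_{m+1}(q)=\is_m(q)$ for every $q\in U$. Recall from the proof of Lemma \ref{lem3} that $\is_m(q)=\ker f_q$ for the linear map $f_q\colon\mathfrak{a}(q)\longrightarrow S(q)$ assembled from the functionals $(X,A,c)\mapsto D_{(X,A,c)}(\nabla^iR),\,D_{(X,A,c)}(\nabla^id\alpha)$, $i\leq m$. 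Constancy of $\dim\ker f_q=\dim\is(q_0)$ means that $f_q$ has constant rank on $U$, hence $q\mapsto\mathrm{im}\,f_q$ is a smooth subbundle over $U$.

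\textbf{The differential system along $\gamma$.} Fix a smooth horizontal curve $\gamma\colon[0,1]\to U$ with $\gamma(0)=q_0$, solve (\ref{rowXAc}) with initial data $(X,A,c)\in\is(q_0)$, and let $D(t)$ be the $\mathfrak{H}$-derivation at $\gamma(t)$ induced by $\nabla_{X(t)+c(t)\xi(\gamma(t))}+A(t)$; by the preceding lemma $(X(t),A(t),c(t))\in\mathfrak{a}(\gamma(t))$. Set $G_i(t)=D(t)(\nabla^iR)$ and $H_i(t)=D(t)(\nabla^id\alpha)$ for $0\leq i\leq m$, and let $W(t)$ be the tuple of all these, so that $W(t)=f_{\gamma(t)}(X(t),A(t),c(t))$. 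Since $(X,A,c)\in\is(q_0)=\is_m(q_0)=\ker f_{q_0}$ we have $W(0)=0$, and $W$ is smooth because the solution of (\ref{rowXAc}) is smooth and $f$ depends smoothly on the base point. Lemma \ref{lem4}, whose hypotheses are met pointwise along $\gamma$ after extending $\dot\gamma$ via Lemma \ref{lem5}, yields $\nabla_{\dot\gamma}G_i=(G_{i+1})(\dot\gamma;\cdot)$ and $\nabla_{\dot\gamma}H_i=(H_{i+1})(\dot\gamma;\cdot)$. For $i<m$ this expresses the covariant derivative of an entry of $W$ through another entry of $W$, but for $i=m$ it introduces the order-$(m+1)$ terms $G_{m+1},H_{m+1}$, and the chain does not close on its own.

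\textbf{Closure and conclusion.} Here the regularity enters decisively. Since $\is_{m+1}(q)=\is_m(q)=\ker f_q$ on $U$, the order-$(m+1)$ functional $(X,A,c)\mapsto\big(D(\nabla^{m+1}R),D(\nabla^{m+1}d\alpha)\big)$ vanishes on $\ker f_q$ and therefore factors as $\Lambda_q\circ f_q$ for a unique linear map $\Lambda_q$ on $\mathrm{im}\,f_q$; by the constant-rank conclusion of the first step, $\Lambda_q$ can be chosen to depend smoothly on $q\in U$. Consequently $G_{m+1}(t)=\Lambda_{\gamma(t)}(W(t))$, and likewise for $H_{m+1}$, so that the two top equations become linear in the entries of $W$ and the whole collection reduces to a closed first-order system $\nabla_{\dot\gamma}W=M(t)W$ with $M$ smooth in $t$. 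Trivializing the relevant tensor bundles along $\gamma$ by a parallel frame converts this into an ordinary linear equation $\dot w=\widetilde{M}(t)w$ in a fixed Euclidean space; since $w(0)=0$, uniqueness forces $w\equiv0$, hence $W\equiv0$. This means $f_{\gamma(t)}(X(t),A(t),c(t))=0$, i.e.\ $(X(t),A(t),c(t))\in\is_m(\gamma(t))=\is(\gamma(t))$ for all $t$. The main obstacle is precisely this closure: one needs not merely that the order-$(m+1)$ conditions are pointwise redundant (the content of $\is_{m+1}=\is_m$), but that the factorization $\Lambda_q$ is sufficiently regular in $q$ for the ODE-uniqueness argument to apply, and it is the constant-rank hypothesis supplied by $\is$-regularity that secures this.
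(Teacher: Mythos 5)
Your proof is correct and takes essentially the same approach as the paper: the paper's own proof is nothing but the citation ``Cf.\ \cite[Lemma 12]{Nom}'', and what you have written out --- uniform stabilization $\is(q)=\is_{m+1}(q)=\is_m(q)=\ker f_q$ on a neighborhood obtained by combining Lemma~\ref{lem3} with $\is$-regularity, followed by closing the covariant ODE system for the quantities $D(t)(\nabla^iR)$, $D(t)(\nabla^id\alpha)$, $i\leq m$, through the constant-rank factorization of the order-$(m+1)$ conditions and concluding by uniqueness for linear ODEs --- is exactly Nomizu's argument, correctly transplanted to the sub-Riemannian setting via Lemmas~\ref{lem4} and~\ref{lem5}. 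In effect you have supplied the details that the paper leaves to the reference.
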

\begin{proof}
 Cf. \cite[Lemma 12]{Nom}.
\end{proof}

Let $\varphi$ be a (local) isometry of $(M,H,g)$. Take $q_0\in M$ and set $q_1 = \varphi(q_0)$. Then we have an induced transformation which we will denote by $\varphi_*\colon \is(q_0)\longrightarrow\mathfrak{a}(q_1)$ and which acts as follows. To every $(X,A,c)\in\is(q_0)$ we assign an element $(\bar{X},\bar{A},\bar{c})$ of $\mathfrak{a}(q_1)$: $\bar{X} = d_{q_0}\varphi(X)$, $\bar{A} = d_{q_0}\varphi\circ A\circ (d_{q_0}\varphi)^{-1}$, $\bar{c} = c$ (evidently $\bar{A}$ is skew-symmetric). Take $S = \nabla^iR$ or $S = \nabla^id\alpha$. Clearly $\varphi_*S = S$, where $\varphi_*$ stands for the induced action on tensors. By definition, $(\nabla_{X + c\xi(q_0)} + A)S = 0$, and after computations
\begin{gather*}
 (\nabla_{\bar{X} + \bar{c}\xi(q_1)} + \bar{A})S = \varphi_*\big((\nabla_{X + c\xi(q_0)} + A)S \big) = 0\text{.}
\end{gather*}
In this way we have proved
\begin{proposition}\label{prop5}
If $\varphi$ is a local isometry, then
 \begin{align*}
  \varphi_*\colon \is(q)\longrightarrow\is(\varphi(q))
 \end{align*}
is a linear isomorphism for every $q$ belonging to the domain of $\varphi$.
\end{proposition}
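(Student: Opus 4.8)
The plan is to verify three things in turn: that $\varphi_*$ genuinely maps $\is(q)$ into $\is(\varphi(q))$ (well-definedness), that it is linear, and that it is a bijection. The first is the substantive point and is essentially carried out in the computation displayed immediately before the statement; I would organize it as follows. A local isometry preserves every ingredient entering the definition of $\is$. By Theorem \ref{SRCon} the connection $\nabla$ is the \emph{unique} metric torsion-free sub-Riemannian connection, hence is intrinsic to $(H,g)$ and is preserved by $\varphi$, giving $\varphi_*(\nabla_Z W) = \nabla_{\varphi_* Z}(\varphi_* W)$. Consequently $\varphi$ preserves the curvature $R$ and, since $\varphi^*\alpha = \pm\alpha$, each $\nabla^i d\alpha$ up to a global sign. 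Because $d\varphi(H) = H$, the pushforward of a horizontal tensor is again horizontal, so $\varphi_*$ respects the algebra $\mathfrak{H}$ and the convention of treating $\nabla^i R$ and $\nabla^i d\alpha$ as elements of $\mathfrak{H}$.

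The key algebraic fact is that pushforward intertwines the two $\mathfrak{H}$-derivations: for $S = \nabla^i R$ or $S = \nabla^i d\alpha$ one has $\varphi_*\bigl((\nabla_{X+c\xi(q)}+A)S\bigr) = (\nabla_{\bar X+\bar c\,\xi(\varphi(q))}+\bar A)(\varphi_* S)$, where $(\bar X,\bar A,\bar c)$ is defined as above. Since $\varphi_* S = \pm S$ and the hypothesis $(X,A,c)\in\is(q)$ makes the left-hand argument vanish, the triple $(\bar X,\bar A,\bar c)$ satisfies all the defining equations, so it lies in $\is(\varphi(q))$. Linearity is then immediate, since $\bar X = d_q\varphi(X)$, $\bar A = d_q\varphi\circ A\circ(d_q\varphi)^{-1}$, and $\bar c = c$ each depend linearly on $(X,A,c)$.

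For bijectivity I would invoke functoriality. The restriction of $\varphi^{-1}$ to $\varphi(U)$ is again a local isometry, so the same argument produces a well-defined linear map $(\varphi^{-1})_*\colon \is(\varphi(q))\to\is(q)$. From the explicit formulas one checks directly that $(\varphi^{-1})_* \circ \varphi_* = \mathrm{id}_{\is(q)}$ and $\varphi_* \circ (\varphi^{-1})_* = \mathrm{id}_{\is(\varphi(q))}$, whence $\varphi_*$ is invertible and therefore a linear isomorphism.

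The main obstacle is the intertwining identity of the second paragraph; once one knows that isometries preserve $\nabla$ (and hence $R$) and preserve $\alpha$ up to sign, everything reduces to formal manipulation of $\mathfrak{H}$-derivations. The only point that looks delicate, the sign ambiguity in $\varphi^*\alpha = \pm\alpha$ and hence in $\varphi_*\xi = \pm\xi$, is in fact harmless: on a special manifold $\xi$ is a Killing field, so by Proposition \ref{LieR} one has $\nabla_\xi(\nabla^i R) = \mathcal{L}_\xi(\nabla^i R) = 0$ and likewise for $\nabla^i d\alpha$. Thus the $\xi$-contribution $\nabla_{\bar c\,\xi}S$ drops out entirely, which both explains the choice $\bar c = c$ and shows that neither that sign nor the sign in $\varphi_*(\nabla^i d\alpha)$ affects the vanishing of the relevant expressions.
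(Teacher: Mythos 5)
Your proof is correct and takes essentially the same route as the paper: the paper defines $\varphi_*$ by exactly the same formulas for $(\bar{X},\bar{A},\bar{c})$ and obtains well-definedness from the identity $(\nabla_{\bar{X}+\bar{c}\xi(\varphi(q))}+\bar{A})S = \varphi_*\big((\nabla_{X+c\xi(q)}+A)S\big) = 0$ for $S=\nabla^iR$ or $S=\nabla^id\alpha$, leaving linearity and invertibility (via $\varphi^{-1}$) implicit. If anything, your handling of the sign ambiguity in $\varphi_*\xi$ and $\varphi_*(\nabla^id\alpha)$, using that the $\nabla_{\xi}$-contribution vanishes since $\xi$ is Killing, is more careful than the paper's bare assertion that $\varphi_*S=S$.
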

Let us go further. Take a smooth horizontal curve $\gamma\colon[0,1]\longrightarrow M$ contained in the domain of $\varphi$, $\gamma(0) = q_0$. Take $(X_0,A_0,c_0)\in\is(q_0)$. Construct two families $(X(t),A(t),c(t))$ and $(\bar{X}(t),\bar{A}(t),\bar{c}(t))$: $(X(t),A(t),c(t))$ is the solution of (\ref{rowXAc}) along $\gamma$ with initial condition $(X_0,A_0,c_0)$ at $t=0$, and $(\bar{X}(t),\bar{A}(t),\bar{c}(t))$ is the solution of (\ref{rowXAc}) along $\varphi\circ\gamma$ with initial condition $\varphi_*(X_0,A_0,c_0)$ at $t=0$. Then, by properties of isometries, we have 
\begin{align}\label{IsomPreser}
 \varphi_*(X(t),A(t),c(t)) = (\bar{X}(t),\bar{A}(t),\bar{c}(t)) \text{,}
\end{align}
$t\in [0,1]$.
\smallskip

Fix a point $q_0$ and its suitably small neighborhood $U$. Denote by $\delta$ the trajectory of $\xi$ such that $\delta(0) = q_0$. Using \cite{AgrIsomp} or \cite{GroSinger} there are coordinates $x^1,\dots,x^{2n+1}$ on $U$ centered at $q_0$ such that 
\begin{align*}
 \xi_{|\delta}  = \frac{\partial}{\partial x^{2n+1}}, \; H_{|\delta} = \Sp\Big\{\frac{\partial}{\partial x^1},\dots,\frac{\partial}{\partial x^{2n}}\Big\} \text{,}
\end{align*}
and $\frac{\partial}{\partial x^1},\dots,\frac{\partial}{\partial x^{2n}}$ from an orthonormal basis for $H$ at points belonging to $\delta$. For a point $q\in U$ denote by $\gamma_q:[0,1]\longrightarrow U$ the curve corresponding in our coordinates to the curve $t\longrightarrow (tx^1(q),\dots,tx^{2n}(q),x^{2n+1}(q))$. Clearly, $\gamma_q(0)\in\delta$, and it can be proved that $\gamma_q$ is a unique minimizing sub-Riemannian geodesic joining $q$ to $\delta$ - see \cite{GroSinger}. As a consequence, $\varphi^s(\gamma_q(t)) = \gamma_{\varphi^s(q)}(t)$, where $\varphi^s$ is the local (flow) of $\xi$. 

Now, starting from an element $(X_0,A_0,c_0)\in\is(q_0)$ we build a field $U\ni q\longrightarrow (X(q),A(q),c(q))\in\is(q)$ smoothly depending on $q$. Namely, in order to obtain $(X(q),A(q),c(q))\in\is(q)$ we solve equations (\ref{rowXAc}) along $\gamma_q$ with initial conditions at $t=0$ equal to $\varphi^s_*(X_0,A_0,c_0)$, where $s$ is such that $\delta(s) = \gamma_q(0)$. If $(X_q(t),A_q(t),c_q(t))$ is the solution, then we set $(X(q),A(q),c(q)) = (X_q(1),A_q(1),c_q(1))$. Using (\ref{IsomPreser}) and what we have said above we see that $[\xi,X] = 0$ and $\mathcal{L}_{\xi}A = 0$. \smallskip

Next we will prove that the equations
\begin{align}\label{EqsY}
 \begin{array}{l}
  \nabla_YX = -AY \smallskip\\
  \nabla_YA = R(X,Y) \smallskip\\
  \nabla_Yc = -d\alpha(X, Y)
 \end{array}
\end{align}
hold on $U$ for every $Y\in\Sec(U,H)$. It is enough to prove (\ref{EqsY}) for every vector $Y\in H_q$, $q\in U$. To this end fix $q\in U$ and $Y\in H_q$. Extend $\dot{\gamma}_q(t)$, $t\in[0,1]$, to a non-vanishing vector field $V\in\Sec(U,H)$. Then, in view of Lemma \ref{lem5}, we can extend $Y$ to a vector field $Y\in\Sec(U,H)$ (denoted by the same letter) such that $P[V,Y] = 0$. In particular, the system of equations
 \begin{align}\label{rowLemm}
 \begin{array}{l}
  \nabla_VX = -AV \smallskip\\
  \nabla_VA = R(X,V) \smallskip\\
  \nabla_Vc = -d\alpha(X, V)
 \end{array}
\end{align}
holds along $\gamma_q$. Now we will prove two lemmas (cf. \cite{Nom}).  

\begin{lemma}\label{LemEq1}
 \[
 \nabla_V(\nabla_YX + AY) = (R(X,Y) - \nabla_YA)V
 \]
holds along $\gamma_q$.
\end{lemma}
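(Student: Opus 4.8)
The plan is to expand the left-hand side directly and then reduce every term using the ingredients available along $\gamma_q$: the defining system (\ref{rowLemm}), the torsion-freeness of $\nabla$ (Theorem \ref{SRCon}), and the curvature formula (\ref{Rform}) together with the first Bianchi identity of Proposition \ref{Bianchi}. First I would write
\[
\nabla_V(\nabla_Y X + AY) = \nabla_V\nabla_Y X + (\nabla_V A)Y + A\,\nabla_V Y,
\]
and substitute $\nabla_V A = R(X,V)$ from (\ref{rowLemm}) into the middle term, so that $(\nabla_V A)Y = R(X,V)Y$.

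For the term $\nabla_V\nabla_Y X$ I would invoke the curvature formula (\ref{Rform}), which gives
\[
\nabla_V\nabla_Y X = R(V,Y)X + \nabla_Y\nabla_V X + \nabla_{[V,Y]}X.
\]
Here two simplifications occur. Since $V$ extends the radial geodesic field, the first equation of (\ref{rowLemm}) holds throughout the neighborhood swept by the curves $\gamma_{q'}$, not merely along $\gamma_q$, so $\nabla_V X = -AV$ may be differentiated in the direction $Y$, yielding $\nabla_Y\nabla_V X = -(\nabla_Y A)V - A\,\nabla_Y V$. Moreover $P[V,Y] = 0$ by the choice of $Y$ (Lemma \ref{lem5}), so $[V,Y]\in\Sp\{\xi\}$; writing $[V,Y] = h\xi$ and using $\nabla_\xi X = [\xi,X]$ (Lemma \ref{corVertTor}) together with $[\xi,X]=0$ established just above, the term $\nabla_{[V,Y]}X$ vanishes.

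Collecting the pieces, the expression becomes
\[
R(V,Y)X + R(X,V)Y - (\nabla_Y A)V + A(\nabla_V Y - \nabla_Y V).
\]
Because $\nabla$ is torsion-free, $\nabla_V Y - \nabla_Y V = P[V,Y] = 0$, so the last term drops out. Finally I would apply the first Bianchi identity of Proposition \ref{Bianchi} in the form $R(X,V)Y + R(V,Y)X + R(Y,X)V = 0$, which gives $R(V,Y)X + R(X,V)Y = R(X,Y)V$. This leaves exactly $(R(X,Y) - \nabla_Y A)V$, as claimed.

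I expect the main obstacle to be the bookkeeping around $\nabla_V\nabla_Y X$: one must be sure that $\nabla_V X = -AV$ is valid in a full neighborhood (not just along the single curve) so that it can be differentiated transversally, and one must correctly dispose of $\nabla_{[V,Y]}X$, which is precisely where the special structure enters through $[\xi,X]=0$ and the vanishing vertical torsion $\nabla_\xi X = [\xi,X]$. All remaining manipulations are routine applications of the metric, torsion-free, and Bianchi properties of the sub-Riemannian connection.
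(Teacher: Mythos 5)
Your proof is correct and follows essentially the same route as the paper's: expand by Leibniz, commute $\nabla_V\nabla_YX$ via the curvature formula (\ref{Rform}), substitute the two equations of (\ref{rowLemm}), dispose of $\nabla_{[V,Y]}X$ using $[V,Y]=f\xi$ together with $\nabla_\xi X=[\xi,X]=0$, and finish with torsion-freeness and the first Bianchi identity. The only difference is that you explicitly justify differentiating $\nabla_VX=-AV$ in the transversal direction $Y$ (by noting that (\ref{rowLemm}) holds on the whole family of radial curves $\gamma_{q'}$, not just along $\gamma_q$), a point the paper's computation uses implicitly.
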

\begin{proof}
  We have along $\gamma_q$:
 \begin{gather*}
  \nabla_V(\nabla_YX + AY) =  \nabla_Y\nabla_VX + \nabla_{[V,Y]}X + R(V,Y)X + (\nabla_VA)Y + A(\nabla_VY) = \\
  -(\nabla_YA)V - A(\nabla_YV) + \nabla_{[V,Y]}X + R(V,Y)X + R(X,V)Y + A(\nabla_VY) = \\
  -(\nabla_YA)V + R(X,Y)V + A(\nabla_VY - \nabla_YV) + \nabla_{[V,Y]}X = (R(X,Y) - \nabla_YA)V\text{.}
 \end{gather*}
We used the first Bianchi identity and  $\nabla_VY - \nabla_YV = P[V,Y] = 0$. This last equality implies $[V,Y] = f\xi$ for a smooth function $f$, therefore $\nabla_{[V,Y]}X = f[\xi,X] = 0$.
\end{proof}

\begin{lemma}\label{LemEq2}
 \[
 \nabla_V\big(R(X,Y) - \nabla_YA\big) = - R(\nabla_YX +AY,V)
 \]
holds along $\gamma_q$.
\end{lemma}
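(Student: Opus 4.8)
The plan is to establish the identity by a direct computation along $\gamma_q$, mirroring the proof of Lemma~\ref{LemEq1}, by splitting the left-hand side as $\nabla_V\big(R(X,Y)\big) - \nabla_V\big(\nabla_Y A\big)$. For the first summand I would apply the Leibniz rule over the two curvature arguments, $\nabla_V\big(R(X,Y)\big) = (\nabla_V R)(X,Y) + R(\nabla_V X, Y) + R(X, \nabla_V Y)$, and substitute $\nabla_V X = -AV$ from (\ref{rowLemm}). For the second summand I would commute covariant derivatives through the curvature: since $R(V,Y) = \nabla_V\nabla_Y - \nabla_Y\nabla_V - \nabla_{[V,Y]}$ acts as a derivation on tensors (cf.\ (\ref{Rform})), one gets $\nabla_V\nabla_Y A = \nabla_Y\nabla_V A + \nabla_{[V,Y]}A + [R(V,Y),A]$; substituting $\nabla_V A = R(X,V)$ and expanding $\nabla_Y\big(R(X,V)\big)$ by Leibniz yields the remaining terms. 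As in Lemma~\ref{LemEq1}, the term $\nabla_{[V,Y]}A$ drops out because $[V,Y] = f\xi$ (a consequence of $P[V,Y]=0$) and $\nabla_\xi A = \mathcal{L}_\xi A = 0$.

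After collecting terms, the purely differential contribution is $(\nabla_V R)(X,Y) - (\nabla_Y R)(X,V)$, which by the antisymmetry of $R$ in its first two slots and the second Bianchi identity (Proposition~\ref{Bianchi}) rewrites as $-(\nabla_X R)(Y,V)$. The slot-mismatch terms $R(X,\nabla_V Y) - R(X,\nabla_Y V) = R(X, P[V,Y])$ vanish by torsion-freeness, since $\nabla_V Y - \nabla_Y V = P[V,Y] = 0$. Thus, to reach the target $-R(\nabla_Y X + AY, V)$, it remains to verify the algebraic identity $-(\nabla_X R)(Y,V) - R(AV,Y) - [R(V,Y),A] = -R(AY,V)$.

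The decisive input is the $\iso$-generator condition, which supplies $(\nabla_X R)(Y,V)$. Since the family $(X(t),A(t),c(t))$ stays in $\is(\gamma(t))$ along $\gamma_q$ --- the flow of $\xi$ is an isometry because $(M,H,g)$ is special, so the construction does not leave $\is$ --- we have $(\nabla_{X+c\xi}+A)R = 0$. Writing out this $\mathfrak{H}$-derivation on the $(1,3)$-tensor $R$, and using $R(\xi,\cdot)=0$ (Proposition~\ref{prop2}) together with $\nabla_\xi R = \mathcal{L}_\xi R = 0$ (valid because $\xi$ is Killing, cf.\ Proposition~\ref{LieR}, and $\nabla_\xi$ agrees with $\mathcal{L}_\xi$ on horizontal tensors), one extracts the endomorphism relation
\begin{align*}
 (\nabla_X R)(a,b) + [A,R(a,b)] - R(Aa,b) - R(a,Ab) = 0
\end{align*}
for all $a,b\in H_q$. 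Taking $a=Y$, $b=V$ and substituting, every curvature term cancels against $R(AV,Y)$, $R(AY,V)$ and $[R(V,Y),A]$ through repeated use of the antisymmetry of $R$, leaving exactly $-R(AY,V)$, as required. I expect the main obstacle to lie not in the Bianchi bookkeeping but in deriving this endomorphism form of the $\iso$-generator condition --- carefully tracking the derivation action of $A$ on each slot of $R$ and justifying $\nabla_\xi R = 0$.
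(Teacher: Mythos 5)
Your proposal is correct and takes essentially the same route as the paper: the paper's proof is a compressed version of exactly this computation --- the commutation formula $\nabla_V\nabla_Y = \nabla_Y\nabla_V + \nabla_{[V,Y]} + R(V,Y)$ acting as derivations, the Leibniz expansions, the second Bianchi identity, the vanishing of $\nabla_{[V,Y]}A$ via $[V,Y]=f\xi$ and $\nabla_{\xi}A = \mathcal{L}_{\xi}A = 0$, and the generator identity $\big((\nabla_{X}+A)R\big)(V,Y)=0$ --- with the algebra deferred to Nomizu. Your explicit derivation of the endomorphism form of the $\iso$-generator condition, and your justification that $\nabla_{\xi}R = \mathcal{L}_{\xi}R = 0$ (needed to pass from $(\nabla_{X+c\xi}+A)R=0$ to $(\nabla_{X}+A)R=0$), simply fill in steps the paper leaves implicit.
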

\begin{proof}
In the proof one uses the formula $\nabla_V\nabla_Y = \nabla_Y\nabla_V + \nabla_{[V,Y]} + R(V,Y)$ where each term is considered as a derivation. Similar computations as in \cite{Nom} give
\begin{gather*}
 \nabla_V\big(R(X,Y) - \nabla_YA\big) = \big((\nabla_X + A)R\big)(V,Y) - R(\nabla_YX + AY,V) - \nabla_{[V,Y]}A\text{.}
\end{gather*}
The first term on the right-hand side vanishes by definition of the sets $\is(q)$. Moreover, as above, $P[V,Y] = 0$ yields $[V,Y] = f\xi$ for a smooth function $f$, and consequently $\nabla_{[V,Y]}A = f\nabla_{\xi}A = f\mathcal{L}_{\xi}A = 0$ which ends the proof.
\end{proof}
Now we can deduce the first two equations in (\ref{EqsY}). We regard equations from Lemmas \ref{LemEq1}, \ref{LemEq2} as the system of linear ordinary differential equations
\begin{align}
 \begin{array}{l}
  \nabla_V(\nabla_YX + AY) = (R(X,Y) - \nabla_YA)V \\
  \nabla_V\big(R(X,Y) - \nabla_YA\big) = -R(\nabla_YX +AY,V)
 \end{array}
\end{align}
along $\gamma_q$ for the quantities $\nabla_YX + AY$ and $R(X,Y) - \nabla_YA$ with initial conditions 
\begin{align}\label{InitCond}
 \begin{array}{l}
  (\nabla_YX + AY)_{\gamma_q(0)} = 0 \\
  (R(X,Y) - \nabla_YA)_{\gamma_q(0)} = 0
 \end{array} \text{.}
\end{align}
Let us remark that since $Y(\gamma_q(0))$ is a scalar multiple of $\dot{\gamma}_p(0)$ for a certain $p\in U\backslash\delta$ (which is evident from the construction of coordinates $x^1,\dots,x^{2n+1}$ and the curves $\gamma_p$), (\ref{InitCond}) is indeed true and follows directly from the definition of the field $(X,A,c)$. Consequently, $\nabla_YX + AY$ and $R(X,Y) - \nabla_YA$ vanish along $\gamma_q$, in particular at $q$.



So as to prove the third equation in (\ref{EqsY}) we will first derive the following formula
\begin{align}\label{Bianchidalfa}
 (\nabla_Xd\alpha)(Y,Z) + (\nabla_Yd\alpha)(Z,X) + (\nabla_Zd\alpha)(X,Y) = 0
\end{align}
which holds for every $X,Y,Z \in \Sec(H)$. In fact, it is enough to write out the expression for $d^2\alpha = 0$, notice that e.g. $d\alpha(X,[Y,Z]) = d\alpha(X,P[Y,Z])$ etc., and use the fact that the horizontal torsion vanishes -- cf. Section \ref{sRCON}.

\begin{lemma}\label{LemEq3}
 \[
 \nabla_V\big(\nabla_Yc + d\alpha(X,Y)\big) = 0
 \]
holds along $\gamma_q$.
\end{lemma}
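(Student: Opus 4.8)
The plan is to show that the scalar $\Phi := \nabla_Y c + d\alpha(X,Y)$ satisfies $\nabla_V\Phi = 0$ along $\gamma_q$. Since $\Phi$ vanishes at $\gamma_q(0)$ (by the construction of the field $(X,A,c)$, just as for the initial conditions (\ref{InitCond})), this forces $\Phi\equiv 0$ along $\gamma_q$, which is precisely the third equation of (\ref{EqsY}) at $q$. As in Lemmas \ref{LemEq1} and \ref{LemEq2}, I shall freely differentiate the relations (\ref{rowLemm}) in the transverse direction $Y$, and I shall use that the field takes values in $\is$, so that the $i=0$ defining relation $(\nabla_{X+c\xi}+A)(d\alpha)=0$ holds at each point of $\gamma_q$.

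First I would expand $\nabla_V\Phi = \nabla_V(\nabla_Y c)+\nabla_V\big(d\alpha(X,Y)\big)$. For the scalar term, since the curvature acts trivially on functions the commutation rule gives $\nabla_V(\nabla_Y c) = \nabla_Y(\nabla_V c) + [V,Y](c)$; as $P[V,Y]=0$ we have $[V,Y]=f\xi$ for some function $f$, so this is $\nabla_Y(\nabla_V c) + f\,\xi(c)$. Substituting the third equation of (\ref{rowLemm}), $\nabla_V c = -d\alpha(X,V)$, and then the already-established identity $\nabla_Y X = -AY$ (which holds along $\gamma_q$ by Lemmas \ref{LemEq1} and \ref{LemEq2}), I obtain
\[
\nabla_V(\nabla_Y c) = -(\nabla_Y d\alpha)(X,V) + d\alpha(AY,V) - d\alpha(X,\nabla_Y V) + f\,\xi(c).
\]
For the second term I would use the Leibniz property of the metric connection together with the first equation of (\ref{rowLemm}), $\nabla_V X = -AV$, to write
\[
\nabla_V\big(d\alpha(X,Y)\big) = (\nabla_V d\alpha)(X,Y) - d\alpha(AV,Y) + d\alpha(X,\nabla_V Y).
\]

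Adding the two expressions, the pair $-d\alpha(X,\nabla_Y V)+d\alpha(X,\nabla_V Y) = d\alpha\big(X,P[V,Y]\big)$ vanishes because $P[V,Y]=0$. The two surviving covariant derivatives of $d\alpha$ are the crux: by themselves they do not cancel, and here I would invoke the cyclic identity (\ref{Bianchidalfa}) applied to $(V,X,Y)$, which collapses $(\nabla_V d\alpha)(X,Y)-(\nabla_Y d\alpha)(X,V)$ to the single term $-(\nabla_X d\alpha)(Y,V)$. At this point I would use the $i=0$ relation defining $\is$: since $\nabla_\xi d\alpha = \mathcal{L}_\xi d\alpha = 0$ by Proposition \ref{LieR}, the condition $(\nabla_{X+c\xi}+A)(d\alpha)=0$ reads $(\nabla_X d\alpha)(Y,V) = d\alpha(AY,V)+d\alpha(Y,AV)$. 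Substituting this, the curvature-type term cancels exactly against $d\alpha(AY,V)-d\alpha(AV,Y)$ using only the antisymmetry of $d\alpha$, leaving $\nabla_V\Phi = f\,\xi(c)$. Finally $\xi(c)=0$, proved just as the relations $[\xi,X]=0$ and $\mathcal{L}_\xi A=0$ established before these lemmas (the map $\varphi^s_*$ fixes the $c$-component), so $\nabla_V\Phi=0$.

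I expect the main obstacle to be the reconciliation of the two $\nabla d\alpha$ terms: a naive Leibniz expansion leaves them uncancelled, and it is only the combination of the Bianchi-type identity (\ref{Bianchidalfa}) with the defining $i=0$ equation of the $\iso$-generator that produces the exact cancellation against the algebraic $A$-terms. A secondary point needing care is the leftover $[V,Y]=f\xi$ contribution, disposed of by the $\xi$-invariance of the constructed field ($\xi(c)=0$ and $\nabla_\xi d\alpha=0$); note that, pleasantly, the skew-symmetry of $A$ is not even required in this computation.
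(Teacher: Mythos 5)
Your proof is correct and follows essentially the same route as the paper's: commuting $\nabla_V\nabla_Y c$ via $[V,Y]=f\xi$ with $\xi(c)=0$, substituting $\nabla_V c=-d\alpha(X,V)$ and $\nabla_YX=-AY$, cancelling the $P[V,Y]$ terms, and reducing via the cyclic identity (\ref{Bianchidalfa}) to $\bigl((\nabla_{X+c\xi}+A)d\alpha\bigr)(V,Y)=0$, the $i=0$ defining relation of $\is(q)$. The only cosmetic difference is that you expand the $A$-derivation into its algebraic terms and cancel by antisymmetry of $d\alpha$, where the paper keeps the expression packaged as $((\nabla_X+A)d\alpha)(V,Y)$.
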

\begin{proof}
 \begin{gather*}
  \nabla_V\big(\nabla_Yc + d\alpha(X,Y)\big) = \nabla_Y\nabla_Vc + \nabla_{[V,Y]}c + \nabla_V(d\alpha(X,Y)) = \\
  - \nabla_Y(d\alpha(X,V))+ \nabla_V(d\alpha(X,Y)) = (\nabla_Xd\alpha)(V,Y) - d\alpha(AV,Y) - d\alpha(V,AY) = \\
  ((\nabla_X + A)d\alpha)(V,Y) = ((\nabla_{X + c\xi} + A)d\alpha)(V,Y) = 0\text{,}
 \end{gather*}
 where the last term vanishes by definition of the sets $\is(q)$. We also used (\ref{Bianchidalfa}), the first equation in (\ref{EqsY}) and the fact that the derivation induced by $A$ is zero on functions. Moreover, as above, $[V,Y] = f\xi$ for a smooth function $f$ which gives $\nabla_{[V,Y]}c = f\xi(c)$ and which is zero by construction of $c$.
\end{proof}
Now, similarly as above, we deduce the last equation in (\ref{EqsY}). Namely, we view the expression from Lemma \ref{LemEq3} as the system of ordinary differential equation along $\gamma_q$ for the unknown quantity $\nabla_Yc + d\alpha(X,Y)$ and with initial condition $(\nabla_Yc + d\alpha(X,Y))_{\gamma_q(0)} = 0$ (this is indeed zero by the same reasoning as above).

The final step i proving Theorem \ref{th2} is the proposition below.
\begin{proposition}\label{FinalStep}
 Suppose that equations (\ref{EqsY}) are satisfied in $U$ for every $Y\in\Sec(U,H)$. Then $Z = X + c\xi$ is an infinitesimal isometry.
\end{proposition}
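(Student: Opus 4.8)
The plan is to reduce the claim to Proposition \ref{prop1}: it suffices to show that $Z = X + c\xi$ is a contact vector field and that the associated operator $A_Z = \mathcal{L}_Z - \nabla_Z$ on $\Sec(U,H)$ coincides with $A$, since $A$ is skew-symmetric at every point by construction (each triple $(X(q),A(q),c(q))$ lies in $\mathfrak{a}(q)$). Throughout I would use the facts already established for the field $(X,A,c)$, namely $[\xi,X]=0$, $\mathcal{L}_\xi A = 0$ and $\xi(c)=0$, together with the vanishing of both torsions of $\nabla$: horizontal torsion-freeness gives $P[X,Y] = \nabla_X Y - \nabla_Y X$, while vanishing vertical torsion gives $\nabla_\xi Y = [\xi,Y]$ for $Y\in\Sec(U,H)$.

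First I would verify the contact condition. For $Y\in\Sec(U,H)$ write $[Z,Y] = [X,Y] + c[\xi,Y] - Y(c)\xi$. Using torsion-freeness and $\alpha([X,Y]) = -d\alpha(X,Y)$ one has $[X,Y] = \nabla_X Y - \nabla_Y X - d\alpha(X,Y)\xi$, so that
\begin{align*}
 [Z,Y] = \nabla_X Y - \nabla_Y X + c[\xi,Y] - \big(d\alpha(X,Y) + Y(c)\big)\xi \text{.}
\end{align*}
The key point is that the third equation in (\ref{EqsY}), $\nabla_Y c = Y(c) = -d\alpha(X,Y)$, makes the coefficient of $\xi$ vanish identically. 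Since $\nabla_X Y$, $\nabla_Y X = -AY$ (first equation of (\ref{EqsY})) and $[\xi,Y]$ are all horizontal, it follows that $[Z,Y]\in\Sec(U,H)$, and by $C^\infty(U)$-linearity this extends to all horizontal fields. Hence $Z$ is contact.

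Next I would compute $A_Z$. From $\nabla_Z Y = \nabla_X Y + c\nabla_\xi Y = \nabla_X Y + c[\xi,Y]$ and the expression for $[Z,Y]$ above, the terms $\nabla_X Y$ and $c[\xi,Y]$ cancel, leaving $A_Z Y = [Z,Y] - \nabla_Z Y = -\nabla_Y X = AY$, where the last equality is again the first equation of (\ref{EqsY}). Thus $A_Z = A$, which is skew-symmetric with respect to $g$, and Proposition \ref{prop1} then yields that $Z$ is an infinitesimal isometry.

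The main (and essentially only) delicate point will be the bookkeeping along the Reeb direction: one must confirm that the spurious $\xi$-component produced by $[X,Y]$ is exactly cancelled by $-Y(c)\xi$, which is precisely the role of the third equation of (\ref{EqsY}), and that $\nabla_\xi = \mathcal{L}_\xi$ on horizontal objects so that the $c[\xi,Y]$ terms match in the computation of $A_Z$. It is worth remarking that the second equation $\nabla_Y A = R(X,Y)$ plays no role in this final implication; it was already consumed in guaranteeing that the prolonged triple remains in $\is(\gamma(t))$. A final routine verification is that the identities, stated for a single $Y$, are compatible with passing to a local horizontal frame.
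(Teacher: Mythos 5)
Your proof is correct, and all the steps check out against the machinery the paper has available: the decomposition $[Z,Y]=[X,Y]+c[\xi,Y]-Y(c)\xi$, the identity $[X,Y]=\nabla_XY-\nabla_YX-d\alpha(X,Y)\xi$ (torsion-freeness plus $\alpha([X,Y])=-d\alpha(X,Y)$), the cancellation of the $\xi$-component by the third equation of (\ref{EqsY}), and the computation $A_ZY=[Z,Y]-\nabla_ZY=-\nabla_YX=AY$ using $\nabla_\xi Y=[\xi,Y]$ (vanishing vertical torsion on a special manifold, with $[\xi,Y]$ horizontal since $\mathcal{L}_\xi\alpha=0$). Your route differs from the paper's in its organization rather than its substance: the paper verifies the two conditions of Proposition \ref{InfIsmCon} directly, establishing contactness via Proposition \ref{ContCond} and then grinding out $Z(g(W_1,W_2))=g([Z,W_1],W_2)+g(W_1,[Z,W_2])$ by hand from skew-symmetry of $A$, the first equation of (\ref{EqsY}), metric compatibility, and the fact that $\xi$ is an isometry; you instead prove contactness by showing $[Z,Y]$ is horizontal, identify $A_Z=A$ explicitly, and invoke Proposition \ref{prop1}. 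Since Proposition \ref{prop1} is itself the combination of Proposition \ref{InfIsmCon} with metric compatibility, the two arguments are equivalent when unwound, but yours is tighter and yields the structurally pleasing byproduct that the prolonged tensor field $A$ is precisely the operator $A_Z$ of the isometry it generates --- which is the natural converse of how the prolongation data were defined in the first place. Your closing observations are also accurate: the second equation of (\ref{EqsY}) is not used in this final step in the paper's proof either (it is consumed earlier in keeping the prolonged triple inside $\is(\gamma(t))$), and the facts $[\xi,X]=0$, $\mathcal{L}_\xi A=0$, $\xi(c)=0$, which you listed as inputs, turn out not to be needed once (\ref{EqsY}) is assumed for all horizontal $Y$.
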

\begin{proof}
 Fix $W_1,W_2\in\Sec(H)$. Since $A$ is skew-symmetric we have
\begin{gather*}
 0 = -g(AW_1,W_2) - g(W_1,AW_2) = g(\nabla_{W_1}X,W_2) + g(W_1,\nabla_{W_2}X) = \\
 g(\nabla_{X}W_1,W_2) + g(W_1,\nabla_XW_2) + g(P[W_1,X],W_2) + g(W_1,P[W_2,X]) = \\
 X(g(W_1,W_2)) - g(P[X,W_1],W_2) - g(W_1,P[X,W_2])\text{.}
\end{gather*}
Using this and remembering that $\xi$ is an infinitesimal isometry we further obtain
\begin{gather*}
 (X + c\xi)(g(W_1,W_2)) = g(P[X,W_1],W_2) + g(W_1,P[X,W_2]) + c\xi(g(W_1,W_2)) = \\
 g(P[X,W_1],W_2) + g(W_1,P[X,W_2]) + g(c[\xi,W_1],W_2) + g(W_1,c[\xi,W_2]) = \\
 g(P([X,W_1] + c[\xi,W_1]),W_2) + g(W_1,P([X,W_2] + c[\xi,W_2]))\text{.}
\end{gather*}
Next, using $[c\xi,W_i] = -W_i(c)\xi + c[\xi,W_i]$ we are led to
\begin{gather*}
 (X + c\xi)(g(W_1,W_2)) = g(P[X + c\xi,W_1],W_2) + g(W_1,P[X + c\xi,W_2]) \text{,}
\end{gather*}
i.e., 
\begin{gather*}
 Z(g(W_1,W_2)) = g(P[Z,W_1],W_2) + g(W_1,P[Z,W_2]) \text{.}
\end{gather*}
Finally, since $c = \alpha(Z)$, the last equation in (\ref{EqsY}) reads $Y(\alpha(Z)) = -d\alpha(X,Y) = -d\alpha(Z,Y)$ for every $Y\in\Sec(U,H)$ which, by Proposition \ref{ContCond}, shows that $Z$ is a contact vector field. Summing up all what we have said we arrive at
\begin{gather*}
 Z(g(W_1,W_2)) = g([Z,W_1],W_2) + g(W_1,[Z,W_2])
\end{gather*}
which ends the proof.
\end{proof}
In this way Theorem \ref{th2} is proven.

\subsection{Theorem \ref{th4} and Corollary \ref{corIntr}}
Theorem \ref{th4} is now clear just like Corollary \ref{corIntr}. We only mention here how Corollary \ref{corIntr} follows from the Riemannian case considered in \cite{Nom}.  

Let $(M,H,g)$ is a contact sub-Riemannian manifold, $\dim M = 2n+1$. As above we suppose it to be oriented as a contact manifold. As a result the Reeb vector field $\xi$ (which for $n$ even is defined up to sign) is globally defined. Now it is a standard thing, which is done in many articles, to extend the metric $g$ to the whole tangent bundle $TM$ in the following way. We define the Riemannian metric $\tilde{g}$ by formula: 
\begin{align*}
 \tilde{g}_{|H\times H} = g, \; \tilde{g}(\xi,\xi) = 1, \; \tilde{g}_{|H\times \Sp\{\xi\}} = 0 \text{.}
\end{align*}
Suppose that $Z$ is an infinitesimal isometry of $(M,H,g)$. Using Proposition \ref{InfIsmCon}, \ref{PropIsom} and the definition of $\tilde{g}$ it is easy to see that $Z$ is an infinitesimal isometry of the Riemannian manifold $(M,\tilde{g})$. Moreover it can be seen that an infinitesimal isometry of $(M,\tilde{g})$, which is a contact vector field, is an infinitesimal isometry of $(M,H,g)$.

Suppose now that $(M,H,g)$ is an oriented analytic simply connected sub-Riemannian manifold (here we do not have to assume that $(M,H,g)$ is special). Surely, $\tilde{g}$ is then also analytic. Let $q\in M$, $Z^*\in\is^*(q)$, and suppose that $Z$ is a representative of $Z^*$. Clearly, $Z$ is analytic by the above remark and by \cite{Palais}. Thus $Z$ is a local infinitesimal isometry of $(M,\tilde{g})$ and by \cite[Theorem 1]{Nom} it can be extended to the globally defined infinitesimal isometry of $(M,\tilde{g})$ also denoted by $Z$. Now, Proposition \ref{PropIsom} insures that $Z$ is an infinitesimal isometry of $(M,H,g)$.

\bigskip

\end{document}